\documentclass[12pt]{amsart}
\usepackage{amscd}
\usepackage{amsmath}
\usepackage{stmaryrd}
\usepackage{amssymb}
\usepackage{units}
\usepackage[all]{xy}
%
%
%

%
%
\def\frk{\frak}               

\def\mm{{\frk m}}

\def\Phi{{\frk n}}
\def\Phi{{\frk N}}
%
%

%
\def\opn#1#2{\def#1{\operatorname{#2}}} 
%
\opn\chara{char} \opn\length{\ell} \opn\pd{pd} \opn\rk{rk}
\opn\projdim{proj\,dim} \opn\injdim{inj\,dim} \opn\rank{rank}
\opn\depth{depth} \opn\sdepth{sdepth} \opn\fdepth{fdepth}
\opn\grade{grade} \opn\height{height} \opn\embdim{emb\,dim}
\opn\codim{codim}  \opn\min{min} \opn\max{max}

\opn\Tr{Tr} \opn\bigrank{big\,rank}
\opn\superheight{superheight}\opn\lcm{lcm}
\opn\trdeg{tr\,deg}
\opn\reg{reg} \opn\lreg{lreg} \opn\ini{in} \opn\lpd{lpd}
\opn\size{size}
%
\opn\div{div} \opn\Div{Div} \opn\cl{cl} \opn\Cl{Cl}
%
%
\opn\Spec{Spec} \opn\Supp{Supp} \opn\supp{supp} \opn\Sing{Sing}
\opn\Ass{Ass} \opn\Min{Min}
%
%
\opn\Ann{Ann} \opn\Rad{Rad} \opn\Soc{Soc}
%
%
\opn\Im{Im} \opn\Ker{Ker} \opn\Coker{Coker} \opn\Am{Am}
\opn\Hom{Hom} \opn\Tor{Tor} \opn\Ext{Ext} \opn\End{End}
\opn\Aut{Aut} \opn\id{id}  \opn\deg{deg}

\opn\nat{nat}
\opn\pff{pf}
\opn\Pf{Pf} \opn\GL{GL} \opn\SL{SL} \opn\mod{mod} \opn\ord{ord}
\opn\Gin{Gin} \opn\Hilb{Hilb}
%
%
\opn\aff{aff} \opn\con{conv} \opn\relint{relint} \opn\st{st}
\opn\lk{lk} \opn\cn{cn} \opn\core{core} \opn\vol{vol}
\opn\link{link} \opn\star{star}
\opn\gr{gr}

%
%

\def\pot#1#2{#1[\kern-0.28ex[#2]\kern-0.28ex]}

\newcommand{\fracs}[2]{\displaystyle\frac{#1}{#2}}

\newcommand{\Sum}[2]{\displaystyle\sum_{#1}^{#2}}

%
%
\opn\dirlim{\underrightarrow{\lim}}
\opn\inivlim{\underleftarrow{\lim}}
%
%
%

%
%
\let\to=\rightarrow

\def\Implies{\ifmmode\Longrightarrow \else
        \unskip${}\Longrightarrow{}$\ignorespaces\fi}
\def\implies{\ifmmode\Rightarrow \else
        \unskip${}\Rightarrow{}$\ignorespaces\fi}
\def\iff{\ifmmode\Longleftrightarrow \else
        \unskip${}\Longleftrightarrow{}$\ignorespaces\fi}

\let\:=\colon
\newtheorem{Theorem}{Theorem}[]
\newtheorem{Lemma}[Theorem]{Lemma}
\newtheorem{Corollary}[Theorem]{Corollary}
\newtheorem{Proposition}[Theorem]{Proposition}
\theoremstyle{definition}
\newtheorem{Remark}[Theorem]{Remark}

\newtheorem{Definition}[Theorem]{Definition}

\newtheorem{Conjecture}[Theorem]{Conjecture}
\newtheorem{Question}[Theorem]{Question}

%
\let\epsilon\varepsilon
\let\phi=\varphi
\let\kappa=\varkappa
%
%
\textwidth=15cm \textheight=22cm \topmargin=0.5cm
\oddsidemargin=0.5cm \evensidemargin=0.5cm \pagestyle{plain}
%
%
\def\qed{\ifhmode\textqed\fi
      \ifmmode\ifinner\quad\qedsymbol\else\dispqed\fi\fi}
\def\textqed{\unskip\nobreak\penalty50
       \hskip2em\hbox{}\nobreak\hfil\qedsymbol
       \parfillskip=0pt \finalhyphendemerits=0}
\def\dispqed{\rlap{\qquad\qedsymbol}}

%
\opn\dis{dis}
\def\pnt{{\raise0.5mm\hbox{\large\bf.}}}

\opn\Lex{Lex}



\begin{document}

\title{\bf Artin approximation property and the General Neron Desingularization}
\author{  Dorin Popescu }
\thanks{ We gratefully acknowledge the support from the project  ID-PCE-2011-1023, granted by the Romanian National Authority for Scientific Research, CNCS - UEFISCDI}

\address{Dorin Popescu, Simion Stoilow Institute of Mathematics of the Romanian Academy, Research unit 5,
University of Bucharest, P.O.Box 1-764, Bucharest 014700, Romania}
\email{dorin.popescu@imar.ro}

\maketitle

\begin{abstract} This is an exposition on the General Neron Desingularization and its applications. We end with a recent constructive form of this desingularization  in  dimension one.\\
 \noindent
  {\it Key words } : Artin approximation, Neron Desingularization, Bass-Quillen Conjecture, Quillen's Question, smooth morphisms,  regular morphisms, smoothing ring morphisms.\\
 {\it 2010 Mathematics Subject Classification: Primary 1302, Secondary 13B40, 13H05, 13H10, 13J05, 13J10, 13J15, 14B07, 14B12, 14B25.}
\end{abstract}

\section*{Introduction}

Let $K$ be a field and $R=K\langle x\rangle$, $x=(x_1,\ldots, x_m)$ be the ring of algebraic power series in $x$ over $K$, that is the algebraic closure  of the polynomial ring $K[x]$ in the formal power series ring ${\hat R}=K[[x]]$. Let $f=(f_1,\ldots,f_q) $ in $Y=(Y_1,\ldots,Y_n)$ over $R$ and $\hat y$ be a solution of $f$ in the completion ${\hat R}$ of $R$.
\begin{Theorem}[M. Artin \cite{A1}]\label{ar}  For any $c\in {\bf N}$ there exists a solution $y^{(c)}$ in $R$ such that $y^{(c)}\equiv {\hat y}$  mod $(x)^c$.
\end{Theorem}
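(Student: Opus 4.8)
\medskip
\noindent\textbf{Proof strategy.} The plan is to deduce Theorem~\ref{ar} from the General Néron Desingularization, the central result around which this exposition is organized. First I would record the facts about $R=K\langle x\rangle$ that make the machinery applicable. As the algebraic closure of $K[x]$ in $K[[x]]$, the ring $R$ is the henselization of the local ring $K[x]_{(x)}$; hence $R$ is a Noetherian Henselian local ring with maximal ideal $(x)$, its $(x)$-adic completion is ${\hat R}=K[[x]]$, and it is excellent (the henselization of an excellent local ring is excellent). Consequently the completion map $R\To{\hat R}$ is a regular morphism, excellence meaning precisely that the formal fibres are geometrically regular; and, as for any Noetherian local ring, ${\hat R}/(x)^c{\hat R}\iso R/(x)^c$ for every $c$.

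Next I would bring in the given solution. The element ${\hat y}\in{\hat R}^n$ with $f({\hat y})=0$ determines an $R$-algebra homomorphism $\phi\colon B:=R[Y]/(f)\To{\hat R}$ sending $Y$ to ${\hat y}$. Applying the General Néron Desingularization to the regular morphism $R\To{\hat R}$, the ring ${\hat R}$ is a filtered inductive limit of smooth $R$-algebras of finite type; since $B$ is of finite presentation over $R$, the map $\phi$ factors through one of them, i.e. as a composite $B\To A\To{\hat R}$ of $R$-algebra homomorphisms with $A$ smooth over $R$. Write $\psi\colon B\To A$ for the first map.

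Then comes the only use of the Henselian property. Composing $A\To{\hat R}$ with the reduction ${\hat R}\To{\hat R}/(x)^c{\hat R}=R/(x)^c$ gives an $R$-algebra homomorphism ${\bar\theta}\colon A\To R/(x)^c$, and the claim is that ${\bar\theta}$ lifts to an $R$-algebra homomorphism $\theta\colon A\To R$ with $\theta\equiv{\bar\theta}\pmod{(x)^c}$. By the local structure of smooth morphisms we may assume (replacing $A$ by a suitable localization through which ${\bar\theta}$ still factors) that $A=R[Z_1,\dots,Z_N]/(g_1,\dots,g_s)$ is standard smooth over $R$, with $\delta:=\det(\partial g_i/\partial Z_j)_{1\le i,j\le s}$ invertible in $A$. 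Lift the point of $A$ determined by ${\bar\theta}$ componentwise to $z^{(0)}\in R^N$; then $g_i(z^{(0)})\in(x)^c$ and $\delta(z^{(0)})$ is a unit of $R$. Holding $Z_{s+1},\dots,Z_N$ fixed, the system $g_1=\dots=g_s=0$ in the variables $Z_1,\dots,Z_s$ has the approximate solution $z^{(0)}$ with invertible Jacobian, so, $R$ being Henselian, it has an exact solution $z\in R^N$ congruent to $z^{(0)}$ modulo $(x)^c$ (the Newton correction already lies in $(x)^c$ because $\delta(z^{(0)})$ is a unit and $g_i(z^{(0)})\in(x)^c$). This $z$ defines the required $\theta$.

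Finally set $y^{(c)}:=(\theta\circ\psi)(Y)\in R^n$. Since $\theta\circ\psi\colon R[Y]/(f)\To R$ is an $R$-algebra homomorphism, $f(y^{(c)})=0$; and modulo $(x)^c$ one has $(\theta\circ\psi)(Y)\equiv({\bar\theta}\circ\psi)(Y)$, which is the image of $Y$ under $B\To A\To{\hat R}\To R/(x)^c$, namely $\phi(Y)={\hat y}$ read modulo $(x)^c$. Hence $y^{(c)}\equiv{\hat y}\pmod{(x)^c}$, as required. The one genuinely hard input is the General Néron Desingularization invoked in the second step, together with the excellence of $R$ needed there to know that $R\To{\hat R}$ is regular; the Hensel lifting of the third step and the bookkeeping of the last are routine. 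It is worth noting that Artin's own proof runs quite differently, by induction on the number $m$ of variables, using the Weierstrass preparation theorem to decrease $m$ at each stage; that argument needs no desingularization theory, but it is less transparent and does not isolate the role of the regularity of $R\To{\hat R}$.
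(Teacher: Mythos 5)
Your proposal is correct and follows essentially the same route the paper takes: Theorem~\ref{ar} is treated there as the special case $A=K\langle x\rangle$ of Theorem~\ref{main}, and the derivation of Theorem~\ref{main} from Theorem~\ref{gnd} at the start of Section~1 is exactly your argument (factor $B\to\hat A$ through a smooth algebra, localize to the standard smooth situation with invertible Jacobian minor, then lift by the Implicit Function Theorem in the Henselian ring). Your write-up is in fact slightly more careful than the paper's sketch, which only records the congruence modulo $\mm\hat A$ rather than modulo $\mm^c$; your observation that the Newton correction lies in $(x)^c$ because the Jacobian determinant is a unit supplies the missing bookkeeping.
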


In general we say that a local ring $(A,\mm)$ has the {\em  Artin approximation property} if for every system of polynomials $f=(f_1,\ldots,f_q)\in A[Y]^q $, $Y=(Y_1,\ldots,Y_n)$,  a solution $\hat y$ of $f$ in the completion ${\hat A}$ and $c\in {\bf N}$ there exists a solution $y^{(c)}$ in $A$ of $f$ such that $y^{(c)}\equiv {\hat y}$  mod $\mm^c$. In fact $A$
  has the Artin approximation property if every finite system of polynomial equations over $A$ has a solution in $A$ if and only if it has a solution in the completion $\hat A$ of $A$.
 We should mention that M. Artin proved already  in \cite{A} that the ring of  convergent power series with coefficients in $\bf C$ has the Artin approximation property as it was later called.

A ring morphism $u:A\to A'$ of Noetherian rings has  {\em regular fibers} if for all prime ideals $P\in \Spec A$ the ring $\nicefrac{A'}{PA'}$ is a regular  ring, i.e. its localizations are regular local rings.
It has {\em geometrically regular fibers}  if for all prime ideals $P\in \Spec A$ and all finite field extensions $K$ of the fraction field of $\nicefrac{A}{P}$ the ring  $K\otimes_{A/P} \nicefrac{A'}{PA'}$ is regular.

A flat morphism of Noetherian rings $u$ is {\em regular} if its fibers are geometrically regular. If $u$ is regular of finite type then $u$ is called {\em smooth}. A localization of a smooth algebra is called {\em essentially smooth}.

 A Henselian Noetherian local ring $A$ is {\em excellent} if the completion map $A\to \hat A$ is regular. For example, a Henselian discrete valuation ring $V$
is excellent if the completion map $V\to \hat V$ induces a separable fraction field extension.

\begin{Theorem} [M. Artin \cite{A1}]\label{a1} Let $V$ be an excellent Henselian discrete valuation ring and $V\langle x\rangle$ the ring of algebraic power series  in $x$ over $V$,  that is the algebraic closure  of the polynomial ring $V[x]$ in the formal power series ring $V[[x]]$. Then $V\langle x\rangle$ has the Artin approximation property.
\end{Theorem}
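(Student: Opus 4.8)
The plan is to deduce the Artin approximation property for $A:=V\langle x\rangle$ from the General N\'eron Desingularization, which reduces the whole question to two purely ring‑theoretic facts about $A$: that it is a Henselian Noetherian local ring, and that its completion map $A\to\hat A$ is regular. Write $\mm=(\pi,x_1,\dots,x_m)$ for the maximal ideal of $A$, $\pi$ a uniformizer of $V$. First I would identify $A$ structurally: since $V$ is Henselian, the algebraic closure of $V[x]$ inside $V[[x]]$ is the henselization of the local ring $V[x]_{(\mm_V,x)}$. In particular $A$ is a Henselian Noetherian local ring; moreover henselization does not change the completion, and the $\mm$‑adic completion of $V[x]_{(\mm_V,x)}$ is $\hat V[[x]]$, so $\hat A=\hat V[[x]]$ and the ``solution in the completion'' of the definition lives in $\hat V[[x]]^n$.

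The main obstacle is to verify that $A\to\hat A$ is a regular morphism, i.e.\ that $A$ is excellent. Here one uses that $V$ is excellent by hypothesis (the completion map $V\to\hat V$ is regular), whence the finitely generated $V$‑algebra $V[x]$ is excellent, whence so is its localization $V[x]_{(\mm_V,x)}$; it then remains to invoke the nontrivial theorem that the henselization of an excellent local ring is again excellent. Thus $A$ is a Henselian excellent local ring, so $A\to\hat A$ is regular. Essentially all of the weight of the statement is concentrated in this step.

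Finally, given $f=(f_1,\dots,f_q)\in A[Y]^q$, a solution $\hat y\in\hat A^n$, and $c\in\NN$: by the General N\'eron Desingularization applied to the regular morphism $A\to\hat A$, the ring $\hat A$ is a filtered union of smooth $A$‑subalgebras. Since $A[Y]/(f)$ is finitely presented over $A$, the $A$‑algebra homomorphism $A[Y]/(f)\to\hat A$ sending $Y\mapsto\hat y$ factors as $A[Y]/(f)\to B\xrightarrow{\psi}\hat A$ with $B$ smooth over $A$. The composite $B\xrightarrow{\psi}\hat A\to\hat A/\mm^c\hat A=A/\mm^c$ is an $A/\mm^c$‑point of $B$, and by the standard lifting property of smooth algebras over a Henselian local ring it lifts to an $A$‑algebra homomorphism $\phi\colon B\to A$ with $\phi\equiv\psi\pmod{\mm^c}$. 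Composing $A[Y]/(f)\to B\xrightarrow{\phi}A$ then produces $y^{(c)}\in A^n$ with $f(y^{(c)})=0$ and $y^{(c)}\equiv\hat y\pmod{\mm^c}$, which is exactly the required approximation. (One could instead avoid this forward appeal to the General N\'eron Desingularization and run Artin's original argument, treating the ramification in the DVR direction by N\'eron's $p$‑desingularization and the $x$‑directions by a Weierstrass/Tougeron‑type implicit function theorem, at the cost of a considerably longer proof.)
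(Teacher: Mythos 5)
Your argument is correct, but it is not the proof the paper has in mind for this statement. The paper presents Theorem \ref{a1} as Artin's original 1969 result, proved via the classical N\'eron desingularization for discrete valuation rings (writing an unramified extension of valuation rings as a filtered union of essentially smooth subalgebras) combined with Weierstrass-type arguments in the variables $x$. You instead derive it as a special case of Theorem \ref{main} (equivalently, of the General N\'eron Desingularization, Theorem \ref{gnd}): you identify $V\langle x\rangle$ with the henselization of $V[x]_{(\mm_V,x)}$, verify excellence (finite type and localization preserve excellence, and the henselization of an excellent local ring is excellent --- a genuinely nontrivial input you are right to flag), and then run essentially the reduction the paper itself carries out at the start of Section 1 when deducing Theorem \ref{main} from Theorem \ref{gnd}. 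This is logically sound and not circular, since the modern proofs of Theorem \ref{gnd} do not rely on Theorem \ref{a1}; it is, however, anachronistic, and it concentrates real content in the two structural facts about $V\langle x\rangle$ (the identification with the henselization, and hence Noetherianity and Henselianity, is itself part of Artin's paper and uses normality of $V[x]$). One point where your write-up is actually sharper than the paper's sketch of the analogous reduction: the paper's Implicit Function Theorem step only yields $y\equiv\hat y$ modulo $\mm\hat A$, whereas your appeal to the lifting property of smooth algebras over a Henselian local ring modulo $\mm^c$ gives the required congruence modulo $\mm^c$ directly, avoiding the standard auxiliary trick of adjoining equations $Y=\tilde y+\sum_j m_jZ_j$ with the $m_j$ generating $\mm^c$.
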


 The proof used the so called the N\'eron Desingularization, which says  that  an unramified extension $V\subset V'$ of valuation rings inducing separable field extensions on the fraction and residue fields, is a filtered inductive union of essentially finite type subextensions $V\subset A$, which are regular local rings, even essentially smooth $V$-subalgebras of $V'$.

N\'eron Desingularization is extended by the following theorem.

\begin{Theorem}[General Neron Desingularization, Popescu \cite{P'}, \cite{P}, \cite{P1}, Andr\'e \cite{An}, Teissier \cite{T},  Swan \cite{S}, Spivakovski \cite{Sp}]\label{gnd}  Let $u:A\to A'$ be a  regular morphism of Noetherian rings and $B$ an $A$-algebra of finite type. Then  any $A$-morphism $v:B\to A'$   factors through a smooth $A$-algebra $C$, that is $v$ is a composite $A$-morphism $B\to C\to A'$.
\end{Theorem}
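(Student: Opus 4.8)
The plan is to prove the General N\'eron Desingularization by reduction to a standard smoothing criterion for ring maps, then to dispose of that criterion by an induction on the size of the singular locus.

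\medskip

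\noindent\textbf{Setting up the reduction.} First I would replace $B$ by a concrete presentation: write $B = A[Y]/I$ with $Y=(Y_1,\dots,Y_n)$ and $I=(f_1,\dots,f_r)$, so that $v\colon B\to A'$ corresponds to a solution of the $f_j$ in $A'$. The goal becomes: factor $v$ through a \emph{smooth} $A$-algebra $C$. The key local notion is the \emph{Jacobian ideal}: for a set of $r$ of the $f_j$'s with $r\le n$, one gets an $r\times r$ minor $\Delta$ of the Jacobian matrix $(\partial f_j/\partial Y_k)$, and the ideal $H_{B/A}$ generated (roughly) by such $\Delta$ together with $(f):(I)$ cuts out exactly the non-smooth locus of $B$ over $A$ (this is the Jacobian criterion for smoothness). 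So $v$ factors through a smooth $A$-algebra if and only if $v(H_{B/A})A' = A'$, i.e.\ the pulled-back non-smooth locus is empty in $\Spec A'$. The main work is to enlarge $B$ inside a chain of finite-type $A$-algebras mapping to $A'$ so that this Jacobian ideal becomes the unit ideal after applying $v$.

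\medskip

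\noindent\textbf{The induction.} I would argue by induction on the (finite) length/number of minimal primes of $B/A$-singular locus contracted to $A'$, or on the height of a suitable defining ideal. Because $u\colon A\to A'$ is regular, its fibers are geometrically regular, and this is exactly what lets one produce the needed "desingularizing" elements: given the image in $A'$ of the non-smooth locus of $B$, regularity of $u$ (via flatness plus geometric regularity of fibers, and e.g.\ a lifting argument using the N\'eron-type construction of essentially smooth subalgebras in the discrete valuation / dimension-one strata) allows one to find a new finite-type $A$-algebra $D$ with a map $B\to D\to A'$ such that $H_{D/A}A'$ strictly contains $v(H_{B/A})A'$. The heart of the matter is the following step, which one proves first as a lemma: if $\mathfrak p\in\Spec A$ and $A_\mathfrak p\to A'_\mathfrak p$ behaves like (a localization of) the completion of a regular situation, then any singularity of $B$ along $\mathfrak p$ can be "resolved" by adjoining finitely many new variables and relations encoding a formal smoothness/lifting property over the geometrically regular fiber. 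Concretely one uses that over a field extension of $\kappa(\mathfrak p)$ the fiber ring is regular, hence (after a finite extension) smooth, and transports this smoothness back to a finite-type $A$-algebra by a standard algebraization/Artin-type argument.

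\medskip

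\noindent\textbf{Where the difficulty lies.} The routine parts are the Jacobian criterion bookkeeping and checking that the constructed $C$ is genuinely of finite type and smooth over $A$. The real obstacle — and the technical core of Popescu's theorem — is the inductive step producing the algebra $D$ with strictly larger Jacobian ideal: one must handle the case where the singular locus has components of various dimensions, and the delicate case is essentially that of a discrete valuation ring (dimension one), where the classical N\'eron Desingularization is invoked, together with a careful dévissage (cutting by generic hyperplanes / prime avoidance, and passing to suitable minimal primes) to reduce the general Noetherian situation to that one-dimensional regular case. Managing the interaction of flatness, the behaviour of the Jacobian ideal under the base changes involved, and ensuring the process terminates (the ideals form a strictly increasing chain, so Noetherianity of $A'$ forces termination) is the part that requires genuine care rather than calculation.
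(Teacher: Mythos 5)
Your outline correctly identifies the standard skeleton of the known proofs: present $B=A[Y]/I$, introduce Elkik's ideal $H_{B/A}$ cutting out the non-smooth locus, and run a Noetherian induction on the ideal $\sqrt{v(H_{B/A})A'}$ of $A'$, the point being to replace $B$ by finite type $A$-algebras $D$ with $B\to D\to A'$ and strictly larger singular ideal until it becomes the unit ideal (at which stage a gluing argument produces the smooth $C$). Note that the paper itself does not prove Theorem~\ref{gnd}; it only sketches, at the end of Section~3, a constructive version of the inductive step for one-dimensional local domains, where the element $d\in H_{B/A}\cap A$ and the explicit polynomials $h$ and $g$ are produced. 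One small but real error in your setup: the equivalence ``$v$ factors through a smooth $A$-algebra if and only if $v(H_{B/A})A'=A'$'' is false in the ``only if'' direction --- the theorem asserts that $v$ always factors through a smooth algebra, while $v(H_{B/A})A'$ is typically proper; only the ``if'' direction is true (and is the easy termination step of the induction).

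The genuine gap is that the inductive step --- constructing $D$ with $\sqrt{v(H_{B/A})A'}\subsetneq\sqrt{w(H_{D/A})A'}$ --- is precisely the mathematical content of the theorem, and your proposal does not prove it; it only names it. Worse, the two mechanisms you gesture at for carrying it out are both problematic. First, invoking ``a standard algebraization/Artin-type argument'' is circular in this generality: the Artin approximation property for general excellent Henselian rings (Theorem~\ref{main}) is itself a consequence of Theorem~\ref{gnd}, so it cannot be an ingredient of its proof. Second, the proposed d\'evissage to the discrete valuation ring case and an appeal to classical N\'eron desingularization is not how any of the published proofs proceed and there is no indication such a reduction exists; the actual argument localizes $A'$ at a prime minimal over $v(H_{B/A})A'$, exploits geometric regularity of the fiber over the contracted prime of $A$ via a delicate ``lifting lemma'' (involving the conormal module of $I$, standard smooth presentations, and either Andr\'e homology or Swan's elementary substitute), and works in arbitrary dimension directly. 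Without a proof of that lemma your argument establishes nothing beyond the easy case $v(H_{B/A})A'=A'$.
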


The smooth $A$-algebra $C$ given  for $B$ by the above theorem is called  a {\em General Neron Desingularization}. Note that $C$ is not uniquely associated to $B$ and so we better speak about a General Neron Desingularization.

The above theorem gives a positive answer to a conjecture of  M. Artin  \cite{A2}.

\begin{Theorem}[\cite{P}, \cite{P2}] \label{main} An excellent Henselian local ring has the   Artin approximation property.
\end{Theorem}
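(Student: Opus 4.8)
The plan is to convert the given (possibly singular) system of polynomial equations over $A$ into a \emph{smooth} one by means of the General Neron Desingularization, and then to solve the smooth system inside $A$ using Hensel's Lemma, which is at our disposal since $A$ is Henselian. So let $(A,\mm)$ be an excellent Henselian local ring, let $f=(f_1,\dots,f_q)\in A[Y]^q$ with $Y=(Y_1,\dots,Y_n)$, let $\hat y\in\hat A^n$ be a solution of $f$, and let $c\in\NN$. Put $B=A[Y]/(f)$, a finite type $A$-algebra; then $\hat y$ determines an $A$-morphism $v\colon B\to\hat A$. Since $A$ is excellent, the completion map $A\to\hat A$ is regular, so Theorem \ref{gnd} gives a factorization $B\xrightarrow{\pi}C\xrightarrow{w}\hat A$ with $C$ a smooth $A$-algebra.

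Next I would invoke the local structure theorem for smooth morphisms at the relevant point. Let $\qq=w^{-1}(\hat\mm)\in\Spec C$. There exists $g\in C\setminus\qq$ such that $C_g$ is \emph{standard smooth} over $A$, say $C_g=A[Z,T]/(h_1,\dots,h_M)$ with $Z=(Z_1,\dots,Z_N)$, $T=(T_1,\dots,T_M)$, and $\Delta:=\det(\partial h_i/\partial T_j)$ a unit of $C_g$. Since $g\notin\qq$, the morphism $w$ extends to $w\colon C_g\to\hat A$, that is, to a point $(\hat z,\hat t)\in\hat A^{N+M}$ with $h_i(\hat z,\hat t)=0$ for all $i$ and $\Delta(\hat z,\hat t)$ a unit of $\hat A$.

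Now comes the approximation step. Because $A\to\hat A$ is flat and $A/\mm^c\to\hat A/\mm^c\hat A$ is an isomorphism, I can choose $z\in A^N$ and $\bar t\in A^M$ with $z\equiv\hat z$ and $\bar t\equiv\hat t$ modulo $\mm^c\hat A$. Then $h_i(z,\bar t)\in\mm^c\hat A\cap A=\mm^cA$ for all $i$, and $\Delta(z,\bar t)\equiv\Delta(\hat z,\hat t)\pmod{\mm\hat A}$ is a unit, so $\Delta(z,\bar t)$ is a unit of the local ring $A$. Hence $h_i(z,\bar t)\in\mm^c\cdot\Delta(z,\bar t)^2$, and the several--variable Hensel Lemma for the Henselian pair $(A,\mm^c)$ yields $t\in A^M$ with $h_i(z,t)=0$ for all $i$ and $t\equiv\bar t\pmod{\mm^c}$. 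The pair $(z,t)$ defines an $A$-morphism $C_g\to A$; composing $B\xrightarrow{\pi}C\to C_g\to A$ and taking the image of $Y$ produces $y^{(c)}\in A^n$. Since $f\equiv0$ in $B$ we get $f(y^{(c)})=0$, and since $C_g\to A$ and $w\colon C_g\to\hat A$ agree modulo $\mm^c$ by construction, the image of $Y$ in $\hat A$ along $B\to C\to C_g\xrightarrow{w}\hat A$, namely $\hat y$, is congruent to $y^{(c)}$ modulo $\mm^c$. Thus $y^{(c)}\equiv\hat y\pmod{\mm^c}$, as required; the equivalent formulation (``a finite system of polynomial equations over $A$ is solvable in $A$ if and only if it is solvable in $\hat A$'') from the Introduction then follows formally.

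The only deep input here is Theorem \ref{gnd} itself, which we are assuming; given it, the argument is essentially bookkeeping. The points that require care are: choosing the standard smooth localization $C_g$ \emph{around} the prime $\qq$ through which $w$ factors (so that $w$, and with it the approximate solution, really live on $C_g$), and using the refined form of Hensel's Lemma with the ideal $\mm^c$ rather than just $\mm$, which is exactly what delivers the congruence modulo $\mm^c$ demanded in the statement rather than merely modulo $\mm$.
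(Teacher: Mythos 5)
Your argument is correct and is essentially the paper's own derivation of Theorem \ref{main} from Theorem \ref{gnd}: factor $v$ through a smooth $A$-algebra, pass to a standard smooth localization around the prime lying under $\mm\hat A$, and conclude by the Implicit Function Theorem in the Henselian ring $A$. You are in fact slightly more careful than the paper's sketch (which only records the congruence modulo $\mm\hat A$) in first approximating the coordinates modulo $\mm^c$ and then applying Hensel's Lemma for the pair $(A,\mm^c)$ to obtain the required congruence $y^{(c)}\equiv\hat y$ modulo $\mm^c$.
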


This paper is a survey on the Artin approximation property, the General Neron Desingularization and their applications.  It relies mainly on some lectures  given by us within the special semester on Artin Approximation of the Chaire Jean Morlet at CIRM, Luminy, Spring 2015 (see http://hlombardi.free.fr/Popescu-Luminy2015.pdf).

\vskip 0.5 cm

\section{Artin approximation properties}
\vskip 0.5 cm

First we show how one recovers Theorem \ref{main} from Theorem \ref{gnd}. Indeed, let  $f$ be a finite system
of polynomial equations over $A$ in $Y=(Y_1,\ldots,Y_n)$ and $\hat y$ a solution of $f$ in $\hat A$. Set $B=A[Y]/(f)$ and let $v:B\to \hat A$ be the morphism given by $ Y \to \hat y$.
By Theorem \ref{gnd}, $v$  factors through a smooth $A$-algebra $C$, that is $v$ is a composite $A$-morphism $B\to C\to A'$. Thus changing $B$ by $C$ we may reduce the problem to the case when $B$ is smooth over $A$. Since $A'$ is local, changing $B$  by $B_b$ for some $b\in B\setminus v^{-1}(\mm A')$  we may assume that $1\in \big((g):I\big)MB$ for some polynomials $g=(g_1,\ldots,g_r)$ from $(f)$  and  a $r\times r$-minor $M$ of the Jacobian matrix $\left(\fracs{\partial g}{\partial Y}\right)$. Thus $g(\hat y)=0$ and $M(\hat y)$ is invertible. By the Implicit Function Theorem there exists $y\in A$ such that $y\equiv \hat y\ \mbox{modulo}\ \mm\hat A$.

The  following consequence of Theorem \ref{gnd} was noticed and hinted by N. Radu  to M. Andr\'e. This was the origin of Andr\'e's interest to read our theorem and to write later \cite{An}.

\begin{Corollary}   Let $u:A\to A'$ be a  regular morphism of Noetherian rings. Then the differential module $\Omega_{A'/A}$ is flat.
\end{Corollary}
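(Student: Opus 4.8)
The plan is to reduce the statement to the finite-type case and then use General Néron Desingularization. First I would recall that $\Omega_{A'/A}$ is the filtered colimit of the modules $\Omega_{B/A}\otimes_B A'$ as $B$ ranges over the finitely generated $A$-subalgebras of $A'$ (equivalently, over finite-type $A$-algebras $B$ equipped with an $A$-morphism $v\colon B\to A'$), since $A'$ is the filtered union of such $B$ and Kähler differentials commute with filtered colimits. Since a filtered colimit of flat $A$-modules is flat, it suffices to show that each $\Omega_{B/A}\otimes_B A'$ is a flat $A$-module.

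Now fix such a $B$ and the structural $A$-morphism $v\colon B\to A'$. By Theorem \ref{gnd}, $v$ factors as $B\to C\to A'$ with $C$ a smooth $A$-algebra. Using the second exact sequence for the differentials of the composite $A\to B\to C$, or more directly the fact that smoothness of $C$ over $A$ makes $\Omega_{C/A}$ a projective (hence flat) $C$-module of finite rank, I would deduce that $\Omega_{C/A}\otimes_C A'$ is a flat $A'$-module, hence flat over $A$ via $u$ (base change of a flat module along $u$ is flat; and a flat $A'$-module is flat over $A$ because flatness is transitive along $A\to A'$). The key point is then to compare $\Omega_{B/A}\otimes_B A'$ with $\Omega_{C/A}\otimes_C A'$: the map $B\to C$ induces a surjection $\Omega_{B/A}\otimes_B C\to \Omega_{C/A}$ is \emph{not} what we want directly — rather, one should argue that after tensoring up to $A'$ these become isomorphic, because the factorization $B\to C\to A'$ together with smoothness of $C$ allows one to split off the contribution of $\Omega_{C/B}$. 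Concretely, the conormal-type sequence for $B\to C$ reads $\Omega_{B/A}\otimes_B C\to \Omega_{C/A}\to \Omega_{C/B}\to 0$, and since $C\to A'$ lifts infinitesimally (smoothness), tensoring with $A'$ and chasing shows $\Omega_{B/A}\otimes_B A'$ surjects onto $\Omega_{C/A}\otimes_C A'$; combined with the dual lifting one gets that $\Omega_{B/A}\otimes_B A'$ is a direct summand of the flat module $\Omega_{C/A}\otimes_C A'$, hence flat.

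The main obstacle is exactly this last comparison: one must be careful that $\Omega_{B/A}\otimes_B A'$, which need not be finitely presented over $A'$ in any useful way and certainly need not be flat before passing through $C$, becomes a direct summand of $\Omega_{C/A}\otimes_C A'$ after the base change to $A'$. The cleanest route is to observe that the \emph{a priori} surjection we actually have goes $\Omega_{B/A}\otimes_B A' \twoheadrightarrow \Omega_{A'/A}$ factoring through $\Omega_{C/A}\otimes_C A' \twoheadrightarrow \Omega_{A'/A}$, and that in the colimit all these maps become the \emph{same} map onto $\Omega_{A'/A}$; so $\Omega_{A'/A}$ is a filtered colimit of the flat modules $\Omega_{C/A}\otimes_C A'$ (one such $C$ for each $B$), and flatness passes to the colimit. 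This avoids having to make each individual $\Omega_{B/A}\otimes_B A'$ flat and only uses that cofinally in the colimit system the terms can be taken of the form $\Omega_{C/A}\otimes_C A'$ with $C/A$ smooth, which is precisely what Theorem \ref{gnd} provides.
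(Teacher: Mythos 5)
Your final paragraph is exactly the paper's argument: by Theorem \ref{gnd} the smooth finite-type $A$-algebras mapping to $A'$ are cofinal among all of them, so $A'$ is a filtered inductive limit of smooth $A$-algebras $C$, hence $\Omega_{A'/A}=\varinjlim\, A'\otimes_C\Omega_{C/A}$ is a filtered colimit of projective (flat) modules and is therefore flat. The detour in your second paragraph (trying to realize each $\Omega_{B/A}\otimes_B A'$ as a direct summand of $\Omega_{C/A}\otimes_C A'$) would not work in general and is rightly abandoned; the cofinality argument you settle on is the correct and intended proof.
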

For the proof note that by Theorem \ref{gnd} it follows
that $A'$ is a filtered inductive limit of     some smooth $A$-algebras $C$ and so $\Omega_{A'/A}$ is a filtered inductive limit of $A'\otimes_C\Omega_{C/A}$, the last modules being   free modules.

\begin{Definition}
A Noetherian local ring $(A,\mm)$ has the {\em strong Artin approximation property} if for every finite system of polynomial equations $f$ in $Y=(Y_1,\ldots,Y_n)$ over $A$ there exists a map $\nu:\mathbb N\to \mathbb N$ with the following property:
\begin{quote}
If $y'\in A^n$ satisfies $f(y')\equiv 0\ \mbox{modulo}\ \mm^{\nu(c)}$, $c\in \bf N$, then there exists a solution $y\in A^n$ of $f$ with $y\equiv y'\ \mbox{modulo}\ \mm^c$.
\end{quote}

\end{Definition}
M. Greenberg \cite{Gr}  proved that excellent Henselian discrete valuation rings have the strong Artin approximation property and $\nu$ is linear in this case.
 \begin{Theorem}[M. Artin \cite{A1}] The algebraic power series ring over a field has   the strong Artin approximation property.
 \end{Theorem}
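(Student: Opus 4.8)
The plan is to deduce the strong Artin approximation property for $K\langle x\rangle$ from the (ordinary) Artin approximation property of Theorem \ref{ar}, together with a compactness/ultraproduct argument. The key observation is that, since $K\langle x\rangle$ is Noetherian, excellent and Henselian, its completion is $\hat R = K[[x]]$ and Theorem \ref{ar} already gives us a bridge between solutions over $R$ and solutions over $\hat R$ modulo $\mm^c$; what remains is to convert this into the \emph{uniform} statement defining strong Artin approximation.

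First I would argue by contradiction: fix a system $f$ in $Y = (Y_1,\dots,Y_n)$ over $R=K\langle x\rangle$ and suppose no function $\nu$ works. Negating the definition for each candidate $\nu$, and in particular for the identity-shifted functions $\nu(c)=d$ with $d$ ranging over $\NN$, one extracts for each $c$ and each $d\geq c$ a tuple $y^{(d)}\in R^n$ with $f(y^{(d)})\equiv 0 \bmod \mm^d$ but with no genuine solution $y\in R^n$ congruent to $y^{(d)}$ mod $\mm^c$. Fixing $c$ and letting $d\to\infty$ produces a sequence of ``approximate solutions'' of unbounded accuracy that cannot be corrected within precision $c$. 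The next step is to pass to an ultraproduct (over a non-principal ultrafilter on $\NN$) of the rings $R$ along this sequence: the tuples $y^{(d)}$ glue to an element $\tilde y$ of the ultrapower $R^* = \prod R/\mathcal U$, and because $f(y^{(d)})\equiv 0 \bmod \mm^d$ with $d\to\infty$, the image of $\tilde y$ in the separated completion of $R^*$ with respect to the ideal generated by $x$ is an honest zero of $f$.

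The main obstacle — and the technical heart — is to relate this ultrapower $R^*$ back to $R$ itself so that Theorem \ref{ar} can be applied. One route is to note that $R^*$ is a Henselian local ring whose $x$-adic completion receives the element $\tilde y$ as a solution of $f$; by a suitable form of Artin approximation over $R^*$ (or by descending first to a Noetherian local subring: the $K$-subalgebra of $R^*$ generated by $x$ and the finitely many coordinates of $\tilde y$ and their conjugates is essentially of finite type, hence one can arrange it to be an algebraic power series ring over a finitely generated field extension of $K$, to which Theorem \ref{ar} applies) we obtain an exact solution $\bar y$ in that subring with $\bar y \equiv \tilde y \bmod \mm^c$. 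Unwinding the ultraproduct, $\bar y$ has components that are, for $\mathcal U$-almost all $d$, elements $z^{(d)}\in R^n$ which \emph{do} solve $f$ and satisfy $z^{(d)}\equiv y^{(d)}\bmod \mm^c$ — contradicting the choice of the $y^{(d)}$.

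To make the linearity remark of Greenberg plausible in the field case as well, one would in addition track, via the effective bounds in Artin's proof (or via Greenberg's original discrete-valuation-ring argument pulled back along the faithfully flat map $R\to R[[x_m]]$-type reductions), that $\nu$ can be chosen linear; but the mere \emph{existence} of $\nu$ is what the statement asserts, and the contradiction above delivers it. I expect the delicate points to be: (i) checking that the relevant ultrapower or finitely-generated subring is genuinely of the form to which Theorem \ref{ar} applies (this is where excellence and the algebraic-power-series structure are essential), and (ii) the bookkeeping that turns a single bad $\nu$ into the sequence $y^{(d)}$ and then back again — routine but easy to garble.
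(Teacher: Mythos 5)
Your route is the ultrapower/compactness argument, which is precisely the method this survey points to (the Pfister--Popescu theorem that Artin approximation implies strong Artin approximation, proved by ``ultrapower methods''); the paper itself does not reprove the theorem, and Artin's original proof in the cited source is a different, more constructive induction on the number of variables via Weierstrass division, ultimately resting on Greenberg's theorem. So the overall shape of your argument is sound: negate the existence of $\nu$, extract for a fixed $c$ a sequence $y^{(d)}$ with $f(y^{(d)})\equiv 0$ mod $\mm^d$ admitting no correction mod $\mm^c$, glue into $\tilde y$ in the ultrapower $R^*$, produce an exact solution nearby, and unwind by \L o\'s to contradict the choice of $y^{(d)}$.

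Two steps, however, are where all the content sits and your sketch does not yet carry them. First, the ``honest zero'' of $f$ does not live in a completion of $R^*$ in any naive sense: the object you need is $R^*/I$ where $I=\bigcap_d \mm^d R^*$ is the ideal of infinitesimals, and the nontrivial lemma (the catapower lemma of van den Dries and Pfister--Popescu) is that $R^*/I\cong K^*[[x]]$, a Noetherian complete local ring over the ultrapower field $K^*$; without this identification you have no ring in which Theorem \ref{ar} or any approximation statement applies. Second, your parenthetical descent ``to the $K$-subalgebra of $R^*$ generated by $x$ and the coordinates of $\tilde y$'' would fail: the exact solution lives in the quotient $K^*[[x]]$, not in $R^*$, so there is no finitely generated subring of $R^*$ containing it, and its coefficients generate an infinitely generated field extension in general. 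The step that works is your first alternative, made precise: apply Theorem \ref{ar} over the field $K^*$ to get a solution in $K^*\langle x\rangle$ congruent to $\tilde y$ mod $(x)^c$, and then observe that $K^*\langle x\rangle$ embeds into $R^*$ because $R^*$ is a Henselian local ring containing $K^*[x]$ localized at $(x)$ (Henselianity passes to ultrapowers). Since $I\subset \mm^c R^*$, the congruence mod $\mm^c$ lifts from $R^*/I$ to $R^*$, and \L o\'s then yields exact solutions $z^{(d)}\equiv y^{(d)}$ mod $\mm^c$ for $\mathcal U$-almost all $d$, giving the contradiction. With these two repairs your proof is complete; the remark about linearity of $\nu$ is irrelevant to the statement (and in fact $\nu$ need not be linear here, as the paper notes).
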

 Note that in general $\nu$ is not linear as it is showed in \cite{Rond}.
The following theorem was conjectured by M. Artin in \cite{A2}.

\begin{Theorem} [\cite{P'''}] \label{p'''} Let $A$ be an excellent Henselian discrete valuation ring and $A\langle x\rangle$ the ring of algebraic power series  in $x$ over $A$. Then $A\langle x\rangle$ has the strong Artin approximation property.
\end{Theorem}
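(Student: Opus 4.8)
The plan is to derive the strong approximation property for $R=A\langle x\rangle$ from its ordinary Artin approximation property (Theorem~\ref{main}, available since $A$ excellent Henselian implies $R$ excellent Henselian) together with the General Neron Desingularization (Theorem~\ref{gnd}), by an ultraproduct argument. Write $\mm=(t,x_1,\dots,x_m)$ with $t$ a uniformizer of $A$, so $R$ is a regular local ring with $\hat R=\hat A[[x]]$. First I would make the standard reduction: if strong approximation fails for some system $f$ over $R$, then there is a single integer $c_0$ and a sequence $y^{(N)}\in R^n$ with $f(y^{(N)})\in\mm^N$ for all $N$, such that no exact solution of $f$ in $R^n$ is congruent to $y^{(N)}$ modulo $\mm^{c_0}$ (otherwise, choosing for each $c$ a working bound and assembling these bounds into a function $\nu$ gives strong approximation).

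Next, fix a non-principal ultrafilter $\mathcal U$ on $\NN$ and form the ultrapower $R^{*}=\prod_N R/\mathcal U$, a local ring with residue field $K^{*}=\prod_N K/\mathcal U$. Since $\mm$ is finitely generated, $\mm^{k}R^{*}$ consists of the classes $[(a_N)]$ with $a_N\in\mm^{k}$ for $\mathcal U$-almost all $N$, so $y^{*}=[(y^{(N)})]$ satisfies $f(y^{*})\in\bigcap_k\mm^{k}R^{*}=:I$. Let $\widehat{R^{*}}=\varprojlim_k R^{*}/\mm^{k}R^{*}$. Because $R/\mm^{k}$ is Artinian of finite length $e_k$, its ultrapower $R^{*}/\mm^{k}R^{*}=(R/\mm^{k})^{*}$ is again Artinian of length $e_k$; hence $\widehat{R^{*}}$ is $\mm$-adically complete with $\mm/\mm^{2}$ a finite-dimensional $K^{*}$-space, so it is Noetherian, and a Hilbert--Samuel count identifies it as a complete regular local ring of dimension $m+1$, $\widehat{R^{*}}\cong\widehat{A^{*}}[[x]]$ over the complete discrete valuation ring $\widehat{A^{*}}$ (the completion of the ultrapower $A^{*}=\prod_N A/\mathcal U$). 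Since $I=\ker(R^{*}\to\widehat{R^{*}})$, the image $\hat y^{*}$ of $y^{*}$ satisfies $f(\hat y^{*})=0$ in $\widehat{R^{*}}$.

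Now I would observe that $R\to\widehat{R^{*}}$ is a regular morphism of Noetherian rings: it factors as $R\to\hat R\to\widehat{R^{*}}$, the first map being regular by excellence of $R$, and $\hat R=\hat A[[x]]\to\widehat{A^{*}}[[x]]$ being (the completion of) the base change of $\hat A\to\widehat{A^{*}}$, a flat local morphism of complete discrete valuation rings whose closed fibre is the separable extension $K\subseteq K^{*}$ and whose generic fibre is a separable extension of fraction fields (automatic in residue characteristic $0$, a short valuation-theoretic check in equal characteristic $p$, using $(K^{*})^{p}\cap K=K^{p}$ and that the uniformizer has valuation prime to $p$). By Theorem~\ref{gnd} the morphism $B:=R[Y]/(f)\to\widehat{R^{*}}$, $Y\mapsto\hat y^{*}$, factors through a smooth $R$-algebra $C$: $B\to C\xrightarrow{w}\widehat{R^{*}}$. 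Reducing modulo $\mm^{c_0+1}$ and using $\widehat{R^{*}}/\mm^{c_0+1}\widehat{R^{*}}=(R/\mm^{c_0+1})^{*}$, the map $w$ exhibits the tuple $[(y^{(N)}\bmod\mm^{c_0+1})]$ as the $Y$-value of a $(R/\mm^{c_0+1})^{*}$-point of the finite-type scheme $\Spec(C/\mm^{c_0+1}C)$ over $R/\mm^{c_0+1}$. Since the existence of such a point with a prescribed $Y$-value is a first-order statement over the finite-length ring $R/\mm^{c_0+1}$, \L o\'s's theorem gives, for $\mathcal U$-almost all $N$, an $R/\mm^{c_0+1}$-point $\eta_N$ of $C$ whose composite with $B\to C$ sends $Y$ to $y^{(N)}\bmod\mm^{c_0+1}$. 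As $R$ is Henselian and $C$ is smooth over $R$, each $\eta_N$ lifts to an $R$-point $\tilde\eta_N$ of $C$ with $\tilde\eta_N\equiv\eta_N\pmod{\mm^{c_0+1}}$; then $B\to C\xrightarrow{\tilde\eta_N}R$ yields a solution of $f$ in $R^n$ congruent to $y^{(N)}$ modulo $\mm^{c_0}$, contradicting the choice of the $y^{(N)}$. (For $m=0$ this reproduces Greenberg's theorem.)

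The hypotheses on $A$ are used only through the two facts that $R=A\langle x\rangle$ is excellent Henselian and that $\hat A\to\widehat{A^{*}}$ is regular; the genuine work is the analysis of the completed ultrapower $\widehat{R^{*}}$ — establishing that it is Noetherian (the finite-length argument) and that $R\to\widehat{R^{*}}$ is a regular morphism — which is where I expect the main obstacle to lie. Everything afterwards, namely factoring through a smooth algebra by the General Neron Desingularization, transferring the existence of a point back to almost all indices $N$ by \L o\'s's theorem, and lifting $R/\mm^{c_0+1}$-points along a smooth morphism using Henselianity of $R$, is formal; one only needs to be careful to phrase the relevant existence assertions as first-order statements over a ring of finite length, so that \L o\'s's theorem genuinely applies.
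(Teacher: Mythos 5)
Your overall strategy --- deducing strong approximation from ordinary approximation plus Theorem \ref{gnd} via an ultrapower of $R=A\langle x\rangle$ --- is exactly the ``easy direct proof \dots using Theorem \ref{gnd} and the ultrapower methods'' that the text attributes to \cite[4.5]{P}, and it differs from the route the paper actually takes for Theorem \ref{p'''}, which is a two-line citation: $A\langle x\rangle$ has the Artin approximation property by Theorem \ref{a1}, and the Pfister--Popescu theorem (``$A$ has the strong Artin approximation property if it has the Artin approximation property'') does the rest. Your reduction to a sequence $y^{(N)}$, the identification $R^*/\mm^kR^*=(R/\mm^k)^*$, the Noetherianity of $\widehat{R^*}$ via the finite-length count, the application of Theorem \ref{gnd}, the transfer back to almost all $N$ by \L o\'s, and the lifting of $R/\mm^{c_0+1}$-points of the smooth algebra $C$ over the Henselian ring $R$ are all correct; this is the formal part of the argument.

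The genuine gap is where you yourself suspect it: the regularity of $R\to\widehat{R^*}$. Checking that $\hat A\to\widehat{A^*}$ is regular (two fibres, both separable field extensions) is fine, but regularity of $\hat A[[x]]\to\widehat{A^*}[[x]]$ does not follow from its being ``(the completion of) the base change'' of $\hat A\to\widehat{A^*}$. Regular morphisms are stable under base change only along morphisms of finite type: the fibre of $\hat A[[x]]\to\widehat{A^*}\otimes_{\hat A}\hat A[[x]]$ over $P\in\Spec \hat A[[x]]$ is $\kappa(P)\otimes_{\kappa(p)}F_p$ with $p=P\cap\hat A$, and geometric regularity of the fibre $F_p$ over $\kappa(p)$ gives no control over this tensor product for an arbitrary, non-finitely-generated extension $\kappa(P)/\kappa(p)$; moreover the subsequent completion to $\widehat{A^*}[[x]]$ is precisely the kind of operation that can destroy regularity of fibres --- that is the whole content of excellence. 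Concretely, already the fibre over the prime $(t)$ forces you to prove that $Q\big(K[[x]]\big)\to Q\big(K^*[[x]]\big)$ is separable when $\chara K=p$ and $[K:K^p]=\infty$, and this requires an actual argument (using that every element of $K^*[[x]]$ is the image under $R^*\to\widehat{R^*}$ of an ultraproduct of elements of $K[[x]]$), not a formal base-change remark. So you must either prove the Pfister--Popescu lemma that $C\to\widehat{C^*}$ is regular for every Noetherian complete local ring $C$ (applied to $C=\hat R$ and composed with $R\to\hat R$, which is regular by excellence), or quote it from \cite{PP} --- at which point your argument collapses into the paper's own derivation.
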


 \begin{Theorem}[Pfister-Popescu \cite{PP} (see also \cite{K}, \cite{P''})] The Noetherian complete local rings have the strong Artin approximation property. In particular, $A$ has the strong Artin approximation property if it has the  Artin approximation property.
  \end{Theorem}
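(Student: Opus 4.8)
The plan is to reduce, via Cohen's structure theorem, to power series rings over a field or over a complete discrete valuation ring, to treat those by induction on the number of variables using Weierstrass preparation (with Greenberg's theorem as the one‑variable base case), and to pull the result back to an arbitrary complete local ring; throughout, passage to a suitable ultrapower is a convenient device for handling the quantifier ``every sufficiently high‑order approximate solution lies near a genuine one.'' So, suppose a complete Noetherian local ring $(A,\mm)$ lacked the strong Artin approximation property for some finite system $f\in A[Y]^q$, $Y=(Y_1,\dots,Y_n)$: there would be a fixed $c\in\NN$ and, for every $t\in\NN$, an element $y^{(t)}\in A^n$ with $f(y^{(t)})\equiv 0\pmod{\mm^t}$ but with no exact solution of $f$ in $A^n$ congruent to $y^{(t)}$ modulo $\mm^c$. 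Fix a non‑principal ultrafilter $\mathcal U$ on $\NN$ and set $A^*=A^{\NN}/\mathcal U$, a local ring with maximal ideal $\mm^*=\mm A^*$ and $(\mm^*)^t=(\mm^tA)^*$. The class $y^*$ of $(y^{(t)})_t$ satisfies $f(y^*)\in\bigcap_t(\mm^*)^t=:\mathfrak c$, and $\mathfrak c=\ker(A^*\to\widehat{A^*})$ for the $\mm^*$‑adic completion $\widehat{A^*}$, so $f$ has the \emph{exact} solution ``$y^*\bmod\mathfrak c$'' in $\widehat{A^*}$, congruent to $y^*$ modulo $(\mm^*)^c$; on the other hand, by transfer, $f$ has no exact solution in $A^*$ congruent to $y^*$ modulo $(\mm^*)^c$. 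Hence it suffices to prove that $(A^*,\mm^*)$ has the Artin approximation property relative to $\widehat{A^*}$.

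Completeness of $A$ enters first through the identity $\widehat{A^*}\cong A^*/\mathfrak c$. By exactness of the ultrapower, $A^*/(\mm^*)^t\cong(A/\mm^t)^*$, so $\widehat{A^*}=\varprojlim_t(A/\mm^t)^*$; given a compatible family $(\xi_t)_t$ with representatives $\tilde\xi^{(s)}_t\in A$, one produces a preimage by putting $\zeta^{(s)}=\lim_t\tilde\xi^{(s)}_t$ (convergent because $A$ is complete) on the $\mathcal U$‑large set where $(\tilde\xi^{(s)}_t)_t$ is $\mm$‑adically Cauchy, and $\zeta^{(s)}$ a suitable finite truncation of that sequence elsewhere. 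Consequently the task is to lift a solution of $f$ from $A^*/\mathfrak c$ back to $A^*$ within the congruence class modulo $(\mm^*)^c$. By Cohen's structure theorem write $A=R/I$ with $R$ a complete regular local ring, namely $R=k[[x_1,\dots,x_N]]$ over a field or $R=V[[x_1,\dots,x_N]]$ over a complete discrete valuation ring $V$; then $A^*=R^*/IR^*$, again by exactness, and the Artin approximation property descends along the surjection $R^*\twoheadrightarrow R^*/IR^*$ with kernel the finitely generated ideal $IR^*$ exactly as in the Noetherian case — replace a system $g$ over $R^*/IR^*$ by $\tilde g(Y)-\sum_j i_jW_j$ over $R^*$, using that $\widehat{R^*}$ is again Noetherian so that $I\widehat{R^*}$ is closed. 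So it remains to establish the Artin approximation property for $R^*$; unwinding the ultrapower, this is equivalent to the strong Artin approximation property for the power series ring $R$, with a bound depending only on the shape of the system.

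The latter is proved by induction on the number $N$ of variables. The base case $N=0$ is a field (trivial) or a complete discrete valuation ring, where Greenberg's theorem gives even a linear bound, manifestly uniform over systems of a fixed shape. In the inductive step from $W$ to $W[[x]]$ one makes the equations regular in $x$ after a coordinate change, removes $x$ by Weierstrass division/preparation, and corrects an approximate solution to an exact one by a Tougeron‑type implicit‑function (Newton) step, invoking the induction hypothesis over $W$ degree‑by‑degree in $x$ and tracking orders throughout. This inductive step — not the ultrapower formalism, which is essentially bookkeeping — is the main obstacle: one must bound how much a high‑order approximate solution has to be moved to become exact, and that requires a careful analysis of how errors propagate under the coordinate change, under Weierstrass division, and under the Newton iteration, and that the bound so produced stays uniform over systems of a given shape. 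Granting it, the sought‑for exact solution of $f$ in $A^*$ congruent to $y^*$ modulo $(\mm^*)^c$ is produced, contradicting the choice of the $y^{(t)}$; hence $A$ has the strong Artin approximation property.

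For the final assertion: if $(A,\mm)$ has the Artin approximation property, apply the strong Artin approximation property of the complete ring $\hat A$ to a system $f$ over $A$ to obtain a bound $\nu$, and note that the same $\nu$ works for $A$. Indeed, if $y'\in A^n$ satisfies $f(y')\equiv 0\pmod{\mm^{\nu(c)}}$ then, since $\mm^{\nu(c)}\hat A\cap A=\mm^{\nu(c)}$, the congruence holds in $\hat A$ as well, so there is $\hat z\in\hat A^n$ with $f(\hat z)=0$ and $\hat z\equiv y'\pmod{\mm^c\hat A}$; applying the Artin approximation property of $A$ to $\hat z$ at precision $c$ produces $z\in A^n$ with $f(z)=0$ and $z\equiv\hat z\pmod{\mm^c\hat A}$, whence $z-y'\in A\cap\mm^c\hat A=\mm^c$, i.e.\ $z\equiv y'\pmod{\mm^c}$.
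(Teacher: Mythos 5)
There is a genuine gap, and it sits exactly where the theorem's content lies. Your ultrapower reduction (pass to $A^*=A^{\NN}/\mathcal U$, observe $\mathfrak c=\bigcap_t(\mm^*)^t=\ker(A^*\to\widehat{A^*})$ and that $\widehat{A^*}\cong A^*/\mathfrak c$ is again Noetherian complete because $A$ is complete, then transfer) is correct bookkeeping, and so is the Cohen-structure reduction to $R=k[[x]]$ or $V[[x]]$ and the final deduction of the ``in particular'' clause. But after all of that, what remains to be shown is precisely the strong Artin approximation property for complete regular local rings --- i.e.\ a special case of the theorem itself --- and at that point you write that the inductive Weierstrass/Newton step ``is the main obstacle'' and proceed ``granting it.'' That step is not a technicality one may grant: it is the whole proof. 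Worse, the mechanism you sketch for it (correct an approximate solution to an exact one by a Tougeron-type implicit-function iteration, ``tracking orders throughout'') would, if it went through naively, produce an Artin function $\nu$ that is linear in $c$ uniformly in the data; but the paper itself cites Rond \cite{Rond} for the fact that $\nu$ is in general \emph{not} linear. So the inductive step cannot be carried out in the form you describe without a substantially different idea, and no such idea is supplied.

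For comparison, the proof the paper points to avoids this induction entirely. The original argument is in Pfister--Popescu \cite{PP}, and the ``easy direct proof'' indicated in \cite[4.5]{P} runs as follows: one shows that the composite $A\to A^*\to A_1:=A^*/\mathfrak c$ is a \emph{regular} morphism of Noetherian rings (this is where completeness/excellence of $A$ is really used, not Weierstrass division); the class $y^*$ of the approximate solutions gives an exact solution of $f$ in $A_1$, so by Theorem \ref{gnd} the map $B=A[Y]/(f)\to A_1$ factors through a smooth $A$-algebra $C$; finally the induced point $C\to A_1/\mm_1^c\cong (A/\mm^c)^*$ is, by Łoś, a family of points $C\to A/\mm^c$ for $\mathcal U$-many indices, each of which lifts to $C\to A$ because $C$ is smooth and $A$ is Henselian and complete, yielding exact solutions of $f$ in $A$ congruent to $y^{(t)}$ modulo $\mm^c$ --- the desired contradiction. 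If you want to complete your write-up, you should either import this regularity-plus-desingularization argument or supply a genuine proof of the inductive step, which is a serious undertaking in its own right.
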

  Thus Theorem \ref{p'''} follows from Theorem \ref{a1} and Theorem \ref{main} gives that excellent Henselian local rings have the strong Artin approximation property.  An easy direct  proof of this fact is given in  \cite[4.5]{P} using Theorem \ref{gnd} and the ultrapower methods.

What about the converse implication in  Theorem \ref{main}? It is clear that $A$ is Henselian if it has the  Artin approximation property.
On the other hand, if $A$ is reduced and it has the  Artin approximation property, then $\hat A$ is reduced, too. Indeed, if $\hat z\in \hat A$ is nonzero and satisfies $\hat z^r=0$ then choosing $c\in \bf N$ such that $\hat z\not\in \mm^c\hat A$  we get a $z\in A$ such that $z^r=0$ and $z\equiv \hat z\ \mbox{modulo}\ \mm^c\hat A$. It follows that $z\not =0$, which contradicts our hypothesis.
It is easy to see  that a local ring $B$ which is finite as a module over  $A$ has the Artin approximation property if $A$ has it. It follows that if $A$ has
the Artin approximation property, then it has  so called reduced formal fibers. In particular, $A$ must be a so called universally japanese ring.

Using also the strong Artin  approximation property it is possible to prove that given a system of polynomial equations $f\in A[Y]^r$, $Y=(Y_1,\ldots,Y_n)$ and another one $g\in A[Y,Z]^t$, $Z=(Z_1,\ldots,Z_s)$ then the sentence
$$L_A:  =\ \  \mbox{there exists}\ y\in A^n \ \ \mbox{such that}\ f(y)=0 \ \mbox{and}\ g(y,z)\not = 0 \ \mbox{for all} \ z\in A^s$$
holds in $A$ if and only if $L_{\hat A}$ holds in $\hat A$ provided that $A$ has the Artin approximation property. In this way it was proved in \cite{BNP} that if $A$ has the Artin approximation property, then $A$ is a normal domain if and only if $\hat A$ is a normal domain, too (this was actually  the starting point of the quoted paper). Later, Cipu and myself \cite{CP} used this fact to show that the formal fibers of $A$ are the so called   geometrically normal domains if $A$ has the Artin approximation property. Finally, Rotthaus \cite{Rot} proved that  $A$ is excellent if $A$ has the Artin approximation property.

Next, let $(A,\mm)$ be an excellent Henselian local ring,  $\hat A$ its completion and MCM$(A)$ (resp. MCM$(\hat A)$) be  the set of isomorphism classes of maximal Cohen Macaulay modules over $A$ (resp. $\hat A$). Assume that $A$ is an isolated singularity.  Then a maximal Cohen-Macaulay module is free on the punctured spectrum. Since $\hat A$ is also an isolated singularity we see that the map $\varphi:$MCM$(A)\to $MCM$(\hat A)$ given by $M\to \hat A\otimes_AM$ is surjective by a theorem of Elkik \cite[Theorem 3]{El}.

\begin{Theorem} [Popescu-Roczen, \cite{PR}] $\varphi$ is bijective.
\end{Theorem}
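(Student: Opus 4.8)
The plan is to treat the two directions of bijectivity separately. Surjectivity of $\varphi$ is already recalled in the paragraph preceding the statement: since $A$ (and hence $\hat A$) is an isolated singularity, every maximal Cohen--Macaulay module is free on the punctured spectrum, and Elkik's theorem \cite[Theorem 3]{El} lifts any isomorphism class in MCM$(\hat A)$ to one in MCM$(A)$. So the genuine content is the \emph{injectivity} of $\varphi$. Here the idea is that an isomorphism of maximal Cohen--Macaulay $A$-modules is, after fixing finite presentations, precisely a solution of a finite system of polynomial equations over $A$; the Artin approximation property of $A$ (Theorem \ref{main}, applicable since $A$ is excellent Henselian local) then transfers such a solution from $\hat A$ back to $A$.

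In detail, suppose $M,N\in$ MCM$(A)$ with $\hat A\otimes_A M\cong\hat A\otimes_A N$. Since $\hat A$ is faithfully flat over $A$, $M$ and $N$ have the same minimal number of generators $t$; choose finite free presentations $A^p\xrightarrow{f}A^t\to M\to 0$ and $A^q\xrightarrow{g}A^t\to N\to 0$, which on applying $\hat A\otimes_A-$ become presentations of $\hat M$ and $\hat N$. First I would lift a fixed isomorphism $\hat M\to\hat N$ and its inverse along these free covers, producing matrices $U,U'\in\hat A^{t\times t}$, $V\in\hat A^{q\times p}$, $V'\in\hat A^{p\times q}$, $W\in\hat A^{t\times q}$, $W'\in\hat A^{t\times p}$ over $\hat A$ with
\[
Uf=gV,\qquad U'g=fV',\qquad UU'-I_t=gW,\qquad U'U-I_t=fW'.
\]
Read as equations in the entries of these six matrices, with the entries of $f,g\in A$ as coefficients, this is a finite system of polynomial equations over $A$ (each equation at worst quadratic) which, by the construction just made, is solvable over $\hat A$.

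By Theorem \ref{main} the system is then solvable over $A$; note that only the bare solvability-transfer form of the Artin approximation property is used, no $\mm$-adic proximity. A solution $(U_0,U_0',V_0,V_0',W_0,W_0')$ over $A$ gives, via the first two relations, $A$-linear maps $\bar U_0\colon M\to N$ and $\bar U_0'\colon N\to M$ (each matrix carries $\Im f$ into $\Im g$, respectively $\Im g$ into $\Im f$), and the last two relations say exactly that $\bar U_0'\bar U_0=\id_M$ and $\bar U_0\bar U_0'=\id_N$. Hence $M\cong N$ over $A$, and $\varphi$ is injective.

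I do not expect a serious obstacle. The presentation bookkeeping and the check that the displayed relations form a polynomial system over $A$ are routine, and the one substantive ingredient, Theorem \ref{main}, is already available to us (resting on the General Neron Desingularization, Theorem \ref{gnd}). The only points calling for a little care are that no approximation-with-congruence is required here, since plain solvability over $A$ already produces an honest $A$-isomorphism, and that the isolated-singularity hypothesis enters only through Elkik's theorem for surjectivity, not for injectivity.
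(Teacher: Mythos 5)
Your proof is correct, and it follows the same basic strategy as the paper: encode an isomorphism of the finitely presented modules $M$ and $N$ as a finite system of polynomial equations over $A$, observe that the hypothesis $\hat A\otimes_AM\cong\hat A\otimes_AN$ provides a solution in $\hat A$, and invoke the Artin approximation property to descend to $A$ (surjectivity being Elkik's theorem in both cases). The one genuine difference is in how invertibility is handled. The paper presents both modules as quotients of the \emph{same} free module $A^n$ and looks for a single automorphism of $A^n$, given by a matrix $(x_{ij})$, carrying the relation submodule $(u)$ onto $(v)$; the condition $\det(x_{ij})\notin\mm$ is an open condition rather than an equation, so the paper must use the full approximation-with-congruence form of Theorem \ref{main} (with $c=1$) to guarantee that the approximating solution still has unit determinant. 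You instead encode a map in each direction together with explicit witnesses $W,W'$ for the relations $UU'-I_t=gW$ and $U'U-I_t=fW'$, which makes the entire isomorphism condition, including two-sided invertibility, a genuine system of polynomial equations; as you note, only the bare solvability-transfer form of the approximation property is then needed. Your encoding is slightly more robust (it would work verbatim with presentations by different numbers of generators), while the paper's is more economical in variables; both are complete arguments.
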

\begin{proof}
Let $M,N$ be two finite $A$-modules. We may suppose that $M=A^n/(u)$, $N=A^n/(v)$, $u_k=\sum_{j\in [n]} u_{kj} e_j$, $k\in [t]$, $v_r=\sum_{j\in [n]} v_{rj} e_j$, $r\in [p]$, where $u_{kj}, v_{rj}\in A$ and $(e_j)$ is the canonical basis of $A^n$. Let $f:A^n\to A^n$ be an $A$-linear  map defined by an invertible $n\times n$-matrix $(x_{ij})$ with respect to $(e_j)$. Then $f$ induces a  bijection $M\to N$ if and only if $f$ maps $(u)$ onto $(v)$, that is there exist $y_{kr},z_{rk}\in A$, $k\in [t]$, $r\in [p]$ such that

1) $f(u_k)=\sum_{r\in [p]}y_{kr}v_r$, $k\in [t]$ and

2) $f(\sum_{k\in [t]} z_{rk}u_k)=v_r$, $r\in [p].$

Note that 1), 2) are equivalent to

$1')$  $\sum_{i\in [n]}u_{ki}x_{ij}=\sum_{r\in [p]}y_{kr}v_{rj}$,  $k\in [t]$, $j\in [n]$,

$2')$  $\sum_{k\in [t]} z_{rk}(\sum_{i\in [n]}u_{ki}x_{ij})=v_{rj}$,  $r\in [p]$, $j\in [n]$.

Therefore, if  $\hat A\otimes_AM\cong \hat A\otimes _AN$   there exist $(\hat x_{ij})$, $(\hat y_{kr})$, $(\hat z_{rk})$ in $\hat A$ such that $\det(\hat x_{ij})\not \in \mm$ and

$1'')$  $\sum_{i\in [n]}u_{ki}\hat x_{ij}=\sum_{r\in [p]}\hat y_{kr}v_{rj}$,  $k\in [t]$, $j\in [n]$,

$2'')$  $\sum_{k\in [t]} \hat z_{rk}(\sum_{i\in [n]}u_{ki}\hat x_{ij})=v_{rj}$,  $r\in [p]$, $j\in [n]$.

Then by the Artin approximation property there exists a solution of $1'')$, $2'')$, let us say $(x_{ij})$, $( y_{kr})$, $( z_{rk})$ in $ A$, such that $x_{ij}\equiv \hat x_{ij}$, $y_{kr}\equiv \hat y_{kr}$,
$z_{rk}\equiv \hat z_{rk}$ modulo $\mm\hat A$. It follows that $\det(x_{ij})\equiv \det (\hat x_{ij})\not \equiv 0$ modulo $\mm\hat A$ and so $M\cong N$.
\hfill\ \end{proof}

\begin{Corollary} In the hypothesis of the above theorem if $M\in$ MCM$(A)$ is indecomposable, then $\hat A\otimes_AM$ is indecomposable, too.
\end{Corollary}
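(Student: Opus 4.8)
The plan is to derive the statement formally by contradiction, using only the bijectivity of $\varphi$ proved above together with Elkik's surjectivity statement recalled just before the theorem.

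So suppose $M\in$ MCM$(A)$ is indecomposable while $\hat M:=\hat A\otimes_A M$ is not, say $\hat M\cong N_1'\oplus N_2'$ with both $N_1'$ and $N_2'$ nonzero maximal Cohen-Macaulay $\hat A$-modules. Since $\varphi$ is surjective, I would choose $N_1,N_2\in$ MCM$(A)$ with $\hat A\otimes_A N_i\cong N_i'$, and note that $N_i\neq 0$ because $N_i'\neq 0$. A finite direct sum of maximal Cohen-Macaulay modules is again maximal Cohen-Macaulay, so $N_1\oplus N_2\in$ MCM$(A)$; and, $\hat A$ being flat over $A$,
$$\varphi(N_1\oplus N_2)=\hat A\otimes_A(N_1\oplus N_2)\cong(\hat A\otimes_A N_1)\oplus(\hat A\otimes_A N_2)\cong N_1'\oplus N_2'\cong\hat M=\varphi(M).$$
Injectivity of $\varphi$ then forces $M\cong N_1\oplus N_2$ with $N_1,N_2$ nonzero, contradicting the indecomposability of $M$; hence $\hat M$ is indecomposable.

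The argument is essentially formal once the theorem is available, so there is no real analytic obstacle here: the whole weight of the corollary rests on the injectivity half of the preceding theorem, which is exactly where the Artin approximation property of $A$ was used. The only minor point to watch is that the modules $N_1,N_2$ produced by Elkik's surjectivity statement are genuine maximal Cohen-Macaulay $A$-modules, so that $\varphi$ and its injectivity may legitimately be applied to $N_1\oplus N_2$; but this is built into the way $\varphi$ was set up.
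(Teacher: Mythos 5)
Your argument is correct and coincides with the paper's own proof: both lift the direct summands of $\hat A\otimes_AM$ via the surjectivity of $\varphi$ and then invoke its injectivity to decompose $M$, contradicting indecomposability. The extra care you take about flatness and about $N_1\oplus N_2$ being maximal Cohen--Macaulay is implicit in the paper but adds nothing essentially new.
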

\begin{proof} Assume that $\hat A\otimes_AM=\hat N_1\oplus \hat N_2$. Then $\hat N_i\in $ MCM$(\hat A)$ and by the surjectivity of $\varphi$ we get $\hat N_i=\hat A\otimes_AN_i$ for some $N_i\in $ MCM$(A)$. Then $\hat A\otimes_AM\cong (\hat A\otimes_AN_1)\oplus (\hat A\otimes_AN_2)$ and the injectivity of $\varphi$
gives $M\cong N_1\oplus N_2$.
\end{proof}

\begin{Remark} { If $A$ is not Henselian then the above corollary is false. For example let $A={\bf C}[X,Y]_{(X,Y)}/(Y^2-X^2-X^3)$. Then $M=(X,Y)A$ is indecomposable in MCM$(A)$ but $\hat A\otimes_AM$ is decomposable. Indeed, for $\hat u=\sqrt{1+X}\in \hat A$ we have $\hat A\otimes_AM=(Y-\hat uX)\hat A\oplus  (Y-\hat uX)\hat A$.}
\end{Remark}
\begin{Remark}{ Let $\Gamma(A)$,  $\Gamma(\hat A)$
 be the so called  AR-quivers of $A$, $\hat A$. Then $\varphi$ induces also an inclusion  $\Gamma(A)\subset\Gamma(\hat A)$ (see \cite{PR}).
}
\end{Remark}

\begin{Remark} {It is  known that MCM$(\hat A)$ is finite if and only if $\hat A$ is a simple singularity. What about a
complex unimodal singularity $R$? Certainly in this case  MCM$(R)$ is infinite but maybe there exists  a special property which characterizes the  unimodal singularities. For this purpose it would be necessary to describe somehow MCM$(R)$ at least in some special cases. Small attempts are done by Andreas Steenpass
\cite{St}.
}
\end{Remark}

For most of the cases when we need the Artin approximation property, it  is  enough to apply Artin's Theorem  \ref{ar}.  Sometimes we might need a special kind of Artin approximation, the so called {\em Artin approximation in nested subring condition}, namely the following result which was also considered as possible by M. Artin in \cite{A2}.

\begin{Theorem}[\cite{P}, {\cite[Theorem 3.6]{P2}}] \label{nes}
 Let $K$ be a field,  $ A=K\langle x\rangle$, $x=(x_1,\ldots,x_m)$,  $f=(f_1,\ldots,f_r)\in K\langle x,Y\rangle^r$, $Y=(Y_1,\ldots,Y_n)$  and  $0\leq s_1\leq \ldots \leq s_n\leq m$, $c$ be some non-negative integers. Suppose that $f$ has a solution ${\hat y}=({\hat y}_1,\ldots,{\hat y}_n)$ in   $ K[[x]] $ such that ${\hat y}_i\in K[[x_1,\ldots,x_{s_i}]]$ for all $1\leq i\leq n$. Then there exists a solution $y=(y_1,\ldots,y_n)$  of $f$ in $A$ such that $y_i\in K\langle x_1,\ldots,x_{s_i}\rangle$ for all $1\leq i\leq n$ and $y\equiv{\hat y} \ \ \mbox{mod}\ \ (x)^cK[[x]]$.
\end{Theorem}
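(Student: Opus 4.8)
The plan is to argue by induction on the number $m$ of variables, the case $m=0$ being trivial (take $y=\hat y$). \emph{If $s_n<m$}, no $\hat y_i$ involves $x_m$: expanding $f_i=\sum_{j\ge 0}f_{ij}(x_1,\dots,x_{m-1},Y)\,x_m^j$ with $f_{ij}\in K\langle x_1,\dots,x_{m-1},Y\rangle$ — the coefficients of an algebraic power series with respect to part of the variables being again algebraic power series — the relation $f(x,\hat y)=0$ forces $f_{ij}(x_1,\dots,x_{m-1},\hat y)=0$ for all $i,j$. Since $K\langle x_1,\dots,x_{m-1},Y\rangle$ is Noetherian, finitely many $f_{ij}$, say $h_1,\dots,h_N$, generate the same ideal; thus $\hat y$ solves $(h_1,\dots,h_N)$ over $K\langle x_1,\dots,x_{m-1}\rangle$, and conversely any common zero of the $h_\mu$ is a common zero of all the $f_{ij}$. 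By the inductive hypothesis (with $m-1$ variables, the same nesting $s_1\le\dots\le s_n\le m-1$, and $c$) there is $y\in\prod_i K\langle x_1,\dots,x_{s_i}\rangle$ with $h_\mu(x_1,\dots,x_{m-1},y)=0$ and $y\equiv\hat y$ mod $(x_1,\dots,x_{m-1})^c$; then $f(x,y)=0$ and $y\equiv\hat y$ mod $(x)^c$.

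\emph{If $s_n=m$}, let $q$ be the largest index with $s_q<m$ (if there is none the nesting is vacuous and Theorem \ref{ar} applies); call $Y^\flat=(Y_1,\dots,Y_q)$ the constrained unknowns and $Y^\sharp=(Y_{q+1},\dots,Y_n)$ the free ones (level $m$). Adjoining new unknowns, assigned level $m$, for the étale data realizing each $f_i\in K\langle x,Y\rangle$ inside a finite type $K\langle x\rangle[Y]$-algebra, I may assume $f\in K\langle x\rangle[Y]^r$, in such a way that any nested solution of the enlarged system restricts to a nested solution of the original one. Put $B=K\langle x\rangle[Y]/(f)$ and $w\colon B\to K[[x]]$, $Y\mapsto\hat y$. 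Since $K\langle x\rangle$ is excellent Henselian local, $K\langle x\rangle\to K[[x]]$ is regular, so by General Néron Desingularization (Theorem \ref{gnd}) $w$ factors $B\xrightarrow{\alpha}C\xrightarrow{\beta}K[[x]]$ with $C$ smooth over $K\langle x\rangle$; set $a_i=\alpha(Y_i)$, so $\beta(a_i)=\hat y_i\in K[[x_1,\dots,x_{s_i}]]$ for $i\le q$.

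The crucial step is to choose this desingularization compatibly with the nesting: after enlarging $C$ by finitely many polynomial variables and after a preliminary reduction of $f$ eliminating every constrained unknown that is an algebraic function over $K\langle x\rangle$ of the earlier constrained ones, I want $da_1,\dots,da_q$ to be $K[[x]]$-linearly independent in $\Omega_{C/K\langle x\rangle}\otimes_C K[[x]]$ (hence $C$ of relative dimension $\ge q$). Granting this, fix a presentation $C=K\langle x\rangle[V]/(\mathfrak g)$ with invertible Jacobian minor $\Delta$ and consider the polynomial system over $K\langle x\rangle$ in $(Y^\flat,V,W)$, where $Y^\flat$ keeps levels $s_1,\dots,s_q$ and $V,W$ have level $m$:
\begin{equation*}
\mathfrak g(x,V)=0,\qquad Y_i-a_i(x,V)=0\ \ (i\le q),\qquad \Delta(x,V)\,W-1=0,
\end{equation*}
with formal solution $\hat y^\flat$, $\hat v:=\beta(V)$, $\Delta(x,\hat v)^{-1}$. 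A solution of it in the prescribed subrings yields a nested solution of $f$: such $(y^\flat,v,w)$ determines a $K\langle x\rangle$-point of $C$ with $Y_j\mapsto a_j(v)$, so $f(x,(a_j(v))_j)=0$, with $y_i=a_i(v)$ for $i\le q$ and $y_j:=a_j(v)$ for $j>q$, all congruences mod $(x)^c$ being inherited. Now the independence of the $da_i$ makes the Jacobian of the system with respect to $(V,W)$ have an invertible maximal minor at the formal solution (the $W$-column contributes $\Delta$, and modulo the $\mathfrak g$-rows the $Y_i-a_i$-rows contribute $-da_i$); so by the implicit function theorem over the Henselian ring $K\langle x,Y^\flat,V'\rangle$, with $V'$ the $V$-variables not used in that minor, the system is equivalent near the formal solution to expressing the remaining $V$-variables and $W$ as elements of $K\langle x,Y^\flat,V'\rangle$. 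Thus $Y^\flat$ and $V'$ are free parameters, and taking $y^\flat$, $v'$ to be the truncations at total degree $<c$ of $\hat y^\flat$, $\hat v'$ — polynomials in the appropriate variables, nested and $\equiv$ the formal solution mod $(x)^c$ — and reading off the rest solves the displayed system, hence $f$.

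I expect the main obstacle to be precisely this compatibility: producing a General Néron Desingularization whose smooth algebra is ``submersive'' over the space of constrained unknowns — equivalently, smoothing $B\to K[[x]]$ relatively over the filtration $K\langle x_1,\dots,x_{s_1}\rangle\subseteq\cdots\subseteq K\langle x_1,\dots,x_{s_q}\rangle$ rather than absolutely over $K\langle x\rangle$ — which also forces the preliminary elimination of functionally dependent constrained unknowns. This is where the excellence of $K\langle x\rangle$ and careful bookkeeping of the $s_i$ must be invested; the reduction to a polynomial system, the induction on $m$ in the case $s_n<m$, and the passage back to the original system are routine.
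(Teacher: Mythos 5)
Your case $s_n<m$ and the reduction to a polynomial system are acceptable (granting the nontrivial but true fact that the $x_m$-coefficients of an algebraic power series are algebraic), but the case $s_n=m$ is where the theorem actually lives, and there your argument rests entirely on a step you never establish and explicitly defer: the existence of a desingularization $C$ that is ``submersive'' over the constrained unknowns. Everything after ``Granting this'' is conditional on it, so the proof is incomplete at its crucial point. Moreover, the step is problematic for two concrete reasons. First, $K[[x]]$-linear independence of $da_1,\dots,da_q$ in $\Omega_{C/K\langle x\rangle}\otimes_CK[[x]]$ is not what your implicit function theorem argument needs: to get an \emph{invertible} maximal minor you need the $da_i$ to be part of a basis of that free module (a unimodularity condition), which is strictly stronger over a local ring (e.g.\ $da_1=x_1\,dV_1$ is linearly independent but yields no unit minor), and there is no reason it can be arranged — arranging it is essentially a new ``relative'' Néron desingularization over the filtration, which is not a consequence of Theorem \ref{gnd}. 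Second, your ``preliminary elimination'' of a constrained unknown $Y_j$ that is algebraically dependent on the earlier ones removes it from the system, but you still owe a value $y_j\in K\langle x_1,\dots,x_{s_j}\rangle$ at the end; reading $y_j$ off as an algebraic function of $x$ and the surviving unknowns only places it in $K\langle x\rangle$, and once the surviving constrained unknowns are perturbed by truncation there is no reason the resulting $y_j$ lies in $K[[x_1,\dots,x_{s_j}]]$ at all. So the scheme ``truncate the constrained components and solve for the rest'' does not preserve the intermediate nesting levels.

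For comparison, the paper's sketch (Proposition 12 and Lemma \ref{kp}) avoids any such submersivity statement. In the essential two-level case ($\hat y_i\in K[[x_1,\dots,x_s]]$ for $i\le q$, the rest free) one first freezes the constrained components at their formal values and applies the ordinary Artin approximation property of the excellent Henselian mixed ring $K[[x_1,\dots,x_s]]\langle x_{s+1},\dots,x_m\rangle$ to solve for the free unknowns there; then Lemma \ref{kp} descends from $K[[x_1,\dots,x_s]]$ to $K\langle x_1,\dots,x_s\rangle$ by writing everything in an \'etale neighbourhood, expanding in the monomials $x^kT^j$, and applying ordinary Artin approximation over $K\langle x_1,\dots,x_s\rangle$ to the resulting system of coefficient equations. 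If you want to salvage your outline, the missing ingredient is exactly this coefficient-extraction descent, not a strengthened desingularization.
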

\begin{Corollary} The Weierstrass Preparation Theorem holds for the  ring of  algebraic power series  over a field.
\end{Corollary}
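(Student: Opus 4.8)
The plan is to deduce the Weierstrass Preparation Theorem for $K\langle x\rangle$ from its classical formal counterpart over $K[[x]]$ by means of the nested Artin approximation of Theorem \ref{nes}. Concretely, one wants: if $f\in K\langle x\rangle$, $x=(x_1,\ldots,x_m)$, is $x_m$-regular of order $d$, i.e. $f(0,\ldots,0,x_m)=x_m^d\eta(x_m)$ with $\eta(0)\neq 0$, then $f=uP$ for a unique unit $u\in K\langle x\rangle$ and a unique Weierstrass polynomial $P=x_m^d+a_1x_m^{d-1}+\cdots+a_d$ with $a_i\in K\langle x_1,\ldots,x_{m-1}\rangle$, $a_i(0)=0$. (When $K$ is infinite one reduces an arbitrary nonzero $f$ to this case by a linear change of the variables $x$, which preserves algebraicity of power series.)

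First I would record, using $K\langle x\rangle\subseteq K[[x]]$ and the classical Weierstrass Preparation Theorem over $K[[x]]$, a factorization $f=\hat u\big(x_m^d+\sum_{i=1}^d\hat a_ix_m^{d-i}\big)$ with $\hat u\in K[[x]]$ a unit and $\hat a_i\in K[[x_1,\ldots,x_{m-1}]]$, $\hat a_i(0)=0$. Then I would set up the single equation
$$F(x,Y)\ =\ f\ -\ Y_{d+1}\Big(x_m^d+\sum_{i=1}^{d}Y_i x_m^{d-i}\Big)\ \in\ K\langle x,Y\rangle,\qquad Y=(Y_1,\ldots,Y_{d+1}),$$
which lies in $K\langle x,Y\rangle$ because $f\in K\langle x\rangle$. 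The tuple $\hat y=(\hat a_1,\ldots,\hat a_d,\hat u)$ is a solution of $F=0$ in $K[[x]]$, and it is nested for the integers $s_1=\cdots=s_d=m-1\leq s_{d+1}=m$, since $\hat a_i$ involves only $x_1,\ldots,x_{m-1}$ while $\hat u$ may involve all of $x$.

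Next I would apply Theorem \ref{nes} with $c=1$ to obtain a solution $y=(a_1,\ldots,a_d,u)$ of $F=0$ in $A=K\langle x\rangle$ with $a_i\in K\langle x_1,\ldots,x_{m-1}\rangle$ and $y\equiv\hat y$ modulo $(x)K[[x]]$. Comparing constant terms yields $a_i(0)=\hat a_i(0)=0$ and $u(0)=\hat u(0)\neq 0$; since $K\langle x\rangle$ is local with maximal ideal $(x)K\langle x\rangle$ and residue field $K$, this forces $u$ to be a unit, so $P=x_m^d+\sum_{i=1}^d a_i x_m^{d-i}$ is a Weierstrass polynomial and $f=uP$ in $K\langle x\rangle$. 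Uniqueness of $u$ and $P$ is inherited from the uniqueness of the Weierstrass decomposition already inside $K[[x]]$. The Weierstrass Division Theorem for $K\langle x\rangle$ comes out the same way: approximate the formal quotient and the formal remainder, the $d$ coefficients of the latter being nested in $x_1,\ldots,x_{m-1}$.

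The one genuinely essential point — and the reason Artin's Theorem \ref{ar} alone would not suffice — is that the formal Preparation Theorem delivers coefficients $\hat a_i$ depending only on $x_1,\ldots,x_{m-1}$, and this nesting must survive the passage to an algebraic solution; that is precisely what Theorem \ref{nes} guarantees. Once that nested approximation is in hand, the rest (the inclusion $K\langle x\rangle\subseteq K[[x]]$, comparison of constant terms, recognizing units in the local ring $K\langle x\rangle$) is routine, so I do not expect any further obstacle.
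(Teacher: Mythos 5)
Your proposal is correct and follows essentially the same route as the paper: take the formal Weierstrass factorization in $K[[x]]$, encode it as a polynomial system with the coefficients of the distinguished polynomial nested in $x_1,\ldots,x_{m-1}$, apply Theorem \ref{nes}, and conclude the unit property from congruence modulo $(x)$ and uniqueness from the formal statement. The only cosmetic difference is that the paper adds the auxiliary equation $YU-1=0$ to force invertibility of the unit factor, whereas you deduce it directly from $u(0)=\hat u(0)\neq 0$ in the local ring $K\langle x\rangle$; both are fine.
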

\begin{proof} Let $f\in K\langle x\rangle $, $x=(x_1,\ldots,x_m)$ be an algebraic power series such that $f(0,\ldots,0,x_m)\not =0$. By Weierstrass Preparation Theorem $f$ is associated in divisibility with a monic polynomial ${\hat g}=x_m^p+\sum_{i=0}^{p-1} {\hat z}_ix_m^i\in K[[x_1,\ldots,x_{m-1}]][x_m]$ for some $p\in \bf N$,
${\hat z}_i\in (x_1,\ldots,x_{m-1})K[[x_1,\ldots,x_{m-1}]]$. Thus the system $F_1=f-Y(x_m^p+\sum_{i=0}^{p-1} Z_ix_m^i)$, $F_2=YU-1$ has a solution $\hat y$, $\hat u$, ${\hat z}_i$ in $K[[x]]$ such that ${\hat z}_i\in K[[x_1,\ldots,x_{m-1}]]$. By Theorem \ref{nes} there exists a solution $y,u,z_i$ in $K\langle x\rangle$ such that
$z_i\in K\langle x_1,\ldots,x_{m-1}\rangle$ and is congruent modulo $(x)$ with the previous one. Thus $y$ is invertible and $f=yg$, where $g=x_m^p+\sum_{i=0}^{p-1}  z_ix_m^i\in K\langle x_1,\ldots,x_{m-1}\rangle[x_m]$. By the unicity of the (formal) Weierstrass Preparation Theorem it follows that  $y={\hat y}$ and   $g={\hat g}$.
\hfill\ \end{proof}

Now, we see that Theorem \ref{nes} is useful to get algebraic versal deformations (see \cite{A3}) .  Let $D=K\langle Z\rangle$, $A=K\langle T\rangle/J$, $Z=(Z_1,\ldots,Z_s)$, $T=(T_1,\ldots,T_n)$ and $N=D/(f_1,\ldots,f_d)$. A deformation  of $N$ over $A$ is a \\
$P=K\langle T,Z\rangle/(J)\cong ((A\otimes_KD)_{(T,Z)})^h$-module $L$ such that

1) $L\otimes_AK\cong N$,

2) $L$ is flat over $A$,

\noindent where above $B^h$ denotes the Henselization of a local ring $B$.
The condition 1) says that $L$ has the form $P/(F_1,\ldots,F_d)$ with $F_i\in K\langle T,Z\rangle$, $F_i\cong f_i$ modulo $(T)$ and 2) says that

$2')$ $\Tor_1^A(L,K)=0$

\noindent by the Local Flatness Criterion, since $L$ is $(T)$-adically ideal separated because $P$ is local Noetherian. Let
$$P^e\xrightarrow{\nu} P^d\to P\to L\to 0$$
 be part of a free resolution of $L$ over $P$, where the map $P^d\to P$ is given by $(F_1,\ldots,F_d)$.  Then $2')$ says that tensorizing with $K\otimes_A-$ the above sequence  we get an exact  sequence
 $$D^e\to D^d\to D\to N\to 0,$$
  because $P$ is flat over $A$. Therefore, $2')$ is equivalent to

$2'')$ For all $g\in D^d$ with $\sum_{i=1}^d g_if_i=0$ there exists $G\in K\langle T,Z\rangle^d$ with $G\equiv g$ modulo $(T)$ such that $G\ \mbox{modulo}\ J\in \Im \nu$, that is
$\sum_{i=1}^d G_iF_i\in (J)$.

We would like to construct a versal deformation $L$ (see \cite[pages 157-159]{K}), that is for any $A'=K\langle U\rangle/J'$, $U=(U_1,\ldots,U_{n'})$, $P'=((A'\otimes_KD)_{(U,Z)})^h$ and $L'=K\langle U\rangle/(F')$ a deformation of $N$ to $A'$ there exists a morphism $\alpha:A\to A'$ such that $P'\otimes_PL\cong L'$, where the structural map of $P'$ over $P$ is given by $\alpha$. If we replace above the algebraic power series with formal power series then this problem is solved by Schlessinger in the infinitesimal case followed by some theorems of Elkik and M. Artin.  Set $\hat A=K[[T]]/(J)$, $\hat P=((\hat A\otimes_KD)_{(T,Z)})^h$ . We will assume that we have already $L$ such that  $\hat L= \hat P\otimes_PL $ is versal in the frame of complete local rings. How to get the versal property for $L$ in the frame of algebraic power series?

Let $A',P',L'$ be as above. Since $\hat L$ is versal in the frame of complete local rings, there exists $\hat \alpha:\hat A\to \hat A'$ such that $\hat P'\otimes_{ \hat P}\hat L\cong \hat L'=
\hat P'\otimes_{P'}L'$, where the structure of $\hat P'$ as a $\hat P$-algebra is given by $\hat\alpha$. Assume that $\hat \alpha$ is given by $T\to \hat t\in (U)K[[U]]^n$. Then
we have

i) $J(\hat t)\equiv 0$ modulo $(J')$.

On the other hand, we may suppose that $\hat \alpha$ induces an isomorphism $\hat P'\otimes_{ \hat P}\hat L\to \hat L'$ which is given by $(T,Z)\to (\hat t, \hat z)$ for some $\hat z \in (U,Z)K[[U,Z]]^s$ with $\hat z\equiv Z$ modulo $(U,Z)^2$ and the ideals $(F(\hat t,\hat z))$, $(F')$ of $K[[U,Z]]$ coincide. Thus there exists an invertible  $d\times d$-matrix $\hat C= (\hat C_{ij})$ over $K[[U,Z]]$ with

ii) $F'_i=\sum_{j=1}^d \hat C_{ij} F_j(\hat t,\hat z))$.

By Theorem \ref{nes} we may find $t\in (U)K\langle U\rangle^n$ and  $z\in (U,Z)K\langle U,Z\rangle^s$, $C_{ij}\in K\langle U,Z\rangle$ satisfying i), ii) and such that $t\equiv \hat t$, $z\equiv \hat z$, $C_{ij}\equiv \hat C_{ij}$ modulo $(U,Z)^2$. Note that $\det (C_{ij})\equiv \det \hat C$ modulo $(U,Z)^2$ and so $(C_{ij})$ is invertible.
It follows that $\alpha:A\to A'$ given by $T\to t$ is the wanted one, that is  $P'\otimes_{ P} L\cong  L'$, where the structure of $P'$ as a  $ P$-algebra is given by $\alpha$.

Next we give an idea of the proof of Theorem \ref{nes} in  a particular, but essential case.

\begin{Proposition}  Let $K$ be a field,  $ A=K\langle x\rangle$, $x=(x_1,\ldots,x_m)$,  $f=(f_1,\ldots,f_r)\in K\langle x,Y\rangle^r$, $Y=(Y_1,\ldots,Y_n)$  and  $0\leq s\leq m$, $1\leq q\leq n$,  $c$ be some non-negative integers. Suppose that $f$ has a solution ${\hat y}=({\hat y}_1,\ldots,{\hat y}_n)$ in   $ K[[x]] $ such that ${\hat y}_i\in K[[x_1,\ldots,x_{s}]]$ for all $1\leq i\leq q$. Then there exists a solution $y=(y_1,\ldots,y_n)$  of $f$ in $A$ such that $y_i\in K\langle x_1,\ldots,x_{s}\rangle$ for all $1\leq i\leq q$ and $y\equiv{\hat y} \ \ \mbox{mod}\ \ (x)^cK[[x]]$.
\end{Proposition}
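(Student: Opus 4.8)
The plan is to reduce the nested Artin approximation statement to an ordinary (non-nested) application of Artin's Theorem \ref{ar}, by introducing a new system of equations over the smaller ring $K\langle x_1,\dots,x_s\rangle$ whose solutions encode precisely the nested data. The key observation is that the power series $\hat y_1,\dots,\hat y_q$ involve only $x_1,\dots,x_s$, so they live in $\hat B := K[[x_1,\dots,x_s]]$, the completion of $B := K\langle x_1,\dots,x_s\rangle$, while $\hat y_{q+1},\dots,\hat y_n$ live in $\hat A = K[[x]]$. The idea is to set up a single system of polynomial equations, over the ring $B$ rather than over $A$, whose unknowns are the coefficients-to-be-determined, together with auxiliary unknowns for the ``tail'' variables $y_{q+1},\dots,y_n$, and then to invoke the (ordinary) Artin approximation property of $B$, which holds by Theorem \ref{ar}.

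First I would write $A = K\langle x\rangle$ as an algebraic, hence in particular a finite-type-at-each-stage, extension built on $B$: concretely, $A$ is a localization of a smooth $B[x_{s+1},\dots,x_m]$-algebra, or one may work directly with the fact that $\hat A = \hat B[[x_{s+1},\dots,x_m]]$. Next, treating $Y_1,\dots,Y_q$ as honest unknowns over $B$ and $Y_{q+1},\dots,Y_n$ as unknowns that are allowed to involve the remaining variables $x_{s+1},\dots,x_m$, I would encode ``$Y_{q+1},\dots,Y_n\in A$'' by adjoining these $x_{s+1},\dots,x_m$ as new unknowns $X_{s+1},\dots,X_m$ subject to no relations, or better, by viewing the whole target system as a system $\tilde f$ over $B$ in the unknowns $Y_1,\dots,Y_q$ together with a bounded amount of extra data describing the $A$-elements $y_{q+1},\dots,y_n$ as algebraic functions of $x_{s+1},\dots,x_m$ over $B\langle x_{s+1},\dots,x_m\rangle = A$. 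The solution $\hat y$ of $f$ in $\hat A$ then provides a solution of $\tilde f$ in $\hat B$ (for the $Y_1,\dots,Y_q$ part) and in $\hat A$ (for the tail), and one checks that the congruence condition mod $(x)^c$ is preserved. Applying Artin approximation to $B$ for the $Y_1,\dots,Y_q$ part, and using that $A$ already has the Artin approximation property (Theorem \ref{ar}) for the tail relative to the now-fixed values of $y_1,\dots,y_q$, yields $y$ with $y_i\in B = K\langle x_1,\dots,x_s\rangle$ for $i\le q$, $y_i\in A$ for $i>q$, and $y\equiv\hat y$ mod $(x)^c$.

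The main obstacle is the bookkeeping that makes the reduction legitimate: one cannot literally ``solve for $y_1,\dots,y_q$ first over $B$ and then for the rest over $A$'', because the equations $f$ couple all the $Y_i$ together, so a naive two-step procedure need not terminate. The honest approach is to solve the \emph{combined} system in one stroke over $B$ after rewriting the coefficients of $f$, which lie in $K\langle x,Y\rangle$, as elements of $B\langle x_{s+1},\dots,x_m,Y\rangle$, and then to observe that an element of $A$ is precisely an element of the algebraic closure of $B[x_{s+1},\dots,x_m]$ in $\hat B[[x_{s+1},\dots,x_m]]$; thus the existence of $y_{q+1},\dots,y_n\in A$ with the required properties is itself expressible by a system of polynomial equations over $B$ (introducing the minimal polynomials of the $y_i$ over the field of fractions of $B[x_{s+1},\dots,x_m]$ as additional data). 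Once everything is phrased as one finite system over $B$, a single invocation of the Artin approximation property of $B$, together with care that the approximating solution still satisfies the defining equations of $A$ (so that the tail lands in $A$ and not merely in $\hat A$), closes the argument; preserving the $(x)^c$-congruence throughout is then routine since all substitutions are the identity modulo $(x)^2$ on the relevant pieces.
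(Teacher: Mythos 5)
Your overall strategy---encode the tail components $y_{q+1},\ldots,y_n$ by finitely many coefficient-unknowns over the smaller ring and then apply ordinary Artin approximation once---is indeed the idea behind the paper's Lemma \ref{kp}. But your reduction has a genuine gap at its central step. You propose to describe $y_{q+1},\ldots,y_n$ as algebraic functions of $x_{s+1},\ldots,x_m$ (via minimal polynomials, or equivalently via an etale neighborhood) and to read the required ``bounded amount of extra data'' off the given solution. The given tail components $\hat y_{q+1},\ldots,\hat y_n$, however, are arbitrary elements of $K[[x]]$: they are \emph{not} algebraic over $K[[x_1,\ldots,x_s]][x_{s+1},\ldots,x_m]$, so they admit no such finite description, and your combined system over $B=K\langle x_1,\ldots,x_s\rangle$ has no solution in $\hat B=K[[x_1,\ldots,x_s]]$ induced by $\hat y$. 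There is therefore nothing to approximate, and the degree bounds you need to make the enhanced system finite cannot be extracted from the data you have.

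The missing step is a preliminary approximation over the intermediate ring $K[[x_1,\ldots,x_s]]\langle x_{s+1},\ldots,x_m\rangle$. This ring is excellent Henselian, hence has the Artin approximation property by Theorem \ref{main} (note that Theorem \ref{ar} does not suffice here, since the coefficient ring $K[[x_1,\ldots,x_s]]$ is not a field); its completion is $K[[x]]$. Substituting the fixed values $\hat y_1,\ldots,\hat y_q$ into $f$ and approximating the tail $\hat y_{q+1},\ldots,\hat y_n$ in this ring replaces them by components $\tilde y_j$ that \emph{are} algebraic in $x_{s+1},\ldots,x_m$ over $K[[x_1,\ldots,x_s]]$, still congruent to $\hat y_j$ modulo $(x)^c$. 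Only after this step can one write the tail inside an etale neighborhood of $K[[x_1,\ldots,x_s]][x_{s+1},\ldots,x_m]$, turn its (finitely many, truncated) coefficients and those of the defining monic polynomial into unknowns over $K\langle x_1,\ldots,x_s\rangle$, and apply Artin approximation once more---which is precisely what Lemma \ref{kp} does, including the verification that the approximated monic polynomial still defines an etale neighborhood so that the tail lands in $K\langle x\rangle$ and not merely in $K[[x]]$. Your sketch omits both the intermediate ring and this last verification, and the two-step order of approximation cannot be collapsed into one as you suggest.
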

\begin{proof} Note that  $B=K[[x_1,\ldots,x_s]]\langle x_{s+1},\ldots,x_m\rangle$ is excellent Henselian and so it has the Artin approximation property. Thus the system of polynomials \\
$f((\hat y_i)_{1\leq i\leq q}, Y_{q+1},\ldots,Y_n)$ has a solution $(\tilde y_j)_{q<j\leq n}$ in $B$ with $\tilde y_j\equiv \hat y_j$ modulo $(x)^c$. Now it is enough to apply the following lemma for $A=K\langle x_1,\ldots,x_s\rangle$.
\end{proof}
\begin{Lemma}\label{kp} Let $(A,\mm)$ be an excellent Henselian local ring, $\hat A$ its completion, $A[x]^h$, $x=(x_1,\ldots,x_m)$,   ${\hat A}[x]^h$ be the Henselizations of  $A[x]_{(\mm,x)}$ respectively
${\hat A}[x]_{(\mm,x)}$, $f=(f_1,\ldots,f_r)$ a system of polynomials in $Y=(Y_1,\ldots,Y_n)$ over $A[x]^h$ and $1\leq q<n$, $c$ be some positive integers. Suppose that  $f$ has a solution ${\hat y}=({\hat y}_1,\ldots,{\hat y}_n)$ in   ${\hat A}[x]^h$ such that ${\hat y}_i\in {\hat A}$ for all $i\leq q$. Then there exists a solution $y=(y_1,\ldots,y_n)$  of $f$ in $A[x]^h$ such that $y_i\in A$ for all  $i\leq q$ and $y\equiv{\hat y} \ \ \mbox{mod}\ \ \mm^c{\hat A}[x]^h$.
\end{Lemma}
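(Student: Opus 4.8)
The plan is to reduce to a finite‑type situation over $A$ and then to invoke the General Neron Desingularization only in the (already recalled) form that a regular morphism makes the target a filtered inductive limit of smooth algebras; the nested condition will then come essentially for free, because the relevant smooth model can be arranged in a product shape.

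First I would pass from $A[x]^h$ to a finite‑type neighbourhood. Since $A$ is excellent Henselian, $A\to\hat A$ is regular, hence so is $A[x]_{(\mm,x)}\to\hat A[x]_{(\mm,x)}$; these two local rings have the same completion $\hat A[[x]]$, and a routine Artin‑type approximation for étale algebras shows that the essentially étale neighbourhoods of $\hat A[x]_{(\mm,x)}$ of the form $\hat A\otimes_A D$, with $D$ an essentially étale neighbourhood of $A[x]_{(\mm,x)}$, are cofinal. Thus $A[x]^h=\varinjlim_\alpha D^{(\alpha)}$ and $\hat A[x]^h=\varinjlim_\alpha(\hat A\otimes_A D^{(\alpha)})$ over one filtered system, and for $\alpha$ large we may fix $D:=D^{(\alpha)}$ containing all coefficients of $f$ and with $\hat D:=\hat A\otimes_A D$ containing $\hat y_{q+1},\dots,\hat y_n$; note $\hat y_1,\dots,\hat y_q\in\hat A\subseteq\hat D$ and $f(\hat y)=0$ in $\hat D$. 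Because $D$ is an essentially étale neighbourhood with residue field $A/\mm$ and $A$ is Henselian, $D/(x)D=A$, and by flat base change $\hat D/(x)\hat D=\hat A$; this yields compatible retractions $D\to A$ and $\hat D\to\hat A$ of the inclusions.

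Next, since $A\to\hat A$ is regular, $\hat A$ is a filtered inductive limit of smooth $A$‑subalgebras (as recalled after Theorem~\ref{gnd}); write $\hat A=\varinjlim_\lambda A_\lambda$. Then $\hat D=\varinjlim_\lambda(A_\lambda\otimes_A D)$, each $G_\lambda:=A_\lambda\otimes_A D$ being smooth over $D$ (base change of $A_\lambda/A$). Choose $\lambda$ with $\hat y_1,\dots,\hat y_n\in G:=G_\lambda$. Base‑changing $D\to A$ gives a retraction $G\to A_\lambda$ compatible with $\hat D\to\hat A$, so $\hat y_i\in\hat A\cap G=A_\lambda$ for $i\le q$; write $\hat y_i=a_i$, $a_i\in A_\lambda$. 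Now $A_\lambda\hookrightarrow\hat A$ is a morphism from a smooth $A$‑algebra into the completion of the Henselian local ring $A$, so, exactly as in the proof of Theorem~\ref{main} (upgrading the congruence from $\mm$ to $\mm^c$ via Hensel's lemma in $\hat A$ and in $A$), there is an $A$‑algebra map $\psi\colon A_\lambda\to A$ with $\psi(a)\equiv a\ \bmod\ \mm^c\hat A$ for all $a\in A_\lambda$. Finally set $\Phi\colon G\otimes_D A[x]^h=A_\lambda\otimes_A A[x]^h\to A[x]^h$, $a\otimes b\mapsto\psi(a)b$; this is a ring homomorphism restricting to the inclusion $D\hookrightarrow A[x]^h$ on $D$, so it fixes the coefficients of $f$. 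Put $y:=\Phi(\hat y)\in(A[x]^h)^n$. Since $D[Y]/(f)\to G\to G\otimes_D A[x]^h\xrightarrow{\Phi}A[x]^h$ is a $D$‑algebra map taking $Y$ to $y$ and $f(\hat y)=0$ in $G$, we get $f(y)=0$; for $i\le q$, $y_i=\psi(a_i)\in A$; and expanding each $\hat y_j$ in $A_\lambda\otimes_A D$ and using $\psi(a)\equiv a\bmod\mm^c\hat A$ shows $y_j-\hat y_j\in\mm^c\hat A[x]^h$ for all $j$. This is the desired solution.

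The main obstacle is the first reduction: realizing the given formal solution inside a neighbourhood of the product shape $\hat A\otimes_A D$, i.e. descending the étale data attached to $\hat y_{q+1},\dots,\hat y_n$ from $\hat A[x]$ to $A[x]$ while keeping $\hat y_1,\dots,\hat y_q$ ``constant in $x$''. Once this is done, the delicate nested bookkeeping disappears: the model $G=A_\lambda\otimes_A D$ is simultaneously smooth over $D$ — so base change to $A[x]^h$ stays smooth — and compatible with reduction modulo $x$, so the lift $\Phi=\psi\otimes\mathrm{id}$ automatically sends the first $q$ components into $A$, and the congruence modulo $\mm^c$ is inherited directly from $\psi$.
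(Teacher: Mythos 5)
There is a genuine gap, and it sits exactly where you yourself locate ``the main obstacle'': the claim that the \'etale neighbourhoods of ${\hat A}[x]_{(\mm,x)}$ of the product shape $\hat A\otimes_A D$, with $D$ an \'etale neighbourhood of $A[x]_{(\mm,x)}$, are cofinal. This is not a ``routine Artin-type approximation'': cofinality would mean that every \'etale neighbourhood $B\cong ({\hat A}[x,T]/(\hat g))_{(\mm,x,T)}$ admits a \emph{morphism of \'etale neighbourhoods} $B\to \hat A\otimes_A D$, equivalently that ${\hat A}[x]^h$ is a localization of $\hat A\otimes_A A[x]^h$, i.e.\ that Henselization commutes with the base change $A\to\hat A$ in this strong sense. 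Approximating the coefficients of $\hat g$ by elements of $A$ produces an \'etale neighbourhood $D$ of $A[x]_{(\mm,x)}$, but it produces no map from $B$ to $\hat A\otimes_A D$ (the two defining polynomials are merely congruent, not equal), so cofinality does not follow; and the assertion ``every $\hat y_i\in{\hat A}[x]^h$ lies in some $\hat A\otimes_A D$'' is essentially a reformulation of the nested approximation statement you are trying to prove. Since $\hat g$ genuinely has coefficients in $\hat A[x]$ that need not come from $A[x]$, you cannot assume the formal solution lives in a neighbourhood of product shape. (The second half of your argument --- writing $\hat A=\varinjlim A_\lambda$ with $A_\lambda$ smooth over $A$, retracting modulo $(x)$ to see $\hat y_i\in A_\lambda$ for $i\le q$, and lifting $A_\lambda\to A$ modulo $\mm^c$ --- is the standard mechanism and is fine once the reduction is granted.)

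The paper circumvents this by \emph{not} descending the \'etale neighbourhood $B$ at all. It writes the coefficients $\hat z_{jk}\in\hat A$ of $\hat g$ and the coefficients $\hat y_{ijk}\in\hat A$ of polynomial representatives of $\hat y_{q+1},\dots,\hat y_n$ modulo $\hat g$ as finitely many unknowns, performs a generic division by the monic polynomial $G$ with indeterminate coefficients $Z_{jk}$, and records the conditions $f_p(Y_1,\dots,Y_q,Y^+)\equiv 0 \bmod G$ as a finite polynomial system $(F_{pjk})$ over $A$ in the unknowns $Y_1,\dots,Y_q,(Y_{ijk}),(Z_{jk})$. A single application of the Artin approximation property of $A$ then yields \emph{simultaneously} a new \'etale neighbourhood $B'=(A[x,T]/(g))_{(\mm,x,T)}$ of $A[x]_{(\mm,x)}$ (not a descent of $B$, and with no map from $B$) and a solution of $f$ inside it, with $y_i\in A$ for $i\le q$ automatically. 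This simultaneous approximation of the \'etale structure together with the solution is the idea missing from your proposal; without it, the first reduction does not go through.
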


\begin{proof}
 ${\hat A}[x]^h$ is a union of etale neighborhoods of ${\hat A}[x]$.
 Take an etale neighborhood $B$ of ${\hat A}[x]_{(\mm,x)}$ such that ${\hat y}_i\in B$ for all $q<i\leq n$. Then $B\cong ({\hat A}[x,T]/({\hat g}))_{(\mm,x,T)}$
for some monic polynomial $\hat g$ in $T$ over ${\hat A}[x]$ with ${\hat g}(0)\in (\mm,x)$ and $\partial {\hat g}/\partial T(0)\not \in (\mm,x)$, let us say
$${\hat g}=T^e+\sum_{j=0}^{e-1}(\sum_{k\in {\bf N}^m, |k|<u} {\hat z}_{jk}x^k)T^j,$$
 for some $u$ high enough and
${\hat z}_{jk}\in {\hat A}$. Note that ${\hat z}_{00}\in \mm {\hat A}$ and ${\hat z}_{10}\not \in \mm {\hat A}$.
 Changing if necessary $u$, we may suppose that
 $${\hat y}_i\equiv \sum_{j=0}^{e-1}(\sum_{k\in {\bf N}^m, |k|<u}{\hat y}_{ijk}x^k)T^j\ \ \mbox{mod}\ \ {\hat g}$$
  for some ${\hat y}_{ijk}\in {\hat A}$, $q<i\leq n$. Actually, we should take ${\hat y}_i$ as a fraction but for an easier expression we will skip the denominator.
  Substitute $Y_i$, $q<i\leq n$ by
  $$Y_i^+=\sum_{j=0}^{e-1} (\sum_{k\in {\bf N}^m, |k|<u} Y_{ijk}x^k)T^j$$
   in $f$ and divide by the monic polynomial
   $$G=T^e+\sum_{j=0}^{e-1} (\sum_{k\in {\bf N}^m, |k|<u} Z_{jk}x^k)T^j$$
    in ${\hat A}[x,T,Y_1,\ldots,Y_t,(Y_{ij}), (Z_j)]$, where
$(Y_{ijk}), (Z_{jk})$ are new variables.

We get $$f_p(Y_1,\ldots,Y_q,Y^+)\equiv \sum_{j=0}^{e-1} (\sum_{k\in {\bf N}^m, |k|<u} F_{pjk}(Y_1,\ldots,Y_q, (Y_{ijk}), (Z_{jk}))x^kT^j\ \ \mbox{mod}\ \ G,$$
$1\leq p\leq r$. Then $\hat y$ is a solution of $f$ in $B$ if and only if $({\hat y}_1,\ldots,{\hat y}_q,({\hat y}_{ijk}),({\hat z}_{jk}))$ is a solution of $(F_{pjk})$ in ${\hat A}$. As $A$ has the Artin  approximation property we may choose a solution $(y_1,\ldots,y_q, (y_{ijk}),(z_{jk}))$ of $(F_{pjk})$ in $A$
which coincides modulo $\mm^c{\hat A}$ with the former one. Then $$y_i= \sum_{j=0}^{e-1} (\sum_{k\in {\bf N}^m, |k|<u}(y_{ijk}))x^kT^j,$$
 $q<i\leq n$ together with $y_i$, $1\leq i\leq q$ form a solution of $f$ in the etale neighborhood
$B'=(A[x,T]/(g))_{(\mm,x,T)}$,
$$g= T^e+\sum_{i=0}^{e-1} (\sum_{k\in {\bf N}^m, |k|<u} z_{jk}x^k)T^j$$
 of $A[x]_{(\mm,x)}$,
 which is contained in
$A[x]^h$. Clearly, $y$ is the wanted solution.
\end{proof}

\vskip 0.5 cm

\section{Applications to the Bass-Quillen Conjecture}
\vskip 0.5 cm

Let $R[T]$, $T=(T_1,\ldots,T_n)$ be a polynomial algebra in $T$ over a regular local ring $(R,\mm)$. An extension of Serre's Problem proved by Quillen and Suslin is the following

\begin{Conjecture} [Bass-Quillen] \label{bq} Every finitely generated projective module over $R[T]$ is free.
\end{Conjecture}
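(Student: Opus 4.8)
This is a celebrated conjecture which is still open in general; what the methods surveyed here actually deliver is a proof whenever $R$ contains a field (Lindel and Popescu) and in the unramified mixed characteristic case, and it is that route I would take. The starting point is the Quillen--Suslin machinery: by Quillen's patching theorem a finitely generated projective $R[T]$-module $P$ is free as soon as it is extended from $R$, and by the affine Horrocks theorem together with induction on the number of variables the whole question reduces to the one-variable situation over $R[T_1,\dots,T_{n-1}]$. This step is formal and valid over any commutative base; the real content is to control the base ring $R$.

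The second ingredient is Lindel's theorem: if $(R,\mm)$ is a regular local ring essentially of finite type over a field $k$ --- equivalently, a localization of a smooth $k$-algebra --- then every finitely generated projective $R[T]$-module is free. Its proof rests on Lindel's presentation lemma, which, after a harmless localization, produces an \'etale map $k[y_1,\dots,y_d]\to R$ with enough control on the residue field extension, so that the projective module over $R[T]$ is traced back to one over a polynomial ring over a field and Quillen--Suslin applies.

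Here the General Neron Desingularization enters, and this is where I would invoke Theorem \ref{gnd}. Let $(R,\mm)$ be an arbitrary regular local ring containing a field and let $k$ be its prime field; since $k$ is perfect, the inclusion $k\to R$ is a regular morphism. Write $R=\varinjlim_\alpha B_\alpha$ as the filtered union of its finitely generated $k$-subalgebras. A finitely generated projective $R[T]$-module $P$ is finitely presented, hence descends to a finitely generated projective $B_\alpha[T]$-module $P_\alpha$ for some $\alpha$. By Theorem \ref{gnd} the $k$-morphism $B_\alpha\to R$ factors through a smooth $k$-algebra $C$, which is therefore regular; localizing $C$ at the contraction of $\mm$ gives a regular local ring essentially of finite type over $k$ through which $B_\alpha\to R$ still factors. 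Base changing $P_\alpha$ to that ring yields a finitely generated projective module, free by Lindel's theorem; base changing back up to $R[T]$ shows that $P$ is free. The unramified mixed characteristic case is identical, with $k$ replaced by $\ZZ$ --- one checks that $\ZZ\to R$ is a regular morphism precisely when $R$ is unramified, so that Theorem \ref{gnd} again applies.

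I expect the genuine obstacle to be the ramified mixed characteristic case. There one could still try to approximate $R$ by rings essentially of finite type over a subring, but the available regular subring is a ramified discrete valuation ring (or $\ZZ_{(p)}$), the morphism from it into $R$ need not be regular, and even when one can factor through something smooth over that base, no analogue of Lindel's presentation lemma is available because the base is not smooth over a field; the reduction to Quillen--Suslin then collapses. A secondary point, routine but to be checked, is that in $R=\varinjlim_\alpha B_\alpha$ the transition maps and the factorizations can be arranged through local homomorphisms, so that projectivity genuinely descends to a finite level and freeness ascends back to $R[T]$ without loss.
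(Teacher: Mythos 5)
You correctly recognize that this is an open conjecture with no complete proof available, and your route to the known cases --- write $R$ as a filtered inductive limit of regular local rings essentially of finite type over the prime ring via Theorem \ref{gnd}, descend the finitely presented projective module to a finite stage, apply Lindel, and extend back up --- is exactly the paper's argument (the discussion after Question \ref{s}, Theorem \ref{sp} and Corollary \ref{p3}). The one point to make explicit in your mixed characteristic case is that Lindel's theorem as stated is over a field, so you need Swan's observation that Lindel's proof also works for rings essentially of finite type over a DVR whose local parameter is not in $\mm^2$.
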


\begin{Theorem} [Lindel \cite{Li}]\label{l} The Bass-Quillen Conjecture holds if $R$ is essentially of finite type over a field.
\end{Theorem}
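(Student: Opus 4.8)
The plan is to run the Quillen--Suslin machinery for $R[T]$ and to invoke Theorem \ref{gnd} precisely at the step where the classical argument would need $R$ to be \emph{geometrically} regular over the ground field rather than merely regular.

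First I would apply Quillen's patching theorem to reduce the assertion to the statement that every finitely generated projective $R[T]$-module $P$ is extended from $R$; since $R$ is local this yields at once that $P$ is free. The geometric base case I would take for granted is the following: if $A$ is essentially smooth over a field $K$, then every finitely generated projective $A[T]$-module is extended from $A$. This rests on Lindel's lemma \cite{Li}, which presents the local rings of such an $A$ as \'etale neighbourhoods of localized polynomial rings $K[Y_1,\ldots,Y_d]$, together with Quillen patching and the Quillen--Suslin solution of Serre's problem over $K[Y]$.

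The crux is to bridge the gap between ``regular and essentially of finite type over a field'' and ``essentially smooth over a field'', a gap that opens only over imperfect fields, and this is where Theorem \ref{gnd} enters. Since $R$ is essentially of finite type over a field it is excellent, so $R\to\hat R$ is regular; being equicharacteristic, $\hat R$ contains a coefficient field $K$, and being regular $\hat R\iso K[[x_1,\ldots,x_d]]$ by Cohen's theorem. The completion map $K[x]_{(x)}\to K[[x]]$ is again regular, so by Theorem \ref{gnd} the ring $K[[x]]$ is a filtered inductive limit of smooth $K[x]_{(x)}$-algebras, hence of $K$-algebras that are essentially smooth over $K$. A finitely generated projective module over $\hat R[T]$ is finitely presented, so it descends to a finitely generated projective $C_i[T]$-module for one of these algebras $C_i$, where the geometric base case applies; thus every finitely generated projective $\hat R[T]$-module is extended from $\hat R$, i.e.\ free.

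It remains to descend from $\hat R$ back to $R$, and this I expect to be the main obstacle. I would factor $R\to\hat R$ through the henselization $R^h$ (both maps regular), and apply Theorem \ref{gnd} once more to write $\hat R$ as a filtered inductive limit of smooth $R^h$-algebras $C_i$. Given a finitely generated projective $P$ over $R[T]$, the module $\hat R\otimes_RP$ is free over $\hat R[T]$ by the previous paragraph, hence already free over $C_i[T]$ for some $i$; because $R^h$ is Henselian and $C_i$ is smooth over $R^h$, the residue-field point of $C_i$ coming from $C_i\to\hat R$ lifts to an $R^h$-algebra retraction $C_i\to R^h$, and pulling back the free $C_i[T]$-module along it shows $R^h\otimes_RP$ is extended from $R^h$. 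Finally, since $R^h$ is a filtered inductive limit of \'etale neighbourhoods $R'_j$ of $R$, the module $R'_j\otimes_RP$ is extended from $R'_j$ for some $j$, and one must invoke \'etale descent of the property ``extended from the base'' --- the delicate point already present in Lindel's original argument --- to conclude that $P$ itself is extended from $R$, hence free. Note that naive faithfully flat descent is useless here, since freeness of modules over the non-local rings $R[T]$, $\hat R[T]$ does not descend along $R[T]\to\hat R[T]$; it is the smoothness and the limit structure supplied by Theorem \ref{gnd}, combined with Henselianity, that make the descent work.
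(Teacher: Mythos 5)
First, note that the paper offers no proof of this statement: Theorem \ref{l} is quoted from Lindel \cite{Li} and serves in Section 2 as the base case which, combined with Theorem \ref{gnd}, yields Corollary \ref{p3}. Your proposal inverts that architecture by using Theorem \ref{gnd} to prove Lindel; this is not automatically circular, since your base case is only the essentially smooth case, but it puts all the weight on the reduction steps, and the last one has a genuine gap. What your argument actually delivers is that $\hat R[T]\otimes_{R[T]}P$, hence $R^h[T]\otimes_{R[T]}P$, hence $R'_j[T]\otimes_{R[T]}P$ for some \'etale neighbourhood $R'_j$ of $R$, is free. To get back to $R$ you invoke ``\'etale descent of the property extended from the base''. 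No such descent theorem exists: Quillen patching localizes Zariski-locally on the base, not along \'etale covers, and, as you yourself observe, faithfully flat descent of freeness fails for modules over the non-local ring $R[T]$. The descent Lindel actually performs is not formal. His presentation lemma produces a very particular subring $B\subset R$ (a localized polynomial ring) together with an element $h\in \mm_B$ for which $B/hB\cong R/hR$, and it is the resulting Milnor patching square that transports extendedness across the \'etale extension. The \'etale neighbourhoods $R'_j$ arising from writing $R^h$ as a filtered limit carry no such conductor structure, so the step ``free over $R'_j[T]$ implies free over $R[T]$'' is exactly as hard as the theorem you are trying to prove. In short, your argument establishes Bass--Quillen for $\hat R$ and for $R^h$, but not for $R$.

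The detour through the completion also points the limit the wrong way. The productive use of Theorem \ref{gnd} in this circle of ideas is always ``from below'': present the ring of interest as a filtered inductive limit of good rings, descend the finitely presented projective module to a finite stage, prove freeness there, and let freeness ascend along the limit --- which is precisely how the paper deduces Corollary \ref{p3} from Theorem \ref{l}. For Lindel's theorem itself the reduction to the essentially smooth case needs neither $\hat R$ nor Theorem \ref{gnd}: writing $k$ as the filtered union of its finitely generated subfields $k_\alpha$ presents $R$ as a filtered inductive limit of regular local rings $R_\alpha$ essentially of finite type over $k_\alpha$ (regularity of $R_\alpha$ descends along the flat local map $R_\alpha\to R$); each $R_\alpha$ is then essentially of finite type over the prime field, which is perfect, so regular there means essentially smooth; now descend $P$ to some $R_\alpha[T]$ and apply your base case. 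All the real content of Theorem \ref{l} therefore sits in the essentially smooth case --- Lindel's presentation lemma and the patching --- which your proposal takes as a black box.
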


Swan's unpublished notes on Lindel's paper  (see \cite[Proposition 2.1]{P3}) contain two interesting remarks.

1) Lindel's proof works also when $R$ is essentially of finite type over a DVR $A$ such that its local parameter $p\not \in \mm^2$.

2) The Bass-Quillen Conjecture holds if $(R,\mm)$ is a regular local ring containing a field, or $p=$char $R/\mm\not\in \mm^2$ providing that the following question has a positive answer.

\begin{Question} [Swan]\label{s}  Is a regular local ring  a filtered inductive limit of regular local rings essentially of finite type over $\bf Z$?
 \end{Question}

 Indeed, suppose for example that $R$ contains a field and $R$ is a filtered inductive limit of regular local rings $R_i$ essentially of finite type over a prime field $P$. A finitely generated projective $R[T]$-module $M$ is an extension of a  finitely generated projective $R_i[T]$-module $M_i$ for some $i$, that is $M\cong R[T]\otimes_{R_i[T]}M_i$. By  Theorem \ref{l} we get $M_i$ free and so $M$ is free, too.

\begin{Theorem} [\cite{P3}] \label{sp} Swan's Question \ref{s}
 holds for regular local rings $(R,\mm,k)$ which are in one of the following cases:
\begin{enumerate}
\item{} $R$ contains a field,

\item{} the characteristic $p$ of $k$  is not in $\mm^2$,

\item{} $R$ is excellent Henselian.
\end{enumerate}
\end{Theorem}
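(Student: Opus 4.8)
The plan is to reduce each of the three cases to the following situation: one produces a Noetherian ring $A$ which is a filtered inductive limit $A=\varinjlim_j A_j$ of regular local rings $A_j$ essentially of finite type over $\ZZ$, together with a \emph{regular} homomorphism $A\to R$. Granting this, Theorem \ref{gnd} finishes the proof: for each finite subset $S\subset R$ the finite-type subalgebra $A[S]\subset R$ factors as $A[S]\to C\to R$ with $C$ smooth over $A$, and localizing $C$ at the contraction of $\mm$ yields a regular local ring through which $S$ factors, essentially smooth over $A$; this ring descends to some $A_j$, where (after one further localization) it becomes a regular local ring essentially of finite type over $\ZZ$, an essentially smooth algebra over the regular ring $A_j$ being regular. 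Letting $S$ vary and invoking Theorem \ref{gnd} once more on the kernels of the maps $A[S]\to R$ to make the factorizations compatible presents $R$ as a filtered inductive limit of regular local rings essentially of finite type over $\ZZ$.

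For (1) one takes $A=\ZZ$ when $\QQ\subset R$ (then $R$ is $\ZZ$-flat, the fibres $R/\ell R$ vanish for primes $\ell$, and the generic fibre $R$ is geometrically regular over $\QQ$, which has no inseparable extensions) and $A=\FF_p$ when $\FF_p\subset R$ (its unique fibre $R$ is geometrically regular over the perfect field $\FF_p$). For (2) one has $p\neq 0$, so $R$ is mixed characteristic with $p$ a nonzerodivisor; again $A=\ZZ$ works, since flatness is as before and the fibre over $(p)$ is $R/pR$, a regular local ring because $p\in\mm\setminus\mm^2$, hence a regular ring, and geometrically regular over $\FF_p$. Thus $\ZZ\to R$ is regular in these cases.

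For (3), note that $R$ contains a field if and only if $\hat R$ does if and only if $R$ is equicharacteristic, and that $p\notin\mm^2$ is case (2); so I would assume $R$ mixed characteristic with $p\in\mm^2$ (the ``ramified'' case), where $\ZZ\to R$ is no longer regular and a subtler base is needed. The plan: since $R$ is excellent, $R\to\hat R$ is regular and $\hat R$ is complete regular local, so — when the residue field $k$ is finitely generated over its prime field — Cohen structure theory (together with the uniqueness of Cohen rings up to isomorphism) realizes $\hat R$ as the completion $\hat R_0$ of a regular local ring $R_0$ essentially of finite type over $\ZZ$. Because $R$ has the Artin approximation property (Theorem \ref{main}, as it is excellent Henselian), the defining equations of this presentation can be solved in $R$, producing a local homomorphism $R_0\to R$ inducing an isomorphism $\hat R_0\xrightarrow{\ \sim\ }\hat R$. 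Then $A:=R_0^h$ is excellent, so $R_0^h\to\widehat{R_0^h}=\hat R$ is regular; since $R\to\hat R$ is faithfully flat and regular, faithfully flat descent of regularity gives that $R_0^h\to R$ is regular; and $R_0^h=\varinjlim(\text{\'etale neighbourhoods of }R_0)$ is a filtered inductive limit of regular local rings essentially of finite type over $\ZZ$. So the reduction of the first paragraph applies with $A=R_0^h$. For an arbitrary residue field one replaces $R_0^h$ by a directed family of such Henselizations indexed by the finitely generated subfields of $k$, exploiting that every field of characteristic $p$ is geometrically regular over $\FF_p$ so that the relevant ``unramified'' enlargements stay regular.

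I expect the main obstacle to be precisely this last step: organizing the directed family of finite-type models of $\hat R$ and their ramified coefficient structure compatibly with a $p$-basis of $k$, so that each $R_{0,i}^h\to R$ still has geometrically regular fibres and the images of the $R_{0,i}^h$ exhaust $R$. This is where excellence (through the regularity of the completion maps), Henselianity (so that the Henselization of a model lands inside $R$), the Artin approximation property, and the General N\'eron Desingularization must all be combined. The equicharacteristic and unramified mixed-characteristic parts of (3) reduce at once to (1) and (2), and the remaining bookkeeping is formal.
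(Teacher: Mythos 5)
Your treatment of cases (1) and (2) is correct and is essentially the paper's argument: the prime ring (${\bf Z}$, or equivalently the prime field resp.\ ${\bf Z}_{(p)}$) maps regularly into $R$ because it is flat with fibres that are regular rings over perfect fields, and Theorem \ref{gnd} then exhibits $R$ as a filtered inductive limit of smooth, hence regular, algebras of finite type over ${\bf Z}$, whose suitable localizations are the required regular local rings essentially of finite type over ${\bf Z}$.

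The gap is in case (3), and it is more serious than the ``bookkeeping'' you defer at the end. Your strategy requires a regular local ring $R_0$ essentially of finite type over ${\bf Z}$ with $\hat R_0\cong \hat R$. Such an $R_0$ \emph{cannot exist} unless $k$ is finitely generated over its prime field, since the residue field of a localization of a finite type ${\bf Z}$-algebra is always a finitely generated field; and even for $k={\bf F}_p$ the existence of an algebraic model of a ramified $\hat R\cong C[[x_1,\dots,x_d]]/(f)$ is unjustified, because the Eisenstein-type element $f$ has arbitrary power series coefficients while the fraction field of any candidate $R_0$ must be finitely generated over ${\bf Q}$. Your proposed repair --- a directed family of Henselizations indexed by the finitely generated subfields of $k$ --- is precisely the missing proof: you never verify that its union maps to $R$ with geometrically regular fibres, nor that it is cofinal. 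The paper avoids algebraizing $\hat R$ altogether. For complete $R$ it applies case (2) not to $R$ but to the \emph{unramified} Cohen presentation ring $A={\bf Z}_{(p)}[[x_1,\dots,x_m]]$ (where $p\notin\mm_A^2$), writes $R=A/(x)$ with $x$ part of a regular system of parameters, lifts $x$ to $x'$ in some term $A_i$ of the filtered limit $A=\varinjlim A_i$ furnished by case (2), and obtains $R=\varinjlim_{t\geq j}A_t/(x')$, each $A_t/(x')$ being regular local and essentially of finite type over ${\bf Z}$. For excellent Henselian $R$ it then invokes the criterion of \cite{AD}, \cite{S}: it suffices to factor each finite type ${\bf Z}$-subalgebra $E\subset R$ through a regular local ring essentially of finite type over ${\bf Z}$. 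Such a factorization of $E\to\hat R$ exists by the complete case, is encoded by finitely many polynomial equations over $R$ admitting a solution in $\hat R$, and is therefore pulled back into $R$ by the Artin approximation property (Theorem \ref{main}). No single regular base morphism $A\to R$, and no algebraic model of $\hat R$, is needed.
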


\begin{proof}
(1) Suppose that $R$ contains a field $k$. We may assume that $k$ is the prime field of $R$ and so a perfect field. Then the inclusion $u:k\to R$ is  regular  and
by Theorem \ref{gnd} it is a filtered inductive limit of smooth $k$ morphisms $k\to R_i$. Thus $R_i$ is a regular ring  of finite type over $k$ and so over $\bf Z$. Therefore  $R$ is a filtered inductive limit of regular local rings essentially of finite type over $\bf Z$. Similarly we may treat (2).

(3) First assume that $R$ is complete. By the Cohen Structure Theorem we may also assume that $R$ is a factor of a complete local ring of type $A={\bf Z}_{(p)}[[x_1,\ldots,x_m]]$ for some prime integer $p$. By  (2) we see that $A$ is a filtered inductive limit of regular local rings $A_i$ essentially of finite type over $\bf Z$. Since $R, A$ are regular local rings we see that $R=A/(x)$ for a part $x$ of a regular system of parameters of $A$. Then there exists a system of elements $x'$ of a certain $A_i$ which is mapped into $x$ by the limit map $A_i\to A$. It follows that $x'$ is  part of a regular system of parameters of $A_t$ for all $t\geq j$ for some $j>i$ and so $R_t=A_t/(x')$ are regular local rings. Now, it is enough to see that  $R$ is a filtered inductive limit of $R_t$, $t\geq j$.

Next assume that $R$ is excellent Henselian and let $\hat R$ be its completion. Using \cite{AD}, or \cite{S} it is enough to show that given a finite type $\bf Z$-subalgebra $E$ of $R$ the inclusion $\alpha:E\to R$ factors through a regular local ring $E'$ essentially of finite type over $\bf Z$, that is there exists $\beta:E'\to R$ such that $\alpha$ is the composite map $E\to E'\xrightarrow{\beta} R$.

As above, $\hat R$ is a filtered inductive limit of regular local rings and so the composite map ${\hat \alpha}:E\xrightarrow{\alpha} R\to \hat R$ factors through a regular local ring $F$ essentially of finite type over $\bf Z$. We may choose a finite type $\bf Z$-subalgebra $D\subset F$ such that $F\cong D_q$ for some $q\in \Spec D$ and the map $E\to F$ factors through $D$, i.e. $D$ is an $E$-algebra. As $D$ is excellent, its regular locus Reg $D$ is open and so there exists $d\in D\setminus q$ such that $D_d$ is a regular ring. Changing $D$ by $D_d$ we may assume that $D$ is regular.

Let $E={\bf Z}[b_1,\ldots,b_n]$ for some $b_i\in E\subset R$ and let $D={\bf Z}[Y]/(h)$, $Y=(Y_1,\ldots,Y_n)$ for some polynomials $h$. Since $D$ is an $E$-algebra we may write $b_i\equiv P_i(Y)\  \mbox{modulo}\ h$, $i=1,\ldots,n$ for some polynomials $P_i\in  E[Y]\subset R[Y] $. Note that there exists ${\hat y}\in {\hat R}^n$ such that $b_i=P_i({\hat y})$, $h({\hat y})=0$ because $\hat \alpha$ factors through  $D$. As $R$ has the  Artin approximation property, by Theorem \ref{main} there exists $y\in R^n$ such that $b_i=P_i(y)$, $h(y)=0$. Let $\rho:D\to R$ be the map given by $Y\to y$. Clearly, $\alpha$ factors through $D$ and we may take $E'=D_{\rho^{-1}\mm}$. More precisely, we have the following diagram which is commutative except in the right square,

  $$
  \begin{xy}\xymatrix{E \ar[dr] \ar[r]^{\alpha} & R \ar[r] & \hat R\\
  & D \ar[u]_{\rho} \ar[r]& F \ar[u]}
  \end{xy}
  $$

\hfill\  \end{proof}

\begin{Corollary} [\cite{P3}]\label{p3} The Bass-Quillen Conjecture holds if $R$ is a regular local ring in one of the cases (1), (2) of the above theorem.
\end{Corollary}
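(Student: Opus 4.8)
The plan is to deduce the corollary from Theorem \ref{sp} exactly as indicated in the discussion following Question \ref{s}, using Lindel's Theorem \ref{l} together with Swan's first remark after it. First I would record, in both cases, a presentation of $R$ as a filtered inductive limit of suitable regular local rings. In case (1), where $R$ contains a field, the proof of Theorem \ref{sp}(1) already gives $R=\varinjlim_i R_i$ with each $R_i$ smooth over the prime field $k$ of $R$, hence a regular local ring essentially of finite type over a field. In case (2), where $p=\chara R/\mm\notin\mm^2$, I would run the same argument over the excellent discrete valuation ring $\ZZ_{(p)}$ in place of $k$: since $R$ is a regular local domain on which $p$ is a nonzerodivisor, $\ZZ_{(p)}\to R$ is flat, its generic fibre is a localization of a $\QQ$-algebra and hence geometrically regular, and its closed fibre $R/pR$ is regular because $p\notin\mm^2$ and geometrically regular over the perfect field $\FF_p$; thus $\ZZ_{(p)}\to R$ is regular and Theorem \ref{gnd} presents $R$ as a filtered inductive limit of essentially smooth, hence regular local, $\ZZ_{(p)}$-algebras $R_i$, in which $p$ is still part of a regular system of parameters because $R_i/pR_i$ is smooth over $\FF_p$.

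Next, given a finitely generated projective $R[T]$-module $M$, I would use that $R[T]=\varinjlim_i R_i[T]$ and that $M$ is finitely presented to descend $M$ to a finitely generated $R_i[T]$-module $M_i$ for some index $i$, with $M\cong R[T]\otimes_{R_i[T]}M_i$; after enlarging $i$ one may assume that $M_i$ is projective, since a splitting of a finite presentation of a finitely presented module over a filtered colimit of Noetherian rings already exists at a finite stage. In case (1), Lindel's Theorem \ref{l} shows that $M_i$ is free over $R_i[T]$. In case (2), $R_i$ is a regular local ring essentially of finite type over the discrete valuation ring $\ZZ_{(p)}$ whose parameter $p$ lies outside the square of the maximal ideal of $R_i$, so Swan's strengthening of Lindel's theorem (the first remark after Theorem \ref{l}) applies and again $M_i$ is free. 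Extending scalars along $R_i[T]\to R[T]$ then yields that $M$ is free, which is the assertion of the corollary.

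The step I expect to demand the most attention is the descent of $M$ to a finite stage together with the bookkeeping needed in case (2) to guarantee that the relevant $R_i$ really is essentially of finite type over $\ZZ_{(p)}$ with $p$ not in the square of its maximal ideal, so that Swan's form of Lindel's theorem genuinely applies; the remaining arguments are routine filtered-colimit manipulations, and the substantive input is already contained in Theorems \ref{gnd}, \ref{sp}, and \ref{l}.
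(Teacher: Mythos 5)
Your argument is correct and is essentially the paper's own (only sketched there, in the discussion after Question \ref{s} together with Swan's two remarks following Theorem \ref{l}): present $R$ as a filtered inductive limit of regular local rings essentially of finite type over the prime field, resp.\ over $\ZZ_{(p)}$ with $p$ outside the square of the maximal ideal, descend the projective module to a finite stage, and apply Lindel's theorem, resp.\ Swan's strengthening of it. Your extra bookkeeping in case (2) — checking that $\ZZ_{(p)}\to R$ is regular and that $p$ stays outside $\mm_i^2$ because $R_i/pR_i$ is smooth over $\FF_p$ — is exactly the content the paper compresses into ``Similarly we may treat (2).''
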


\begin{Remark} Theorem \ref{sp} is not a complete answer to  Question \ref{s}, but (3) says that a positive answer is expected in general. Since there exists no  result similar to Lindel's saying that the Bass-Quillen Conjecture holds for all regular local rings essentially of finite type over $\bf Z$ we decided to wait with our further research. So  we have waited  already 25 years.
\end{Remark}

Another problem is to replace in the Bass-Quillen Conjecture the polynomial algebra $R[T]$ by other $R$-algebras. The tool is given by   the following theorem.

\begin{Theorem} [Vorst \cite{V}]\label{v} Let $A$ be a ring, $A[x]$, $x=(x_1,\ldots,x_m)$ a polynomial algebra, $I\subset A[x]$ a monomial ideal and $B=A[x]/I$. Then every finitely generated projective $B$-module $M$ is extended  from a finitely generated projective $A$-module $N$, that is $M\cong B\otimes_AN$, if for all $n\in \bf N$ every  finitely generated projective $A[T]$-module, $T=(T_1,\ldots,T_n)$ is extended from a  finitely generated projective $A$-module.
\end{Theorem}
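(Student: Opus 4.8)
The plan is to argue by induction on the number $m$ of variables, proving the formally stronger assertion that for every ring $A$ satisfying the hypothesis, every monomial ideal $I\subset A[x_1,\dots,x_m]$ and every $j\ge 0$, each finitely generated projective module over $\bigl(A[x_1,\dots,x_m]/I\bigr)[T_1,\dots,T_j]$ is extended from $A$; for $m=0$ this is the hypothesis. For the inductive step I would single out the last variable $x_m$. Since a monomial ideal has a finite monomial generating set (Dickson's lemma), with bounded $x_m$-exponents, a short computation shows that $(I:x_m^{\infty})=\bigcup_{\ell\ge 0}(I:x_m^{\ell})$ is ``cylindrical'' in $x_m$, i.e.\ $(I:x_m^{\infty})=J\cdot A[x_1,\dots,x_m]$ for a monomial ideal $J\subset A[x_1,\dots,x_{m-1}]$, so that $A[x_1,\dots,x_m]/(I:x_m^{\infty})\cong D[x_m]$ with $D:=A[x_1,\dots,x_{m-1}]/J$, and that
\[
I=(I:x_m^{\infty})\cap\bigl(I+(x_m^{k})\bigr)\qquad\text{for all }k\gg 0 .
\]
As $(I:x_m^{\infty})+\bigl(I+(x_m^{k})\bigr)=(I:x_m^{\infty})+(x_m^{k})$, this gives a Milnor square
\[
B\ \cong\ D[x_m]\ \times_{\,D[x_m]/(x_m^{k})\,}\ B/(x_m^{k}),
\]
both legs of which are surjective, and which stays a Milnor square after the flat base change $A\to A[T_1,\dots,T_j]$.

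Next I would analyse the three corner rings; abbreviate $T=(T_1,\dots,T_j)$. In $B/(x_m^{k})=A[x]/(I+(x_m^{k}))$ and in $D[x_m]/(x_m^{k})$ the variable $x_m$ is nilpotent, so killing it identifies their finitely generated projective modules with those over $A[x_1,\dots,x_{m-1}]/\bar I$ (where $\bar I$ is the image of $I$ under $x_m\mapsto 0$) and over $D$, respectively; by the lifting of idempotents along a nilpotent ideal this identification respects adjoining polynomial variables and the property of being extended from $A$. Hence, after base change along $A\to A[T]$, each of the three corners is, modulo a nilpotent ideal, of the form $\bigl(A[y_1,\dots,y_{m-1}]/I'\bigr)[\text{extra free variables}]$ for a monomial ideal $I'$ in $m-1$ variables (the extra free variables being $x_m$ and the $T$'s, or just the $T$'s), so by the inductive hypothesis all of their finitely generated projective modules are extended from $A$.

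Now I would patch. Given a finitely generated projective $M$ over $B[T]$, Milnor's description of projectives over a fibre product ring presents $M$ by a triple $(P_1,P_2,\alpha)$, where $P_1,P_2$ are finitely generated projective over the corners $R_1[T]:=D[x_m,T]$ and $R_2[T]:=(B/x_m^{k})[T]$ and $\alpha$ is an isomorphism of their base changes to $R_{12}[T]:=(D[x_m]/x_m^{k})[T]$. By the previous step $P_i\cong R_i[T]\otimes_A N_i$ for finitely generated projective $A$-modules $N_i$; applying to $\alpha$ the $A$-algebra retraction $R_{12}[T]\to A$ (which exists since $J$, $x_m^{k}$ and the $T_j$ all lie in the augmentation ideal) yields $N_1\cong N_2=:N$. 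Fixing these isomorphisms, the triple of $M$ becomes $(R_1[T]\otimes_A N,\ R_2[T]\otimes_A N,\ \alpha')$ for a single automorphism $\alpha'$ of $R_{12}[T]\otimes_A N$, and the problem reduces to showing that $\alpha'$ differs from the canonical gluing of $B[T]\otimes_A N$ by automorphisms induced from $R_1[T]\otimes_A N$ and $R_2[T]\otimes_A N$.

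The hard part will be exactly this last step, and it is where the shape of the Milnor square matters. Modulo $x_m$, the automorphism $\alpha'(\mathrm{can})^{-1}$ of $R_{12}[T]\otimes_A N$ descends to an automorphism of $D[T]\otimes_A N$, which lifts through the retraction $R_1[T]=D[x_m,T]\twoheadrightarrow D[T]$ to an automorphism of $R_1[T]\otimes_A N$; the remaining factor has the form $\mathrm{id}+g$ with $g$ carrying $R_{12}[T]\otimes_A N$ into $x_m\,(R_{12}[T]\otimes_A N)$, and since $R_2[T]\otimes_A N$ is projective and $x_m^{k}=0$ in $R_2[T]$, one can lift $g$ to a nilpotent endomorphism $\tilde g$ of $R_2[T]\otimes_A N$ with image in $x_m\,(R_2[T]\otimes_A N)$, so that $\mathrm{id}+\tilde g$ is an automorphism of $R_2[T]\otimes_A N$ inducing $\mathrm{id}+g$. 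Thus the gluing datum is trivial in the relevant double coset, Milnor patching gives $M\cong B[T]\otimes_A N$, and the induction closes. Organising the square so that $x_m$ becomes nilpotent in two of the three corners is what makes this manipulation possible, and it is also why nothing beyond the stated hypothesis — in particular no $K_1$-type regularity of $A$ — is needed.
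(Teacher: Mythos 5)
The paper does not prove this theorem: it is quoted from Vorst \cite{V} and used as a black box, so there is no in-text argument to compare against. Your proof is sound and is essentially the standard patching argument, close in spirit to Vorst's original one: decompose $I=(I:x_m^\infty)\cap\bigl(I+(x_m^k)\bigr)$, observe that this produces a Milnor (conductor) square whose corners are, modulo nilpotents, monomial quotients in $m-1$ variables with extra free polynomial variables, and close the induction by splitting the gluing automorphism using the section $D[T]\to D[x_m,T]\to D[T]$ on one side and, on the other, the nilpotence of $x_m$ together with projectivity of $R_2[T]\otimes_A N$ to lift $\mathrm{id}+g$ along the surjection $R_2[T]\to R_{12}[T]$. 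The combinatorial claims all check out over an arbitrary base ring $A$, because a monomial ideal is a free $A$-module on the monomials it contains, so colons, intersections and sums of monomial ideals are computed monomial-by-monomial exactly as over a field; in particular $(I:x_m^\infty)$ is generated by the generators of $I$ with their $x_m$-powers stripped, and the displayed intersection holds as soon as $k$ exceeds every $x_m$-exponent occurring in a generator. Two small points deserve an explicit sentence in a written-up version but are not gaps: the degenerate cases where $D=0$ (i.e.\ $x_m^\ell\in I$) or $B=0$, in which the square collapses and the statement reduces to the nilpotent-reduction step; and the bookkeeping verifying that writing $\alpha'(\mathrm{can})^{-1}$ as a product of one automorphism induced from each corner is exactly the double-coset condition in Milnor's classification of patching triples, so that the triple of $M$ is isomorphic to that of $B[T]\otimes_A N$.
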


\begin{Corollary}[\cite{P6}] Let $R$ be a regular local ring in one of the cases (1), (2) of Theorem \ref{sp}, $I\subset R[x]$ be a monomial ideal with $x=(x_1,\ldots,x_m)$ and $B=R[x]/I$. Then any finitely generated projective $B$-module is free.
\end{Corollary}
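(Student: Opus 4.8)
The plan is to invoke Vorst's Theorem \ref{v} with $A=R$. That theorem reduces the statement to the following assertion: for every $n\in{\bf N}$, each finitely generated projective $R[T]$-module with $T=(T_1,\ldots,T_n)$ is extended from a finitely generated projective $R$-module. So the proof splits into verifying this hypothesis and then reading off the conclusion.

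First I would check the reduction hypothesis. Since $R$ is a regular local ring falling under case (1) or (2) of Theorem \ref{sp}, Corollary \ref{p3} tells us that the Bass--Quillen Conjecture holds for $R$, i.e. every finitely generated projective $R[T]$-module $P$ is free. Being free, $P\cong R[T]^k\cong R[T]\otimes_R R^k$, so $P$ is extended from the free (in particular projective) $R$-module $R^k$. Hence the hypothesis of Theorem \ref{v} is satisfied for $A=R$ and all $n$.

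Next, apply Theorem \ref{v} to the monomial ideal $I\subset R[x]$ and $B=R[x]/I$: it yields that any finitely generated projective $B$-module $M$ has the form $M\cong B\otimes_R N$ for some finitely generated projective $R$-module $N$. Since $(R,\mm)$ is local, $N$ is free, say $N\cong R^k$, and therefore $M\cong B\otimes_R R^k\cong B^k$ is free, which is exactly the claim.

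I do not expect any genuine obstacle here once Theorem \ref{v} and Corollary \ref{p3} are available; the only point requiring care is that the hypothesis and conclusion of Theorem \ref{v} are phrased in terms of being ``extended from $R$'' rather than ``free'', so one must use the elementary fact that over the local ring $R$ projective modules are free in order to pass between the two formulations, both when verifying the input to Vorst's theorem and when extracting the final conclusion.
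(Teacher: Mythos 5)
Your proposal is correct and is exactly the paper's argument: the paper proves this corollary in one line by applying Vorst's Theorem \ref{v} together with Corollary \ref{p3}, and you have simply spelled out the routine verification (freeness over the local ring $R$ translates ``extended'' into ``free'' at both ends). No issues.
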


For the proof, apply the above theorem using Corollary \ref{p3}.

The Bass-Quillen Conjecture could also hold when $R$ is not regular as  the following corollary shows.
\begin{Corollary}[\cite{P6}]\label{m2} Let $R$ be a regular local ring  in one of the cases  of Theorem \ref{sp},   $I\subset R[x]$  be a monomial ideal with $x=(x_1,\ldots,x_m)$
and $B=R[x]/I$. Then every finitely generated projective $B[T]$-module, $T=(T_1,\ldots, T_n)$ is free.
\end{Corollary}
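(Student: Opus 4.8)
The plan is to reduce this to Vorst's Theorem \ref{v} and Corollary \ref{p3}, adapting the argument of the preceding corollary but with the variable set enlarged by the $T_i$.

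First I would observe that $B[T]$ is again of the type to which Theorem \ref{v} applies. Writing $x'=(x_1,\ldots,x_m,T_1,\ldots,T_n)$ we have $B[T]\cong R[x']/I'$, where $I'=I\,R[x']$ is the extension of $I$ to the larger polynomial ring over $R$. Since $I$ is generated by monomials in the $x_i$ alone, $I'$ is still a monomial ideal of $R[x']$. Hence $B[T]$ is a quotient of a polynomial ring over the regular local ring $R$ by a monomial ideal, which is exactly the setting of Vorst's theorem with $A=R$.

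Next I would verify the hypothesis of Theorem \ref{v} for $A=R$: namely that for every $n$ each finitely generated projective $R[U]$-module, $U=(U_1,\ldots,U_n)$, is extended from a finitely generated projective $R$-module. This is supplied by Corollary \ref{p3}: in cases (1) and (2) of Theorem \ref{sp} the Bass--Quillen conjecture holds for $R$, so every such module is free, hence of the form $R[U]\otimes_R R^k$ and in particular extended. Theorem \ref{v} then yields that every finitely generated projective $B[T]$-module $M$ is extended from a finitely generated projective $R$-module $N$, i.e.\ $M\cong B[T]\otimes_R N$. Since $R$ is local, $N$ is free, say $N\cong R^k$, and therefore $M\cong B[T]^k$ is free.

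I do not expect a serious obstacle here; the only points requiring care are that adjoining the $T_i$ does not disturb the monomial structure of the ideal (immediate, as the generators involve only the $x_i$) and that the hypothesis of Vorst's theorem for $A=R$ is precisely the Bass--Quillen statement furnished by Corollary \ref{p3}. Note that this route covers cases (1) and (2) of Theorem \ref{sp}; case (3) would instead require Bass--Quillen for excellent Henselian regular local rings, which is not presently available, so the statement should be read with $R$ as in (1) or (2).
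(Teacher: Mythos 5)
Your proof is correct and is essentially the paper's own argument: the paper simply notes that $B[T]=R[x,T]/IR[x,T]$ with $IR[x,T]$ still a monomial ideal and invokes the preceding corollary (i.e.\ Vorst's Theorem~\ref{v} combined with Corollary~\ref{p3}), exactly as you do. Your caveat about case (3) is well taken: the corollary as stated allows any of the cases of Theorem~\ref{sp}, but the route through Corollary~\ref{p3} (and the preceding corollary, which the paper's one-line proof implicitly uses) is only available in cases (1) and (2), since Bass--Quillen for excellent Henselian regular local rings is not established here.
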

 This result holds because $B[T]$ is a factor of $R[x,T]$ by the monomial ideal $IR[x,T]$.

\begin{Remark} If $I$ is not monomial, then the Bass-Quillen Conjecture may fail when replacing $R$ by $B$. Indeed, if $B={\bf R}[x_1,x_2]/(x_1^2-x_2^3)$ then there exist finitely generated projective $B[T]$-modules of rank one which are not free (see \cite[(5.10)]{La}).
\end{Remark}

Now, let $(R,\mm)$ be a regular local ring and $f\in \mm\setminus \mm^2$.

\begin{Question} [Quillen \cite{Q}] \label{q} Is free a  finitely generated projective  module over $R_f$?
\end{Question}

\begin{Theorem} [Bhatwadeckar-Rao, \cite{BR}]\label{br}  Quillen's Question   has a positive answer if $R$ is essentially of finite type over a field.
\end{Theorem}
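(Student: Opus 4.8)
The plan is to follow the proof-pattern of Theorem~\ref{sp}(1): first reduce to the local ring of a smooth affine variety over a perfect field, and then feed that into Lindel's Theorem~\ref{l} by means of an étale neighbourhood construction in Lindel's style, applied to the regular local ring $R$ and the parameter $f$.

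\emph{Reduction to the smooth affine case.} Since $R$ is regular and contains $k$, it contains the prime field $k_0$, which is perfect, so the structure map $k_0\to R$ is a regular morphism. By General Néron Desingularization (Theorem~\ref{gnd}), exactly as in the proof of Theorem~\ref{sp}(1), $R$ is a filtered inductive limit of essentially smooth local $k_0$-algebras $R_i$, each of which is a regular local ring. The element $f$ lies in some $R_i$, and then $f\in\mm_{R_i}\setminus\mm_{R_i}^2$ because $\mm_{R_i}^2\subseteq\mm^2$. A finitely generated projective $R_f$-module is finitely presented, and hence is extended from a finitely generated projective $(R_i)_f$-module for $i$ large enough; so it suffices to treat each $R_i$, and we may assume $R=\OO_{X,x}$ with $X=\Spec\Lambda$, $\Lambda$ a smooth $k_0$-algebra and $d=\dim R$. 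After replacing $\Lambda$ by a localisation avoiding $x$ we may take $f\in\Lambda$ with $f\in\mm_x\setminus\mm_x^2$; since $k_0$ is perfect the differential $df$ is then a nowhere vanishing local section of the locally free sheaf $\Omega_{\Lambda/k_0}$ near $x$, so shrinking $X$ once more we may assume $V(f)\subset X$ is smooth of dimension $d-1$, and in particular $D(f)=\Spec\Lambda_f$ is smooth affine of dimension $d$ and every ring occurring below is regular.

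\emph{Étale neighbourhood and reduction to a Laurent polynomial ring.} Complete $f$ to a regular system of parameters $f=g_1,\dots,g_d$ of $R$ and apply Lindel's construction \cite{Li}: there is a subring $A\subseteq R$, a localisation of a polynomial ring $k_0[x_1,\dots,x_d]$ with $x_1$ chosen so that $x_1\mapsto f$, such that $A\hookrightarrow R$ is a residually trivial local homomorphism making $R$ a localisation of an étale $A$-algebra. Inverting $f=x_1$, the ring $R_f$ becomes a localisation of an étale algebra over $C:=A_{x_1}$, and $C$ is itself a localisation of $B:=k_0[x_1^{\pm1},x_2,\dots,x_d]$. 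Over $B$ every finitely generated projective module is free, by the Quillen--Suslin theorem over the principal ideal domain $k_0[x_1^{\pm1}]$; and $C$, being a localisation of the unique factorisation domain $B$, is a regular domain with $\Cl C=0$ and $K_0(C)=\ZZ$, so that its finitely generated projectives are stably free and are free in the Bass stable range (rank $\ge d$). One then wants to transport freeness from $C$ across the remaining localisation and the étale extension to conclude that every finitely generated projective $R_f$-module is free, by Quillen's local--global patching; the polynomial extensions of regular rings that the patching produces are handled by Lindel's Theorem~\ref{l}.

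\emph{The main obstacle.} The delicate step -- and the technical core of Bhatwadekar--Rao's argument -- is precisely this last transport. Projective modules do not descend along étale morphisms nor survive a localisation for free, and the low-rank range (rank strictly below $d$) is not covered by Bass's cancellation theorem; one must therefore keep track of the relevant unimodular rows and elementary (Mennicke) symbols, using systematically that every ring in sight is regular of dimension at most $d$, so that the patching over $\Spec R$ along the smooth divisor $V(f)$ reduces each piece to a situation governed by Lindel's Theorem~\ref{l}. It is exactly here that the hypothesis ``$R$ essentially of finite type over a field'' enters in an essential way: in its absence one would need the analogue of Lindel's theorem over $\ZZ$ together with a positive answer to Swan's Question~\ref{s}, which is why Quillen's Question is settled only in this generality.
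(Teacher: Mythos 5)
The paper itself contains no proof of Theorem \ref{br}: the statement is imported verbatim from Bhatwadekar--Rao \cite{BR}, and the only place its idea is used is Theorem \ref{p5}, where the case ``$R$ contains a field'' is reduced \emph{to} Theorem \ref{br} by the General N\'eron Desingularization limit argument. Your opening reduction is exactly that argument (the proof-pattern of Theorem \ref{sp}(1)), so what you have actually written is a proof of Theorem \ref{p5} \emph{assuming} Theorem \ref{br} for localizations of smooth algebras over the prime field --- which leaves Theorem \ref{br} itself still to be proved.

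That is where the genuine gap lies. After Lindel's \'etale neighbourhood $A\hookrightarrow R$, with $A$ a localization of $k_0[x_1,\dots,x_d]$ and $x_1\mapsto f$, the entire content of \cite{BR} is the transport of freeness from $C=A_{x_1}$ (more precisely from the Laurent polynomial ring $k_0[x_1^{\pm1},x_2,\dots,x_d]$ and its localizations) across the \'etale extension to $R_f$: this requires a patching argument along the common divisor $V(f)$, Horrocks/Suslin-type results over the Laurent polynomial ring to handle projectives of rank below $d$ (where Bass cancellation is unavailable), and control of the relevant unimodular elements. Your proposal names all of these ingredients but executes none of them; the paragraph you label ``the main obstacle'' is a description of what a proof must accomplish rather than a proof. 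Moreover, the closing appeal to Lindel's Theorem \ref{l} cannot fill the gap: Theorem \ref{l} concerns finitely generated projective modules over the polynomial ring $R[T]$ and gives no handle on modules over $R_f$; the $R_f$-analogue of Lindel's theorem \emph{is} the statement being proved, so at the decisive step the argument is circular. To make this a proof you would have to carry out the patching and the low-rank cancellation over the Laurent polynomial ring explicitly, i.e.\ reproduce the actual argument of \cite{BR}.
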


\begin{Theorem} [\cite{P5}]\label{p5}  Quillen's Question   has a positive answer if $R$ contains a field.
\end{Theorem}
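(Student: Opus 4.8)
The plan is to deduce Theorem~\ref{p5} from the Bhatwadekar--Rao Theorem~\ref{br} by a passage to the limit, the key input being that Swan's Question~\ref{s} has a positive answer in case~(1).

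First I would produce a good limit presentation of $R$. Let $k$ be the prime field of $R$; it is perfect, so $k\to R$ is a regular morphism, and by Theorem~\ref{gnd} it factors through smooth $k$-algebras $C$, with $R=\varinjlim C$ filtered. Localizing each such $C$ at the contraction of $\mm$ we obtain, exactly as in the proof of Theorem~\ref{sp}(1), a presentation $R=\varinjlim_i R_i$ as a filtered inductive limit of regular local rings $R_i$, each essentially of finite type over $k$, in which every structure morphism $\phi_i\colon R_i\to R$ is local; write $\mm_i$ for the maximal ideal of $R_i$.

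Next I would descend the data $(R,f)$ and the module. Pick $i$ and $f_i\in R_i$ with $\phi_i(f_i)=f$. Since $\phi_i$ is local, $f\in\mm$ gives $f_i\in\mm_i$; and $f_i\in\mm_i^2$ would force $f\in\phi_i(\mm_i^2)\subseteq\mm^2$, contrary to hypothesis. Hence $f_i\in\mm_i\setminus\mm_i^2$, and likewise the image $f_j$ of $f_i$ in $R_j$ lies in $\mm_j\setminus\mm_j^2$ for all $j\geq i$. Now let $M$ be a finitely generated projective $R_f$-module. As $R$ is Noetherian, $R_f$ is Noetherian and $M$ is finitely presented; since localization commutes with filtered colimits, $R_f=\varinjlim_{j\geq i}(R_j)_{f_j}$. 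A finitely presented module over such a colimit is extended from a finite stage, and---after enlarging $j$ so that a defining idempotent matrix and its idempotency relation descend---we may assume $M\cong R_f\otimes_{(R_j)_{f_j}}M_j$ for a finitely generated projective $(R_j)_{f_j}$-module $M_j$.

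Finally, $R_j$ is a regular local ring essentially of finite type over the field $k$ and $f_j\in\mm_j\setminus\mm_j^2$, so Theorem~\ref{br} shows that $M_j$ is free over $(R_j)_{f_j}$; hence $M\cong R_f\otimes_{(R_j)_{f_j}}M_j$ is free, as wanted. I expect the only real work to lie in the limit bookkeeping of the third step: checking that the connecting morphisms can be taken local (so that $f$ descends into $\mm_i\setminus\mm_i^2$) and that both ``finitely presented'' and ``finitely generated projective'' descend to a finite stage. This is also precisely where case~(1) of Theorem~\ref{sp} is indispensable: only there are the $R_i$ essentially of finite type over a \emph{field}, which is the hypothesis of Theorem~\ref{br}; over $\ZZ$ no analogue of Bhatwadekar--Rao is known, cf.\ the Remark following Corollary~\ref{p3}.
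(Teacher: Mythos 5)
Your proposal is correct and follows exactly the paper's route: the paper proves Theorem~\ref{p5} by the same limit argument as Corollary~\ref{p3}, writing $R$ as a filtered inductive limit of regular local rings essentially of finite type over a field via Theorem~\ref{sp}(1) and invoking Theorem~\ref{br} at a finite stage. Your write-up merely makes explicit the descent bookkeeping (locality of the structure maps, $f_j\in\mm_j\setminus\mm_j^2$, descent of the finitely generated projective module) that the paper leaves implicit.
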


This goes   similarly to Corollary \ref{p3} using Theorem \ref{br} instead of Theorem \ref{l}.

\begin{Remark} The paper \cite{P5} was not accepted for publication in many journals since  the referees said that "relies on a theorem [that is Theorem \ref{gnd}] which is still not recognized by the mathematical community". Since our paper was quoted as an unpublished preprint in \cite{S}  we published it later in the Romanian Bulletin and  it was noticed and quoted by many people (see for instance \cite{KV}).
\end{Remark}
\vskip 0.5 cm
\section{General Neron Desingularization}
\vskip 0.5 cm
Using Artin's methods from \cite{A}, Ploski gave the following theorem, which is the first form of a possible extension of Neron Desingularization in $\dim >1$.

\begin{Theorem} [\cite{Pl}] \label{pl} Let ${\bf C}\{x\}$, $x=(x_1,\ldots,x_m)$, $f=(f_1,\ldots,f_s)$ be some convergent power series from ${\bf C}\{x,Y\}$, $Y=(Y_1,\ldots,Y_n)$ and $\hat y\in {\bf C}[[x]]^n$ with $\hat y(0)=0$ be a solution of $f=0$. Then the map $v:B={\bf C}\{x,Y\}/(f)\to {\bf C}[[x]]$ given by $Y\to \hat y$ factors through an $A$-algebra of type $B'={\bf C}\{x,Z\}$ for some variables $Z=(Z_1,\ldots,Z_s)$, that is $v$ is a composite map $B\to B'\to {\bf C}[[x]]$.
\end{Theorem}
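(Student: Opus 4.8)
The plan is to translate the desired factorization into the construction of a convergent \emph{family} of solutions and to build that family from Artin's approximation theorem in the analytic category (\cite{A}, the convergent analogue of Theorem~\ref{ar}). First observe that a factorization $v\colon B\to B'={\bf C}\{x,Z\}\to{\bf C}[[x]]$, with $Z=(Z_1,\dots,Z_N)$, is the same thing as a system of convergent series $Y(x,Z)\in{\bf C}\{x,Z\}^n$ with $Y(0,0)=0$ and $f\big(x,Y(x,Z)\big)=0$ identically in ${\bf C}\{x,Z\}$, together with a formal specialization $\hat z\in(x){\bf C}[[x]]^N$ such that $Y(x,\hat z)=\hat y$; then $B\to B'$ sends $Y_i\mapsto Y_i(x,Z)$ and $B'\to{\bf C}[[x]]$ sends $Z_j\mapsto\hat z_j$. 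The number $N$ of new variables is left free, and this freedom is essential.

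I would begin by choosing, for a large integer $c$ to be fixed at the end, a convergent solution $y^0\in{\bf C}\{x\}^n$ with $y^0(0)=0$ and $y^0\equiv\hat y\bmod(x)^c$; such a $y^0$ exists by Artin's theorem over ${\bf C}$ applied to $f$. Passing to the new unknowns $T=Y-y^0$ and to $h(x,T)=f(x,y^0+T)$, one has $h(x,0)=f(x,y^0)=0$, hence $h(x,T)=J(x)\,T+Q(x,T)$ with $J=(\partial f/\partial Y)(x,y^0)$ a convergent $s\times n$ matrix and $Q$ convergent, $Q(x,T)\in(T)^2{\bf C}\{x,T\}^s$; moreover $\hat d:=\hat y-y^0\in(x)^c{\bf C}[[x]]^n$ solves $h=0$, so $J\hat d=-Q(x,\hat d)\in(x)^{2c}{\bf C}[[x]]^s$. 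Applying the Artin--Rees lemma to the submodule $J\,{\bf C}[[x]]^n\subseteq{\bf C}[[x]]^s$, for $c$ larger than some integer $e_0$ depending only on $f$ there is $\hat d''\in(x)^{2c-e_0}{\bf C}[[x]]^n$ with $J\hat d''=J\hat d$; since $J$ has convergent entries and ${\bf C}\{x\}$ is Noetherian with flat completion, $\ker\big(J\colon{\bf C}[[x]]^n\to{\bf C}[[x]]^s\big)$ is generated by finitely many \emph{convergent} syzygies $u_1,\dots,u_r\in{\bf C}\{x\}^n$, whence $\hat d-\hat d''=\sum_k\hat c_ku_k$ for suitable $\hat c_k\in{\bf C}[[x]]$ and $\hat y=y^0+\sum_k\hat c_ku_k+\hat d''$.

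The core step is to promote this description to an honest solution family: introduce new variables for the coefficients $\hat c_k$ and for a convergent expansion of $\hat d''$, so as to write $\hat y=Y_0(x,\hat z)$ for a polynomial (in the new variables) expression $Y_0(x,Z)$, and show that after sufficiently many such enlargements of the variable set (iterating the previous paragraph with the ambient convergent solution replaced at each stage, and using that $\hat d''$ lies in an ever higher power of $(x)$) the auxiliary system $\Phi(x,Z):=f(x,Y_0(x,Z))=0$ acquires, at its formal solution $\hat z$, a Tougeron-type Jacobian condition: some minor of $\partial\Phi/\partial Z$ divides, up to a controlled power of $(x)$, the obstruction to lifting $\hat z$ to a convergent solution. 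Granting this, Tougeron's analytic Implicit Function Theorem over ${\bf C}\{x\}$ supplies a convergent splitting $Z=(Z',Z'')$ and a convergent $Z'=\rho(x,Z'')$ with $\Phi(x,\rho(x,Z''),Z'')=0$ and $\rho(x,\hat z'')=\hat z'$; renaming $Z''$ as $Z$ and setting $Y(x,Z)=Y_0\big(x,\rho(x,Z),Z\big)$ gives $f(x,Y(x,Z))=0$ in ${\bf C}\{x,Z\}$ and $Y(x,\hat z'')=\hat y$, which is the factorization through ${\bf C}\{x,Z\}$.

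I expect the core step to be the real obstacle: arranging the enlarged system to become smooth along the formal solution, where the size of the Artin--Rees constant $e_0$ and the structure of the convergent syzygies of $J$ must be controlled through the iteration. I do not expect a shortcut via General N\'eron Desingularization (Theorem~\ref{gnd}): here $B={\bf C}\{x,Y\}/(f)$ is not of finite type over ${\bf C}\{x\}$ and the image of $v$ in ${\bf C}[[x]]$ is not finitely generated, so $v$ need not factor through any finite-type ${\bf C}\{x\}$-subalgebra of ${\bf C}[[x]]$; the statement is genuinely analytic, and its proof must stay in the convergent category with Artin's analytic approximation theorem and the analytic Implicit Function Theorem as the only inputs. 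The two outer steps are then routine.
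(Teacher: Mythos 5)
The paper does not actually prove this theorem: it quotes it from P{\l}oski's note \cite{Pl} with the one-line indication that it is obtained ``using Artin's methods from \cite{A}'', i.e.\ by rerunning Artin's analytic approximation argument while keeping track of parameters. Your reformulation of the conclusion (a convergent family $Y(x,Z)$ with $f(x,Y(x,Z))=0$ together with a formal point $\hat z$ satisfying $Y(x,\hat z)=\hat y$) is correct, and so is your remark that Theorem \ref{gnd} offers no shortcut, since $B$ is not of finite type over ${\bf C}\{x\}$ (the paper derives only the Henselian analogue from Theorem \ref{gnd}, not P{\l}oski's analytic statement). The problem is that your argument stops exactly where the theorem begins: the ``core step'' is an expectation, not a proof, and as sketched it would not succeed.

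Concretely: (i) the particular solution $\hat d''$ of $JT=J\hat d=-Q(x,\hat d)$ is a formal series whose defining equation involves the unknown $\hat d$ itself, so ``introducing new variables for a convergent expansion of $\hat d''$'' merely reproduces the original nonlinear problem one step further; nothing in the proposed enlargement forces the system $\Phi=0$ to become nondegenerate along $\hat z$, and the iteration has no visible termination. In Artin's and P{\l}oski's arguments the nondegeneracy is not created by adding variables: one replaces $(f)$ by a minimal prime $\pp$ of $(f)$ contained in the kernel of ${\bf C}\{x,Y\}\to{\bf C}[[x]]$, uses characteristic zero to find an $r\times r$ Jacobian minor $\delta\notin\pp$ with $r=\height\pp$, and performs a Noetherian induction on height to reach the case $\delta(x,\hat y)\neq 0$; the parametrized solution is then built by induction on $m$ using Weierstrass division. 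None of this reduction appears in your plan. (ii) Even granting your core step as you state it --- a Tougeron-type divisibility condition rather than genuine smoothness at $\hat z$ --- Tougeron's implicit function theorem only yields a convergent solution congruent to a given approximate one; it does not place the formal point $\hat z$ exactly on the convergent graph $Z'=\rho(x,Z'')$. The exact equality $Y(x,\hat z)=\hat y$, which is precisely what distinguishes P{\l}oski's theorem from plain Artin approximation, therefore remains unproved; for it you would need an honest unit minor of $\partial\Phi/\partial Z$ at $\hat z$, which your construction does not deliver.
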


Using Theorem \ref{gnd} one can get an extension of the above theorem.

\begin{Theorem} [\cite{P6}] Let $(A,\mm)$ be an excellent Henselian local ring, $\hat A$ its completion,  $B$ a finite type $A$-algebra and $v:B\to \hat A$ an $A$-morphism.  Then  $v$  factors through an $A$-algebra of type $ A[Z]^h$ for some variables $Z=(Z_1,\ldots,Z_s)$, where $A[Z]^h$ is the Henselization of $A[Z]_{(\mm,Z)}$.
\end{Theorem}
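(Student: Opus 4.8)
The plan is to reduce first to the case where $B$ is smooth over $A$ by invoking the General Neron Desingularization, and then to peel off the smooth algebra by combining the local structure of smooth morphisms with the universal property of the Henselization of $A[Z]_{(\mm,Z)}$.

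First I would use that $A$ is excellent Henselian, so $A\to\hat A$ is regular, and apply Theorem \ref{gnd} to $v\colon B\to\hat A$. This produces a factorization $B\to C\xrightarrow{w}\hat A$ with $C$ a smooth (in particular finite type) $A$-algebra. Hence it suffices to factor an arbitrary $A$-morphism $w\colon C\to\hat A$ with $C$ smooth over $A$ through some $A[Z]^h$. Next I would locate the relevant prime $\pp=w^{-1}(\mm\hat A)$ of $C$. Since $w$ is an $A$-morphism we have $\mm C\subseteq\pp$, so $C/\pp$ is an $(A/\mm)$-algebra and the reduction of $w$ is an injection $C/\pp\hookrightarrow\hat A/\mm\hat A=A/\mm=:k$; composing the structure map $k\to C/\pp$ with this injection gives the identity of $k$, so $C/\pp=k$. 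Thus $\pp$ is a maximal ideal of $C$ lying over $\mm$ with trivial residue field extension, and $C_\pp$ is essentially smooth over $A$ with residue field $k$.

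Now I would apply the local structure theorem for smooth morphisms: there exist $g\in C\setminus\pp$ and elements $Z_1,\dots,Z_s\in C_g$ such that $A[Z_1,\dots,Z_s]\to C_g$ is étale. Replacing each $Z_i$ by $Z_i-a_i$, where $a_i\in A$ is chosen to reduce modulo $\mm$ to the residue of $w(Z_i)$ (possible since $A\to k$ is onto), we may arrange $w(Z_i)\in\mm\hat A$, so $Z_i\in\pp C_g$; then $\pp C_g$ lies over $(\mm,Z_1,\dots,Z_s)\subset A[Z]$ with residue field $k$, i.e. $(C_g,\pp C_g)$ is an étale neighbourhood of $(A[Z]_{(\mm,Z)},(\mm,Z))$. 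This yields a canonical $A[Z]$-algebra homomorphism $\iota\colon C_g\to A[Z]^h$ carrying $\pp C_g$ into the maximal ideal. On the other hand $w$ extends to $C_g\to\hat A$ because $w(g)\notin\mm\hat A$ is a unit of the local ring $\hat A$, and the induced local map $A[Z]_{(\mm,Z)}\to\hat A$ extends uniquely to $\bar w\colon A[Z]^h\to\hat A$ since $\hat A$ is Henselian. Finally I would check $w=\bar w\circ\iota$ on $C_g$: both are $A[Z]$-algebra maps $C_g\to\hat A$ sending $\pp C_g$ into $\mm\hat A$, and since $C_g$ is unramified over $A[Z]$ such a map is unique. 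Hence $v$ factors as $B\to C\to C_g\xrightarrow{\iota}A[Z]^h\xrightarrow{\bar w}\hat A$, which is the desired conclusion.

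The only input of genuine depth here is Theorem \ref{gnd}; everything after it is bookkeeping, but the step that needs the most care is the reduction to an étale neighbourhood — verifying that, after the coordinate translation, $C_g$ is really étale over the polynomial ring $A[Z]$ with $\pp C_g$ lying over $(\mm,Z)$ and trivial residue field extension, so that both the canonical map $\iota$ into $A[Z]^h$ exists and the compatibility $w=\bar w\circ\iota$ holds by unramifiedness.
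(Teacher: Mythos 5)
Your argument is correct and follows exactly the route the paper intends (the paper gives no proof here beyond the remark that the result follows from Theorem \ref{gnd}, citing \cite{P6}): reduce to a smooth $A$-algebra $C$ via General Neron Desingularization, then use the local structure theorem for smooth morphisms to exhibit $C_g$ as an \'etale neighbourhood of $A[Z]_{(\mm,Z)}$ and factor through the Henselization. The details you supply --- the coordinate translation making $\pp C_g$ lie over $(\mm,Z)$ with trivial residue extension, the universal property of $A[Z]^h$ applied to the Henselian local ring $\hat A$, and the rigidity of unramified maps into a local ring giving $w=\bar w\circ\iota$ --- are all handled correctly.
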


   Suppose that $B=\nicefrac{A[Y]}{I}$, $Y=(Y_1,\ldots,Y_n)$. If $f=(f_1,\ldots,f_r)$, $r\leq n$ is a system of polynomials from $I$,  then denote by $\Delta_f$ the  ideal generated by  all $r\times r-$minors of the Jacobian matrix $\left(\fracs{\partial f_i}{\partial Y_j}\right)$. After Elkik \cite{El}, let $H_{B/A}$ be the radical of the ideal $\Sum{f}{} \big((f):I\big)\Delta_fB$, where the sum is taken over all systems of polynomials $f$ from $I$ with $r\leq n$.
Then $B_P$, $P\in \Spec B$ is essentially smooth over $A$ if and only if $P\not \supset H_{B/A}$ by the Jacobian criterion for smoothness.
   Thus  $H_{B/A}$ measures the non smooth locus of $B$ over $A$.

In the linear case we may  easily  get cases of Theorem \ref{gnd} when $\dim A>1$.

\begin{Lemma} [{\cite[(4.1)]{P'}}] Let $A$ be a ring and $a_1,a_2$ a weak regular sequence of $A$, that is $a_1$ is a non-zero divisor of $A$ and $a_2$ is a non-zero divisor of $A/(a_1)$. Let $A'$ be a flat $A$-algebra and set $B=A[Y_1,Y_2]/(f)$, where $f=a_1Y_1+a_2Y_2$. Then $H_{B/A}$ is the radical of $(a_1,a_2)$ and any $A$-morphism $B\to A'$ factors through a polynomial $A$-algebra in one variable.
   \end{Lemma}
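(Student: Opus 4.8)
The statement has two parts: first, compute $H_{B/A}$ for $B = A[Y_1,Y_2]/(f)$ with $f = a_1 Y_1 + a_2 Y_2$; second, show that any $A$-morphism $v : B \to A'$ factors through a polynomial $A$-algebra in one variable. For the first part, since $f$ is a single polynomial ($r = 1$), the relevant Jacobian ideal $\Delta_f$ is generated by the partial derivatives $\partial f/\partial Y_1 = a_1$ and $\partial f/\partial Y_2 = a_2$, so $\Delta_f B = (a_1, a_2)B$. I would argue that the colon ideal $\big((f) : I\big)$ contributes nothing new here: taking $f$ itself as the defining system (so $I = (f)$), we get $\big((f):(f)\big) = A[Y_1,Y_2]$, hence the summand $\big((f):I\big)\Delta_f B$ already equals $(a_1,a_2)B$, and no other choice of system inside $I = (f)$ can enlarge the radical beyond $\sqrt{(a_1,a_2)}B$. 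Thus $H_{B/A} = \sqrt{(a_1,a_2)}\,B$.

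**Factoring the morphism.** For the second part, let $v : B \to A'$ be an $A$-morphism, determined by elements $y_1 = v(Y_1)$, $y_2 = v(Y_2) \in A'$ satisfying $a_1 y_1 + a_2 y_2 = 0$ in $A'$. The key algebraic input is that $a_1, a_2$ is a weak regular sequence in $A$ and $A'$ is flat over $A$, so $a_1, a_2$ remains a weak regular sequence in $A'$; in particular the Koszul relation module of $(a_1, a_2)$ over $A'$ is generated by the trivial relation $a_1 \cdot a_2 - a_2 \cdot a_1 = 0$. Concretely: from $a_1 y_1 = -a_2 y_2$ and $a_2$ being a non-zero divisor modulo $(a_1)$ — equivalently, using that $a_1$ is a non-zero divisor and $a_2$ is regular on $A'/(a_1)A'$ — one deduces $y_1 \in a_2 A'$, say $y_1 = a_2 t$ for a unique $t \in A'$, and then $a_1 a_2 t + a_2 y_2 = 0$ forces $y_2 = -a_1 t$ (cancelling the non-zero divisor $a_2$). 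So the pair $(y_1, y_2)$ is exactly $(a_2 t, -a_1 t)$ for a single element $t \in A'$.

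**Assembling the factorization.** This computation tells me to define $C = A[Z]$, a polynomial ring in one variable $Z$, with the $B$-algebra structure $B \to C$ given by $Y_1 \mapsto a_2 Z$, $Y_2 \mapsto -a_1 Z$; this is well-defined since $f(a_2 Z, -a_1 Z) = a_1 a_2 Z - a_2 a_1 Z = 0$. Then the map $C \to A'$ sending $Z \mapsto t$ composes with $B \to C$ to recover $v$, because $Y_1 \mapsto a_2 Z \mapsto a_2 t = y_1$ and $Y_2 \mapsto -a_1 Z \mapsto -a_1 t = y_2$. This exhibits $v$ as the composite $B \to A[Z] \to A'$, as required.

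**Main obstacle.** The routine bookkeeping (the Jacobian computation, checking $f$ vanishes at $(a_2 Z, -a_1 Z)$) is immediate. The genuine content is the step showing $y_1 \in a_2 A'$ and the uniqueness of $t$: this is where the weak-regular-sequence hypothesis and flatness of $A'$ over $A$ are essential, since flatness is what propagates the weak regular sequence from $A$ to $A'$. I would make sure to state cleanly that for a flat extension, a weak regular sequence stays a weak regular sequence — which follows because $A'/(a_1)A' = (A/(a_1)) \otimes_A A'$ is flat over $A/(a_1)$, so the non-zero divisor $a_2$ on $A/(a_1)$ remains a non-zero divisor after base change. Everything else is formal.
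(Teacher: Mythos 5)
Your argument is correct in substance, but it reaches the key parametrization $(y_1,y_2)=t(a_2,-a_1)$ by a different route than the paper. The paper first observes that \emph{over $A$ itself} every solution of $a_1Y_1+a_2Y_2=0$ is a multiple of $(-a_2,a_1)$, and then invokes the equational criterion of flatness: any solution in the flat $A$-algebra $A'$ is an $A'$-linear combination of solutions in $A$, hence again a multiple of $(-a_2,a_1)$. You instead transport the weak regular sequence to $A'$ (correctly justified via $A'/a_1A'\cong A/(a_1)\otimes_AA'$) and redo the syzygy computation there. Both are one-line applications of standard flatness facts; the paper's version is the one that generalizes directly to the Proposition that follows it (arbitrary linear systems, where one quotes the equational criterion rather than a regular-sequence hypothesis), while yours is more self-contained for this specific Koszul situation. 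One small correction to your cancellation step: from $a_1y_1=-a_2y_2$ the hypothesis that $a_2$ is a non-zero divisor on $A'/a_1A'$ directly yields $y_2\in a_1A'$, not $y_1\in a_2A'$; writing $y_2=-a_1t$ and then cancelling the non-zero divisor $a_1$ gives $y_1=a_2t$. Your order of deduction is reversed (it would need $a_1$ to be regular mod $(a_2)$, which is not assumed), though the resulting parametrization and the factorization through $A[Z]$ via $Y_1\mapsto a_2Z$, $Y_2\mapsto -a_1Z$ are exactly right. Finally, note that you also address the computation of $H_{B/A}$, which the paper's proof silently omits; your identification $\Delta_fB=(a_1,a_2)B$ with $\big((f):(f)\big)$ the unit ideal is the right justification.
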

   \begin{proof} Note that all solutions of $f=0$ in $A$ are multiples of $(-a_2,a_1)$. By  flatness,  any solution of $f$ in $A'$ is a linear combinations of some solutions of $f$ in $A$ and so again a multiple of $(-a_2,a_1)$.
Let $h:B\to A'$ be a map given by $Y_i\to y_i\in A'$. Then $(y_1,y_2)=z(-a_2,a_1)$ and so $h$ factors through $A[Z]$, that is $h$ is the composite map $B\to A[Z]\to A'$, the first map being given by $(Y_1,Y_2)\to Z(-a_2,a_1)$ and the second one by $Z\to z$.
\hfill\    \end{proof}

   \begin{Proposition} [{\cite[Lemma 4.2]{P'}}] Let $f_i=\sum_{i=1}^n a_{ij}Y_j\in A[Y_1,\ldots,Y_n]$, $i\in [r]$ be a system of linear homogeneous polynomials and $y^{(k)}=(y^{(k)}_1,\ldots,y^{(k)}_n)$, $k\in [p]$ be a complete system of solutions of $f=(f_1,\ldots,f_r)=0$ in $A$. Let $b=(b_1,\ldots,b_r)\in A^r$ and $c$ a solution of $f=b$ in $A$. Let $A'$ be a flat $A$-algebra and \\
   $B= A[Y_1,\ldots,Y_n]/(f-b)$. Then  any $A$-morphism $B\to A'$ factors through a polynomial $A$-algebra in $p$ variables.
   \end{Proposition}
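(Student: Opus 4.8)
The plan is to reduce to the homogeneous case treated in the preceding Proposition by translating along the fixed solution $c$.

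First I would record the linear-algebra fact underlying everything: saying that $y^{(1)},\dots,y^{(p)}$ is a complete system of solutions of $f=0$ in $A$ means exactly that these tuples generate the kernel $K$ of the $A$-linear map $A^n\to A^r$ attached to the matrix $(a_{ij})$. Since $A'$ is flat over $A$, tensoring the exact sequence $0\to K\to A^n\to A^r$ with $A'$ keeps it exact, so $K\otimes_A A'$ is the kernel of $A'^n\to A'^r$; hence every solution of $f=0$ in $A'$ is an $A'$-linear combination $\sum_{k=1}^p z_k y^{(k)}$ with $z_k\in A'$. This is the step that actually uses the hypotheses, and it is the only point where anything beyond formal manipulation is needed — but, just as in the Lemma and Proposition above, it is immediate from flatness (a solution in a flat extension is a linear combination of solutions in the base).

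Next, given an $A$-morphism $h\colon B\to A'$, write $h(Y_j)=y_j\in A'$ and $y=(y_1,\dots,y_n)$; then $y$ solves $f=b$ over $A'$. Writing also $c$ for the image of $c$ in $A'^n$ and using that $f$ is linear with $f(c)=b$, the tuple $y-c$ solves the homogeneous system $f=0$ over $A'$, so by the previous paragraph $y-c=\sum_{k=1}^p z_k y^{(k)}$ for suitable $z_1,\dots,z_p\in A'$.

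Finally I would build the factorization explicitly. Put $Z=(Z_1,\dots,Z_p)$ and define $\alpha\colon B\to A[Z]$ by $Y_j\mapsto c_j+\sum_{k=1}^p Z_k y^{(k)}_j$. Because each $f_i$ is linear and homogeneous with $f_i(c)=b_i$ and $f_i(y^{(k)})=0$, one has $f_i\big(c+\sum_k Z_k y^{(k)}\big)=b_i$ in $A[Z]$, so the relations $f-b$ map to $0$ and $\alpha$ is a well-defined $A$-algebra homomorphism. Define $\beta\colon A[Z]\to A'$ by $Z_k\mapsto z_k$. Then $\beta\circ\alpha$ sends $Y_j$ to $c_j+\sum_k z_k y^{(k)}_j=y_j$, i.e. $\beta\circ\alpha=h$, so $h$ factors through the polynomial $A$-algebra $A[Z]$ in $p$ variables. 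The only place one must be a little careful is that ``complete system of solutions'' has to be read as ``generating set of the syzygy module''; with that reading the argument is the verbatim inhomogeneous analogue of the preceding Proposition, and the main (indeed essentially only) substantive ingredient is the base-change-of-kernels consequence of flatness.
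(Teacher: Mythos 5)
Your proposal is correct and follows the same route as the paper: flatness guarantees that every solution of the homogeneous system in $A'$ is an $A'$-linear combination of the $y^{(k)}$, so $y-c=\sum_k z_k y^{(k)}$ and $h$ factors as $Y\mapsto c+\sum_k Z_k y^{(k)}$ followed by $Z\mapsto z$. Your expanded justification of the flatness step (base change of the kernel via the exact sequence $0\to K\to A^n\to A^r$) merely makes explicit what the paper leaves implicit.
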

\begin{proof} Let $h:B\to A'$ be a map given by $Y\to y'\in A'^n$. Since $A'$ is flat over $A$ we see that  $y'-h(c)$ is a linear combinations of $y^{(k)}$, that is there exists $z=(z_1,\ldots,z_p)\in A'^p$ such that $y'-h(c)=\sum_{k=1}^pz_kh(y^{(k)})$. Therefore, $h$ factors through $A[Z_1,\ldots,Z_p]$,
that is $h$ is the composite $A$-morphism $B\to A[Z_1,\ldots,Z_p]\to A'$, where the first map is given by $Y\to c+\sum_{k=1}^pZ_ky^{(k)}$ and the second one by $Z\to z$.
\hfill\ \end{proof}

Another   form of Theorem \ref{gnd} is the following theorem which is a positive answer  to a conjecture of M. Artin \cite{A4}.

\begin{Theorem} [Cipu-Popescu \cite{CP1}]\label{cp1} Let $u:A\to A'$ be a  regular morphism of Noetherian rings, $B$ an  $A$-algebra of finite type, $v:B\to A'$  an $A$-morphism and $D\subset \Spec B$ the open smooth locus of $B$ over $A$. Then there exist    a smooth $A$-algebra $C$ and two $A$-morphisms $t:B\to C$, $w:C\to A'$ such that $v=wt$ and $C$ is smooth over $B$ at ${t^*}^{-1}(D)$, $t^*:\Spec C\to \Spec B$ being induced by $t$.
\end{Theorem}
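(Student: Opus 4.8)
The plan is to run the General Neron Desingularization of Theorem~\ref{gnd} while keeping track of where the algebra it produces fails to be smooth over $B$, and to arrange the construction so that this failure is confined away from the preimage of $D$. Write $B=A[Y]/I$, so that $D=\Spec B\setminus V(H_{B/A})$ is precisely the locus where $B$ is essentially smooth over $A$, $H_{B/A}$ being the radical of $\mathfrak a_0:=\sum_f\big((f):I\big)\Delta_f B$. I would first dispose of the \emph{easy case} $v(H_{B/A})A'=A'$: then also $v(\mathfrak a_0)A'=A'$, so there are $b_1,\dots,b_p\in\mathfrak a_0$ and $c_1,\dots,c_p\in A'$ with $\sum_j v(b_j)c_j=1$; put $C=B[T_1,\dots,T_p]/\big(1-\sum_j b_jT_j\big)$, and let $w\colon C\to A'$ be the $A$-morphism extending $v$ by $T_j\mapsto c_j$. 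Since $(b_1,\dots,b_p)C=C$, the ring $C$ is covered by the $C_{b_j}\cong B_{b_j}[T_k:k\ne j]$, polynomial algebras over the opens $\Spec B_{b_j}\subset D$; hence $\Spec C\to\Spec B$ lands in $D$, the map $B\to C$ is smooth, and each $B_{b_j}$ being smooth over $A$ (its localizations are essentially smooth over $A$), so is $C$. Thus $v=w\circ t$ with $C$ smooth over $A$ and smooth over $B$ on all of $\Spec C$, a fortiori on ${t^*}^{-1}(D)$.

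In general I would reduce to this case through a finite chain of finite-type $A$-algebras $B=B_0\to B_1\to\cdots\to B_N$ with compatible $A$-morphisms $B_i\to A'$, such that $v(H_{B_N/A})A'=A'$ and, crucially, each $B_i\to B_{i+1}$ is smooth at every prime of $B_{i+1}$ lying over a prime of $D_i:=\Spec B_i\setminus V(H_{B_i/A})$. Granting such a chain, a prime of $B_{i+1}$ lying over $D_i$ again lies in $D_{i+1}$, because a composite of morphisms that are essentially smooth at the points in question is essentially smooth there; so over ${t^*}^{-1}(D)$ the chain restricts to a chain of smooth morphisms. Applying the easy case to $B_N$ and composing then produces $t\colon B\to C$ and $w\colon C\to A'$ with $v=wt$, $C$ smooth over $A$, and $C$ smooth over $B$ at ${t^*}^{-1}(D)$, as required.

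The substance lies in the construction of the chain, and this is where the proof of Theorem~\ref{gnd} is invoked. Each desingularization step there localizes $B_i$ at an element $c$ with $v(c)$ a unit in $A'$ and then adjoins finitely many new variables subject to relations that split off, after that localization, the part of $\Omega_{B_i/A}$ (equivalently of the conormal module) not controlled by a suitable minor of a Jacobian. Over $D_i$ that minor is already a unit and $\Omega_{B_i/A}$ is already free, so the step can there be taken to be a localization followed by adjunction of polynomial variables or an \'etale relation, hence smooth over $B_i$ above ${t^*}^{-1}(D_i)$, while still enlarging $\sqrt{v(H_{B_i/A})A'}$ above the part of $\Spec B_i$ meeting $V(H_{B_i/A})$. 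Termination follows by the same device as in the proof of Theorem~\ref{gnd} — an induction on the size of the non-smooth locus, respectively on the ``standardness defect'' of $v$ — after finitely many steps reaching $v(H_{B_N/A})A'=A'$.

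The hard part is exactly this last paragraph: verifying, or arranging, that the desingularization steps of Theorem~\ref{gnd} can be performed \emph{supported on the non-smooth locus}, i.e.\ smoothly over $B_i$ above $D_i$ while still making genuine progress above $V(H_{B_i/A})$. The constructions in the proof of Theorem~\ref{gnd} (the weak-regular-sequence resolutions of the linear case recorded above, and their nonlinear generalizations) are a priori global modifications of $B_i$, so one must either re-derive each of them with this extra control, or — equivalently — run the General Neron Desingularization on an affine, or formal, neighborhood of the closed set $V(H_{B/A})$ and patch the outcome back to the already-smooth $D$ in the style of Elkik's patching lemmas. In either approach, controlling the morphism across the overlap between $D$ and the neighborhood of $V(H_{B/A})$ is the delicate point, and it is what the proof will have to settle.
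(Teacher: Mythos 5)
The paper does not actually prove Theorem \ref{cp1}; it is quoted from \cite{CP1}. Your first two steps are fine: the case $v(H_{B/A})A'=A'$ is handled completely and correctly (the Rabinowitsch-type algebra $C=B[T]/(1-\sum b_jT_j)$ is covered by the $C_{b_j}\cong B_{b_j}[T_k:k\ne j]$, which are smooth over both $A$ and $B$), and the reduction of the general statement to the existence of a chain $B=B_0\to\cdots\to B_N$ with each $B_i\to B_{i+1}$ smooth above $D_i$ and $v(H_{B_N/A})A'=A'$ is logically sound. But the existence of that chain is the entire content of the theorem beyond Theorem \ref{gnd}, and you do not establish it --- you say yourself that ``it is what the proof will have to settle.'' Worse, the plan of re-running the steps of the proof of Theorem \ref{gnd} and checking that each one is ``supported on the non-smooth locus'' would fail as stated: several of those steps are not smooth over $B_i$ anywhere above $D_i$. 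For instance, the reduction described in Section 3 of this paper replaces $B$ by $B/H_{B/A}$ when $v(H_{B/A})=0$, or by $\Im v$; these are quotients, hence not smooth over $B$ at any prime lying over $D$ unless $D$ is empty. So the steps cannot simply be ``arranged'' to have the extra property; a genuinely different device is required.

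The standard way to close the gap is not to redo the induction of Theorem \ref{gnd} with extra control, but to use Theorem \ref{gnd} as a black box to get \emph{some} factorization $B\xrightarrow{t_0}C_0\xrightarrow{w_0}A'$ with $C_0$ smooth over $A$, and then prove a separate ``upgrading'' lemma: given such a factorization, one can construct a new one $B\to C\to A'$ with $C$ smooth over $A$ and $H_{B/A}C\subseteq\sqrt{H_{C/B}}$ (which is exactly the conclusion of Theorem \ref{cp1}). This lemma is self-contained; its proof combines the smooth $A$-algebra $C_0$ with the algebra $B\otimes_AC_0$ (smooth over $B$, and smooth over $A$ above $D$) via a localization and lifting argument of the type you allude to with Elkik's patching, and it appears explicitly in the standard expositions (e.g.\ in \cite{S}). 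Until either that lemma or your chain is actually constructed, the proposal is a strategy outline rather than a proof.
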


There exists also a form of Theorem \ref{gnd} recalling us  the strong Artin approximation property.

\begin{Theorem} [\cite{P4}, \cite{P6}]\label{corm} Let $(A,\mm) $ be a Noetherian local ring with the completion map $A\to \hat A$ regular,  $B$ an $A$-algebra  of finite type and  $\nu$ the Artin function over $\hat A$ associated to the system of polynomials $f$ defining $B$.  Then there exists a function $\lambda:\bf N\to \bf N$, $\lambda\geq \nu$ such that for every positive integer $c$ and every morphism $v:B\to A/\mm^{\lambda(c)}$ there exists a smooth $A$-algebra $C$ and two $A$-algebra morphisms $t:B\to C$, $w:C\to A/\mm^c$ such that   $wt$ is the composite map $B\xrightarrow{v} A/\mm^{\lambda(c)}\to A/\mm^c$.
\end{Theorem}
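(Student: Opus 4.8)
The plan is to deduce the statement from the General Neron Desingularization (Theorem \ref{gnd}) applied to the regular completion map $A\to\hat A$, together with the fact that the Noetherian complete local ring $\hat A$ has the strong Artin approximation property (Pfister--Popescu, \cite{PP}). Write $B=A[Y]/(f)$ with $Y=(Y_1,\dots,Y_n)$ and $f=(f_1,\dots,f_q)$, and let $\nu:\mathbb N\to\mathbb N$ be the Artin function of $\hat A$ attached to $f$: for every $c$ and every $\bar z\in\hat A^n$ with $f(\bar z)\equiv 0\bmod\mm^{\nu(c)}\hat A$ there is $\hat z\in\hat A^n$ with $f(\hat z)=0$ and $\hat z\equiv\bar z\bmod\mm^c\hat A$. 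Such a $\nu$ exists precisely because $\hat A$ has strong Artin approximation, and we may take it with $\nu(c)\geq c$. I would simply set $\lambda:=\nu$ (any larger function works as well).

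First, given $c\in\mathbb N$ and an $A$-morphism $v:B\to A/\mm^{\lambda(c)}$, I would lift the images $v(Y_1),\dots,v(Y_n)$ to elements $\bar y_1,\dots,\bar y_n\in A$; since $f(v(Y))=0$ in $A/\mm^{\lambda(c)}$, this gives $f(\bar y)\in\mm^{\lambda(c)}A\subseteq\mm^{\nu(c)}\hat A$. Then, by the defining property of $\nu$, there is $\hat y\in\hat A^n$ with $f(\hat y)=0$ in $\hat A$ and $\hat y\equiv\bar y\bmod\mm^c\hat A$, so $Y\mapsto\hat y$ defines an $A$-morphism $\hat v:B\to\hat A$. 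Applying Theorem \ref{gnd} to the regular morphism $A\to\hat A$, the finite type $A$-algebra $B$, and $\hat v$, one obtains a smooth $A$-algebra $C$ and $A$-morphisms $t:B\to C$ and $\hat w:C\to\hat A$ with $\hat v=\hat w t$. Finally, let $w:C\to A/\mm^c$ be the composite of $\hat w$ with the projection $\hat A\to\hat A/\mm^c\hat A$, using the canonical identification $\hat A/\mm^c\hat A=A/\mm^c$. Then $wt$ sends the class of each $Y_i$ to $\hat y_i\bmod\mm^c$, which equals $\bar y_i\bmod\mm^c$, hence the image of $v(Y_i)$ under $A/\mm^{\lambda(c)}\to A/\mm^c$; since $B$ is generated over $A$ by $Y_1,\dots,Y_n$, the two $A$-algebra maps $wt$ and $B\xrightarrow{v}A/\mm^{\lambda(c)}\to A/\mm^c$ coincide, as required.

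Once Theorem \ref{gnd} is granted, the argument is essentially forced, so the ``hard part'' is not in the above manipulations but in the two theorems they rest on: the existence of the Artin function $\nu$ of $\hat A$ (strong Artin approximation for complete local rings) and General Neron Desingularization itself. Within the proof the only points that need care are formal: that one fixed $\lambda$ works uniformly over all morphisms $v$ --- which holds because $\nu$ depends only on $f$ and not on $v$ --- and the identification $\hat A/\mm^c\hat A\cong A/\mm^c$ that turns the $\hat A$-valued factorization coming out of Theorem \ref{gnd} into the prescribed $A/\mm^c$-valued one. If one prefers not to invoke strong approximation of $\hat A$ directly, the same conclusion can instead be reached by the ultrapower method that extracts strong-approximation statements from Theorem \ref{gnd} (in the spirit of \cite[4.5]{P}): negating the conclusion produces, for some fixed $c$, morphisms $v_k:B\to A/\mm^k$ none of which admits a factorization of the required type, an ultraproduct of the associated data furnishes a single $A$-morphism $B\to\hat A$ to which Theorem \ref{gnd} applies, and transferring the resulting factorization back to a set of indices $k$ in the ultrafilter contradicts the choice of the $v_k$.
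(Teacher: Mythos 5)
Your argument is correct, and the paper itself gives no proof of Theorem \ref{corm} (it is stated as a survey item with references to \cite{P4}, \cite{P6}), so there is nothing to diverge from: your deduction --- lift $v$ to an approximate solution in $A$, use the Artin function of $\hat A$ (strong approximation for complete local rings) to replace it by an exact solution $\hat y\in\hat A^n$ congruent modulo $\mm^c\hat A$, apply Theorem \ref{gnd} to the resulting $\hat v:B\to\hat A$, and project along $\hat A\to\hat A/\mm^c\hat A\cong A/\mm^c$ --- is exactly the natural route and matches how the paper combines these two ingredients elsewhere. The only points worth flagging are minor and you already handle them: one should take $\lambda(c)\geq c$ so that the projection $A/\mm^{\lambda(c)}\to A/\mm^c$ exists (e.g.\ $\lambda(c)=\max(\nu(c),c)$), and the equality $wt=\pi\circ v$ is checked on the generators $Y_i$, which suffices since they generate $B$ over $A$.
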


Sometimes, we may find some information about $\lambda$ (and so about $\nu$).
Let  $A$ be a discrete valuation ring, $x$ a local parameter of $A$,  $A'=\hat A$ its completion and  $B=A[Y]/I$, $Y=(Y_1,\ldots,Y_n)$ an  $A$-algebra of finite type.
  If $f=(f_1,\ldots,f_r)$, $r\leq n$ is a system of polynomials from $I$ then we consider a $r\times r$-minor $M$ of the Jacobian matrix $(\partial f_i/\partial Y_j)$.  Let  $c\in \bf N$. Suppose that there exists  an $A$-morphism $v:B\to A'/(x^{2c+1})$   and  $N\in ((f):I)$ such that
 $v(NM)\not\in (x)^c/(x^{2c+1})$, where for simplicity we write $v(NM)$ instead of $v(NM+I)$.

\begin{Theorem}[{\cite[Theorem 10]{P6}}]\label{lambda} There exists a $B$-algebra $C$ which is smooth over $A$ such that every $A$-morphism $v':B\to A'$ with $v'\equiv v \ \mbox{modulo}\ x^{2c+1}$ (that is $v'(Y)\equiv v(Y) \ \mbox{modulo}\
 x^{2c+1}$) factors through $C$.
\end{Theorem}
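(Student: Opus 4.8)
Here is a plan for proving Theorem \ref{lambda}; it amounts to carrying out the one--dimensional case of the General N\'eron Desingularization (Theorem \ref{gnd}) by hand, while bookkeeping powers of $x$.

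\medskip\noindent\emph{Step 1: reduction to data in $A$.} Write $\bar y=v(Y)\in (A'/x^{2c+1})^n$ and lift it to $\tilde y\in A^n$, which is possible since $A/x^{2c+1}A\cong A'/x^{2c+1}A'$. Because each $f_i\in I$ and $v$ is an $A$--morphism, $f_i(\tilde y)\equiv 0$ mod $x^{2c+1}$, so $f_i(\tilde y)=x^{2c+1}g_i$ with $g_i\in A$ (faithful flatness of $A\to A'$). Because $v(NM)\notin(x)^c/(x^{2c+1})$, the element $(NM)(\tilde y)\in A$ has $x$--adic valuation $d\le c-1$, hence $M(\tilde y)=x^{d_M}u_M$, $N(\tilde y)=x^{d_N}u_N$ with $u_M,u_N$ units of $A$ and $d_M+d_N=d\le c-1$. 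The same numbers and $g_i$ are attached to every $A$--morphism $v'\colon B\to A'$ with $v'\equiv v$ mod $x^{2c+1}$, and such a $v'$ yields $y'=v'(Y)\in A'^n$ with $f(y')=0$, $h(y')=0$ for all $h\in I$, and $y'\equiv\tilde y$ mod $x^{2c+1}$. The aim is to produce one $B$--algebra $C$, smooth of finite type over $A$, built only from $\tilde y$ (and the fixed data $f,M,N$), through which all these $v'$ factor.

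\medskip\noindent\emph{Step 2: the scaled N\'eron modification.} After renumbering, $M=\det H$ with $H=(\partial f_i/\partial Y_j)_{1\le i,j\le r}$; put $H_0=H(\tilde y)$, let $G_0\in A^{r\times r}$ be the adjugate of $H_0$, and set $G_0''=u_M^{-1}G_0$, so $H_0G_0''=x^{d_M}\,\mathrm{Id}_r$. With new variables $T=(T_1,\dots,T_r)$, $U=(U_{r+1},\dots,U_n)$, replace $Y_j$ by $\tilde y_j+x^c(G_0''T)_j$ for $j\le r$ and by $\tilde y_j+x^{c+d_M}U_j$ for $j>r$. Taylor expansion around $\tilde y$, together with $H_0G_0''=x^{d_M}\mathrm{Id}_r$ and the divisibilities of Step 1, gives $f_i(\text{subst})=x^{c+d_M}\hat f_i$ with
\[
\hat f_i=T_i+\sum_{j>r}\tfrac{\partial f_i}{\partial Y_j}(\tilde y)\,U_j+x^{\,c+1-d_M}g_i+x^{\,c-d_M}\rho_i(T,U),\qquad \rho_i\in(T,U)^2 .
\]
Since $c-d_M\ge 1$, the $r\times r$ Jacobian minor $\Delta=\det(\partial\hat f_i/\partial T_k)_{i,k\le r}=1+(\text{element of }(x,T,U))$ is a unit in the relevant localization; similarly $N(\text{subst})=x^{d_N}\mu$ with $\mu=u_N+(\text{element of }(x,T,U))$ a unit. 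Set $C=\bigl(A[T,U]/(\hat f_1,\dots,\hat f_r)\bigr)[1/(\Delta\mu)]$. By the Jacobian criterion for smoothness (as recalled above via $H_{B/A}$), $A[T,U]/(\hat f)$ is smooth over $A$ of relative dimension $n-r$ wherever $\Delta$ is invertible, so $C$ is smooth of finite type over $A$. The substitution is an $A$--morphism $A[Y]\to C$ with $f_i\mapsto x^{c+d_M}\hat f_i=0$; writing $Nh_l=\sum_j a_{jl}f_j$ (a witness of $N\in((f):I)$) one gets $N(\text{subst})\,h_l(\text{subst})\in(\hat f)$, hence $h_l(\text{subst})=0$ in $C$ since $\mu$ is invertible there; thus the morphism descends to $t\colon B\to C$, making $C$ a $B$--algebra.

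\medskip\noindent\emph{Step 3: the factorization.} Given $v'$ with $y'=v'(Y)$, define $w(U_j)=x^{-(c+d_M)}(y'_j-\tilde y_j)$ for $j>r$ and $w(T)=x^{-(c+d_M)}H_0(y'_{\le r}-\tilde y_{\le r})$. Since $y'-\tilde y\in x^{2c+1}A'^n$ and $c+d_M\le 2c-1$, these lie in $x^{\,c+1-d_M}A'^{\bullet}\subseteq\mm A'^{\bullet}$. One checks directly that under $w$ the substituted $Y_j$ become exactly $y'_j$ (using $G_0''H_0=x^{d_M}\mathrm{Id}_r$), hence $\hat f_i(w(T),w(U))=x^{-(c+d_M)}f_i(y')=0$, while $\Delta$ and $\mu$ evaluate at $w$ to elements of $1+\mm A'$ and $u_N+\mm A'$, i.e.\ to units of $A'$. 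Therefore $w$ defines an $A$--morphism $C\to A'$, and $B\xrightarrow{t}C\xrightarrow{w}A'$ sends $Y$ to $y'$, i.e.\ equals $v'$. As $C$ was constructed from $\tilde y$ alone, it is the required algebra.

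\medskip\noindent\emph{Main obstacle.} The only delicate point is the $x$--adic bookkeeping: one must check that the single exponent $2c+1$ simultaneously suffices (i) to divide $f_i(\text{subst})$ by exactly $x^{c+d_M}$ so that $T_i$ survives with a unit coefficient; (ii) to place $w(T),w(U)$ in $\mm A'$ so that $\Delta$ and $\mu$ become units of $A'$ at $w$; and (iii) to push the correction terms $x^{\,c+1-d_M}g_i$ and $x^{\,c-d_M}\rho_i$ into $(x)$ so that $\Delta\equiv 1$ and $C$ is genuinely smooth (not merely essentially smooth). The inequality $d=d_M+d_N\le c-1$ is exactly what makes all of $c-d_M$, $c-d_N$, $c+1-d_M$ and $2c+1-(c+d_M)$ positive. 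A secondary care is the discrepancy between $I$ and $(f)$: this is why the conductor element $N$, rather than only the Jacobian minor $M$, enters the hypothesis and reappears in $\mu$, ensuring $\Spec C$ maps into $\Spec B$ and not merely into $\Spec A[Y]/(f)$.
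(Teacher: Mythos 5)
Your construction is correct and is essentially the paper's own one‑dimensional N\'eron modification (the step $Y=y'+(d/s)G(y')T$ with $G=N\cdot\mathrm{adj}(H)$ and the divided equations $g$, sketched at the end of Section~3), specialized to the DVR situation where the approximate solution lifts into $A$ itself, with your split into the blocks $T$ and $U$ playing the role of the bordered matrix $H$. The one point to complete is in Step~2: from $N(\mathrm{subst})\,h_l(\mathrm{subst})=x^{d_N}\mu\,h_l(\mathrm{subst})\in(\hat f)$ you must cancel not only the unit $\mu$ but also $x^{d_N}$, which is legitimate because the standard smooth algebra $C$ is $A$-flat, so $x$ is a non‑zerodivisor on it.
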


\begin{Corollary} [{\cite[Theorem 15]{P6}}]\label{lambda1} In the assumptions and notation of Corollary \ref{lambda} there exists a canonical bijection
$$A'^s\to \{v'\in \Hom_A(B,A'):v'\equiv v \ \mbox{modulo}\ x^{2c+1}\}$$
 for some $s\in \bf N$.
 \end{Corollary}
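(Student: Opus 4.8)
The plan is to use Theorem~\ref{lambda} to replace the set $W:=\{v'\in\Hom_A(B,A'):v'\equiv v\bmod x^{2c+1}\}$ by a set of morphisms out of the smooth $A$-algebra $C$, and then to extract the affine structure from smoothness of $C$ together with the $x$-adic completeness of $A'=\hat A$. So first I would fix, as in the proof of Theorem~\ref{lambda}, a $B$-algebra $C$ which is smooth over $A$, through which every $v'\in W$ factors via the structure map $p\colon B\to C$, and which comes equipped with an $A$-morphism $\bar w\colon C\to A'/(x^{2c+1})$ satisfying $\bar w\circ p=v$. Since $C$ is smooth, hence formally smooth, over $A$, and each surjection $A'/(x^{m+1})\to A'/(x^m)$ with $m\ge 1$ has square-zero kernel, the map $\bar w$ can be lifted successively, and by completeness of $A'$ it lifts to an $A$-morphism $C\to A'$; in particular the set $\mathcal W:=\{w\in\Hom_A(C,A'):w\equiv\bar w\bmod x^{2c+1}\}$ is non-empty, and so $W\neq\emptyset$ by composing with $p$.

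Next I would analyse $\mathcal W$. Put $C':=C\otimes_A A'$, so that $\Hom_A(C,A')=\Hom_{A'}(C',A')$ and $C'$ is smooth over $A'$. The reduction of $\bar w$ modulo $x$ defines a $k$-rational point $\qq$ of the closed fibre $\Spec(C'/xC')$, which is smooth over the residue field $k=A'/xA'$; write $s$ for the relative dimension of $C'$ over $A'$ at $\qq$. By the local structure of smooth morphisms there are $g\in C'\setminus\qq$ and an \'etale $A'$-morphism $\psi\colon A'[T_1,\dots,T_s]\to C'_g$ for which, after a translation of the $T_i$, $\psi^{-1}(\qq C'_g)=(x,T_1,\dots,T_s)$. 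Every $w\in\mathcal W$ sends $g$ to a unit of $A'$ (its reduction modulo $x$ is the point $\qq$ and $g\notin\qq$), hence factors through $C'_g$; and since $\psi$ is \'etale and $A'$ is Henselian, an $A'$-morphism $C'_g\to A'$ whose reduction modulo $x$ equals $\qq$ is determined uniquely by the image $t=w(\psi(T))\in A'^s$ of $T$, while conversely every $t\in A'^s$ congruent to $t_0:=\bar w(\psi(T))$ modulo $x^{2c+1}$ comes from exactly one such morphism, which then lies in $\mathcal W$. Thus $w\mapsto x^{-(2c+1)}\bigl(w(\psi(T))-t_0\bigr)$ is a bijection $\mathcal W\to A'^s$, canonical once $g$ and the coordinates $T$ are fixed.

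It remains to see that composition with $p$ restricts to a bijection $\mathcal W\to W$; combining this with the previous bijection yields the asserted $A'^s\to W$. Surjectivity is a consequence of Theorem~\ref{lambda}: a given $v'\in W$ factors as $v'=w_1\circ p$, and one checks that then $w_1\equiv\bar w\bmod x^{2c+1}$, so $w_1\in\mathcal W$. Injectivity is the statement that $w,w'\in\mathcal W$ with $w\circ p=w'\circ p$ coincide. Both assertions reduce to a uniqueness property of factorisations through $C$, valid for any $v'$ with $v'(NM)\notin(x)^c$; and this applies to every $v'\in W$, since $v'(NM)\equiv v(NM)\bmod x^{2c+1}$ and $\ord_x\bigl(v(NM)\bigr)<c$.

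The uniqueness of factorisations through $C$ is the step I expect to be the main obstacle, and it is precisely where the numerology $2c+1>2\,\ord_x\!\bigl(v(NM)\bigr)$ is consumed: the minor $M$ controls, along $v'$, the smooth directions that $C$ adds to $B$, so a Newton / Artin--Tougeron contraction argument forces two factorisations of $v'$ which agree to order $x^{2c+1}$ to agree identically; alternatively this can be read directly off the explicit shape of $C$ built in the proof of Theorem~\ref{lambda}. Granting it, the chain of bijections above establishes the corollary.
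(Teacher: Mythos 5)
The paper offers no proof of this corollary (it is quoted from \cite[Theorem 15]{P6}), so there is nothing internal to compare with; judged on its own, your argument has the right architecture but leaves the decisive step unproved. The identification $\mathcal W\cong A'^s$ via the local structure of the smooth $A'$-algebra $C\otimes_AA'$ and Hensel's lemma over the Henselian local rings $A'$ and $A'/(x^{2c+1})$ is correct. The problem is the bridge $p^*\colon\mathcal W\to W$, $w\mapsto w\circ p$. For a general smooth $B$-algebra $C$ through which every $v'\in W$ factors, restriction to $B$ is neither injective nor surjective onto $W$ from the congruence class you chose (take $C=B[T]$ when $B$ is already smooth): both properties are facts about the \emph{particular} $C$ built in the proof of Theorem \ref{lambda}, and you explicitly ``grant'' them. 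Concretely, that $C$ is a localization of $A[Y,T]/(I,h,g)$ with $h=s(Y-y')-dG(y')T$ and $G(y')H(y')=H(y')G(y')=ds\,\mathrm{Id}_n$, where $d\neq 0$ and $s$ is a unit; injectivity of $p^*$ holds because $d^2s\,w(T)=sH(y')\big(w(Y)-y'\big)$ recovers $w(T)$ from $w\circ p$, $A'$ being a domain. This computation is exactly the content you postponed, and without it the chain of bijections is not established.

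There is also a quantitative flaw in the surjectivity step as you set it up. If $v'\in W$ factors as $v'=w_1\circ p$, the extra coordinates of $w_1$ are recovered from $v'(Y)-y'$ only after dividing by $d^2$, so from $v'(Y)\equiv y'$ modulo $x^{2c+1}$ one gets $w_1\equiv\bar w$ only modulo $x^{2c+1-2\,\mathrm{ord}_x(d)}$, not modulo $x^{2c+1}$. Hence $w_1$ need not lie in your $\mathcal W$, and $p^*(\mathcal W)$ may be a proper subset of $W$; conversely, enlarging the congruence class defining $\mathcal W$ destroys the claim that $p^*$ lands in $W$. The clean route (and the one used for \cite[Theorem 15]{P6}) bypasses $\mathcal W$ entirely and parametrizes $W$ directly: $v'\mapsto t:=d^{-2}H(y')\big(v'(Y)-y'\big)$ identifies $W$ with the set of solutions $t$ of the auxiliary smooth system $g$ in a suitable congruence class, and the implicit function theorem makes that set an affine space $A'^{\,n-r}$, the first $r$ coordinates of $t$ being Hensel-determined by the last $n-r$. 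You should either carry out this explicit parametrization or prove the two properties of $p^*$ from the explicit shape of $C$, with the corrected modulus.
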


 Let $k$ be a field and $F$  a  $k$-algebra of finite type, let us say $F=k[U]/J$, $U=(U_1,\ldots,U_n)$. An arc $\Spec k[[x]]\to \Spec F$  is given by a $k$-morphism $F\to A'=k[[x]]$.
Assume that $H_{F/k}\not =0$ (this happens for example when $F$ is reduced and $k$ is perfect). Set $A=k[x]_{(x)}$, $B=A\otimes_kF$. Let
   $f=(f_1,\ldots,f_r)$, $r\leq n$ be a system of polynomials from $J$ and $M$ a $r\times r$-minor  of the Jacobian matrix $(\partial f_i/\partial U_j)$.  Let  $c\in \bf N$. Assume that there exists  an $A$-morphism $g:F\to A'/(x^{2c+1})$   and  $N\in ((f):J)$ such that
 $g(NM)\not\in (x)^c/(x^{2c+1})$.
 Note that $A\otimes_k-$ induces a bijection $\Hom_k(F,A')\to \Hom_A(B,A')$ by adjunction.

 \begin{Corollary} [{\cite[Corollary 16]{P6}}]The set $\{g'\in \Hom_k(F,A'):g'\equiv g \ \mbox{modulo}\ x^{2c+1}\}$ is in bijection with an affine space $A'^s$ over $A'$ for some $s\in \bf N$.
 \end{Corollary}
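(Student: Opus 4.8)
The plan is to deduce this from Corollary~\ref{lambda1} by transporting all the data across the adjunction isomorphism $\Hom_k(F,-)\cong\Hom_A(B,-)$ on $A$-algebras noted just above, which is available precisely because $B=A\otimes_kF$.

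First I would record the translation of data. Writing $F=k[U]/J$ with $J$ finitely generated, we get $B=A[U]/I$ with $I=JA[U]$, so $B$ is of finite type over the discrete valuation ring $A=k[x]_{(x)}$, whose local parameter is $x$ and whose completion is $A'=k[[x]]$; this is exactly the situation of Theorem~\ref{lambda}. The chosen $f\subset J$ and the $r\times r$-minor $M$ of its Jacobian matrix $(\partial f_i/\partial U_j)$ make sense unchanged in $A[U]$, since the relevant derivatives involve only the $U_j$. Because $A$ is free, in particular flat, over $k$, flat base change for colon ideals gives $\big((f):_F J\big)\otimes_k A=\big((f):_B I\big)$, so the element $N\in\big((f):_F J\big)$ may be regarded as an element of $\big((f):_B I\big)$. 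Finally, the adjunction sends the $k$-morphism $g:F\to A'/(x^{2c+1})$ to an $A$-morphism $v:B\to A'/(x^{2c+1})$ with $v(NM)=g(NM)\notin (x)^c/(x^{2c+1})$, so all hypotheses of Theorem~\ref{lambda} and Corollary~\ref{lambda1} hold for $B$, $f$, $M$, $N$, $c$ and this $v$.

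Next I would check that the adjunction is compatible with truncation modulo $x^{2c+1}$. The quotient $A'\to A'/(x^{2c+1})$ is a morphism of $A$-algebras, and by naturality of the adjunction in the target, post-composition with it corresponds on the two sides; hence for $g'\in\Hom_k(F,A')$ with associated $v'\in\Hom_A(B,A')$ one has $g'\equiv g\ \mbox{modulo}\ x^{2c+1}$ if and only if $v'\equiv v\ \mbox{modulo}\ x^{2c+1}$. Thus $g'\mapsto v'$ restricts to a bijection between $\{g'\in\Hom_k(F,A'):g'\equiv g\ \mbox{modulo}\ x^{2c+1}\}$ and $\{v'\in\Hom_A(B,A'):v'\equiv v\ \mbox{modulo}\ x^{2c+1}\}$. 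Corollary~\ref{lambda1} identifies the latter set, canonically, with an affine space $A'^s$ over $A'$ for some $s\in{\bf N}$, and composing the two bijections yields the assertion. The substantive content is entirely in Theorem~\ref{lambda}; here the only points to verify are the compatibility of the adjunction with the truncation and the behaviour of the colon ideal under the flat base change $k\to A$, both of which are routine bookkeeping.
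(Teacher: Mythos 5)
Your proposal is correct and follows exactly the route the paper intends: the remark immediately preceding the corollary about the adjunction bijection $\Hom_k(F,A')\to\Hom_A(B,A')$ is there precisely so that the statement reduces to Corollary~\ref{lambda1} after transporting $g$, $f$, $M$, $N$ to the $A$-algebra $B=A\otimes_kF$, which is what you do. Your verifications (that $N$ lies in the transported colon ideal, and that the adjunction commutes with truncation modulo $x^{2c+1}$) are the right routine checks, so there is nothing to add.
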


Next we give a possible extension of Greenberg's result on the strong Artin approximation property \cite{Gr}.
Let $(A,\mm)$ be a Cohen-Macaulay local ring (for example a reduced ring) of dimension one, $A'={\hat A}$ the completion of $A$, $B=A[Y]/I$, $Y=(Y_1,\ldots,Y_n)$ an  $A$-algebra of finite type and $c,e\in \mathbb N$.
Suppose that there exists  $f=(f_1,\ldots,f_r)$ in $I$, a $r\times r$-minor $M$ of the Jacobian matrix $(\partial f/\partial Y)$, $N\in ((f):I) $ and an $A$-morphism $v:B\to A/\mm^{2e+c}$
such that $(v(MN))\supset \mm^e/\mm^{2e+c}$. Then we may construct a General Neron Desingularization in the idea of Theorem \ref{lambda}, which could be used to get the following theorem.

\begin{Theorem}[A. Popescu-D. Popescu, \cite{AP}] \label{gr} There exists an $A$-morphism $v':B\to {\hat A}$ such that $v'\equiv v\ \mbox{modulo}\ \mm^c\hat A$, that is $v'(Y+I)\equiv v(Y+I)\ \mbox{modulo}\ \mm^c\hat A$. Moreover, if $A$ is also excellent Henselian  there exists an $A$-morphism $v':B\to  A$ such that $v'\equiv v\ \mbox{modulo}\ \mm^c$.
\end{Theorem}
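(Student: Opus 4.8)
The plan is to imitate the strategy used elsewhere in the paper for constructing explicit General Neron Desingularizations in dimension one, that is to construct from the given data $f$, $M$, $N$ and $v$ a $B$-algebra $C$ which is smooth over $A$ such that every approximate solution close enough to $v$ lifts along $C\to A'={\hat A}$. First I would reduce to the situation of Theorem \ref{lambda}: the hypothesis $(v(MN))\supset \mm^e/\mm^{2e+c}$ plays the role of the condition $v(NM)\notin (x)^c/(x^{2c+1})$ there, using that $\dim A=1$ and $A$ is Cohen--Macaulay so that a system of parameters is a regular sequence and $\mm^e$ behaves like a principal power up to bounded fudge factors. The point is that $MN$ is, up to units, of controlled order, and Cohen--Macaulayness lets one invert it after passing to a suitable localization/blow-up algebra. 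So the core step is to mimic the proof of Theorem \ref{lambda}: set $d=v(MN)$, adjoin variables $T=(T_1,\ldots,T_n)$ and consider the $A$-algebra $C$ obtained from $B$ by writing $Y=y+dT$ (where $y$ is a lift of $v(Y)$ to $A$), expanding $f(y+dT)$, and using $f(y)\in \mm^{2e+c}$ together with the standard identity $f(y+dT)\equiv f(y)+d\,(\partial f/\partial Y)(y)\cdot T \bmod d^2$ to factor out a high power of $d$; the resulting equations, after dividing, define a $C$ with a distinguished $r\times r$ Jacobian minor congruent to a unit, hence $C$ is smooth over $A$. This is exactly the computation carried out in \cite{P6} and \cite{AP}, so I would cite it rather than redo it.

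Granting such a $C$, the proof of the theorem itself is then short. Since $C$ is smooth over $A$ and $v$ induces a morphism $B\to A/\mm^{2e+c}$ which, by the construction of $C$ and the hypothesis on $v(MN)$, lifts to an $A$-morphism $C\to A/\mm^{2e+c}$, I want to lift this further to an $A$-morphism $C\to {\hat A}$ agreeing with it modulo $\mm^c$. For a smooth $A$-algebra this is the usual smoothness lifting property (the Implicit Function Theorem / Hensel-type argument over the complete ring ${\hat A}$): given a solution of the smooth equations modulo $\mm^N$ for $N$ large compared to the number of ``denominators'' introduced, one lifts it to an honest solution over ${\hat A}$, and by choosing $2e+c$ large enough (which is automatic from the shape of the hypothesis) the lift is congruent to the original modulo $\mm^c$. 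Composing $B\to C\to {\hat A}$ gives the desired $v':B\to {\hat A}$ with $v'\equiv v \bmod \mm^c{\hat A}$.

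For the last sentence, suppose in addition that $A$ is excellent Henselian. Then the completion map $A\to {\hat A}$ is regular, so $A$ has the Artin approximation property by Theorem \ref{main}. Applying it to the finite system of polynomial equations defining the smooth $A$-algebra $C$ together with the equations $Y=y+dT$ that express the factorization $B\to C$: the morphism $C\to{\hat A}$ just constructed gives a solution in ${\hat A}$, hence there is a solution in $A$ congruent to it modulo $\mm^c{\hat A}$, i.e.\ an $A$-morphism $C\to A$, whence $v'\colon B\to C\to A$ with $v'\equiv v \bmod \mm^c$. Alternatively, and more in the spirit of this section, one simply notes that a smooth $A$-algebra with an ${\hat A}$-point reducing to an $A/\mm^c$-point has an $A$-point reducing to it, because $A$ Henselian excellent means $C\otimes_A{\hat A}$-points descend.

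The main obstacle, and the only place real work is needed, is the first step: producing the smooth $B$-algebra $C$ from a solution that is only known modulo a power of $\mm$ rather than exactly, in the one-dimensional Cohen--Macaulay (not necessarily regular, not necessarily a DVR) setting. One must check that the ``division by $d=v(MN)$'' is legitimate — that the relevant coefficients of the expansion $f(y+dT)$ really are divisible by the powers of $d$ one divides out — and this is where the precise choice of the exponent $2e+c$ (with the factor $2e$ absorbing the square of the denominator and the $+c$ guaranteeing the final congruence) matters; the Cohen--Macaulay hypothesis enters to guarantee that $\mm^e$ is generated by a regular sequence up to bounded distortion, so that $MN$ having order controlled by $\mm^e$ is enough to run the argument. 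Once this is set up correctly — and it is done in \cite{P6} and \cite{AP} — everything else is a formal application of smoothness and, in the Henselian excellent case, of Theorem \ref{main}.
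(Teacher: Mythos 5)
Your overall architecture is the one the paper intends: build from the approximate solution a $B$-algebra $C$ smooth over $A$, lift the induced $A/\mm^{2e+c}$-point of $C$ to an $\hat A$-point by formal smoothness, and compose with $B\to C$; for the last sentence, either apply Theorem \ref{main} to $I$ and the $\hat A$-solution just produced, or invoke the Implicit Function Theorem for the smooth algebra $C$ over the Henselian ring $A$ --- both work, and your treatment of these lifting steps is fine.

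The genuine gap is in the one place you yourself identify as ``where real work is needed'': the construction of $C$. The substitution $Y=y+dT$ with $d$ (a lift of) $v(MN)$ does not produce a unit Jacobian minor. Writing $f(y+dT)=f(y)+d\,(\partial f/\partial Y)(y)T+d^2(\cdots)$, the linear term is divisible only by $d$, so you can divide the system by $d$ but not by $d^2$; after dividing by $d$ the Jacobian in $T$ is $(\partial f/\partial Y)(y)$ modulo $d$, whose distinguished $r\times r$ minor is $M(y)$ --- an element that merely satisfies $(v(MN))\supset \mm^e/\mm^{2e+c}$, not a unit. The missing idea is the bordered-Jacobian/adjoint trick spelled out at the end of Section 3: border $(\partial f/\partial Y)$ by $(0\,|\,\mbox{Id}_{n-r})$ to an $n\times n$ matrix $H$, put $G=N\cdot\mathrm{adj}(H)$, so that $GH=HG=NM\,\mbox{Id}_n=P\,\mbox{Id}_n$ and $(\partial f/\partial Y)G=(P\,\mbox{Id}_r\,|\,0)$, and substitute via $h=s(Y-y)-dG(y)T$ where $P(y)=ds$ with $s\equiv 1$ modulo $d$. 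Only then does the linear term become $s^{p-1}dP(y)T\approx d^2T$, so that after dividing by $d^2$ one obtains $g_i=s^pb_i+s^pT_i+Q_i$ with $Q\in T^2D[T]^r$, whose Jacobian in $T$ is $s^p\,\mbox{Id}_r$ modulo $(T)$ --- this is where smoothness actually comes from. You also need $N\in\big((f):I\big)$ and a localization at $s$ (equivalently at $P$) to force the remaining generators of $I$, not just $f$, to vanish at the new point; your sketch treats only $f$. Citing \cite{AP} for the bookkeeping of the exponents is legitimate, but the computation you describe in its place is not the one that works.
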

\begin{Remark} The above theorem could be extended for Noetherian local rings of dimension one (see \cite{PP1}). In this case the statement depends also on a reduced primary decomposition of $(0)$ in $A$.
\end{Remark}

Using \cite{AP} we end this section with an algorithmic attempt  to explain the proof of Theorem \ref{gnd} in the frame of Noetherian local domains of dimension one. Let $u:A\to A'$ be a flat morphism of Noetherian local domains of dimension $1$. Suppose that $A\supset \mathbb Q$  and  the maximal ideal $\mm$ of $A$ generates the maximal ideal of $A'$. Then $u$ is a regular morphism. Moreover, we suppose that there exist canonical inclusions $k=A/\mm\to A$, $k'=A'/\mm A'\to A'$ such that $u(k)\subset k'$.

 If $A$ is  essentially of finite type over $\mathbb Q$, then the ideal $H_{B/A}$ can be  computed in \textsc{Singular} by following its definition but it is easier to describe only the ideal $\sum_f \big((f):I\big)\Delta_fB$ defined above.  This is the case  considered in our algorithmic part, let us say $A\cong k[x]/F$ for some variables $x=(x_1,\ldots x_m)$, and the completion of $A'$ is $k'\llbracket x\rrbracket/(F)$. When $v$ is defined by polynomials $y$ from $k'[x]$ then our problem is easy. Let $L$ be the field obtained by adjoining to $k$ all coefficients of $y$. Then $R=L[x]/(F)$ is a subring of $A'$ containing $\Im v$ which is essentially smooth over $A$. Then we may take $B' $ as a standard smooth $A$-algebra such that $R$ is a localization of $B'$.
Consequently we  suppose usually  that $y$ is not in $k'[x]$.

We may suppose that $v(H_{B/A})\not =0$. Indeed, if $v(H_{B/A}) =0$ then $v$ induces an $A$-morphism $v':B'=B/H_{B/A}\to A'$ and we may replace $(B,v)$ by $(B',v')$. Applying this trick several times we reduce to the case  $v(H_{B/A})\not =0$. However, the fraction field of $\Im v$ is essentially smooth over $A$ by separability, that is $H_{\Im v/A}A'\not =0$ and in the worst case our trick will change $B$ by $\Im v$ after several steps.

  Choose $P'\in \big(\Delta_f((f):I)\big)\setminus I$ for some system of polynomials $f=(f_1,\ldots,f_r)$ from $I$ and  $d'\in \big(v(P')A'\big )\cap A$, $d'\not = 0$. Moreover, we may choose $P'$ to be from $M\big((f):I\big)$ where $M$ is a $r\times r$-minor of $\left(\fracs{\partial f}{\partial Y}\right)$. Then $d'=v(P')z\in \big(v(H_{B/A})\big)\cap A$ for some $z\in A'$. Set $B_1=B[Z]/(f_{r+1}) $, where $f_{r+1}=-d'+P'Z$
and let $v_1:B_1\to A'$ be the map of $B$-algebras given by $Z\to z$. It follows that $d'\in \big(( f,f_{r+1}):(I,f_{r+1})\big)$ and  $d'\in \Delta_f$, $d'\in \Delta_{f_{r+1}}$. Then $d=d'^2\equiv P\ \mbox{modulo}\ (I,f_{r+1})$ for $P=P'^2Z^2\in H_{B_1/A}$. Replace $B$ by $B_1$ and the Jacobian matrix $J=(\partial f/\partial Y)$ will be  now the new $J$ given by $
\left(\begin{array}{cc}
J & 0 \\
* & P'
\end{array}\right).$
Thus we reduce to the case when $d\in H_{B/A}\cap A$.

But how to get $d$ with a computer if $y$ is not polynomial defined over $k'$? Then the algorithm is complicated because we are not able to tell   the computer who is $y$ and so  how to get $d'$. We may choose an element $a\in \mm$ and find a minimal $c\in \mathbb N$ such that $a^c\in (v(M))+(a^{2c}) $ (this is possible because $\dim A=1$). Set $d'=a^c$. It follows that $d'\in (v(M))+ (d'^2)\subset  (v(M))+ (d'^4)\subset \ldots $ and so $d'\in (v(M))$, that is $d'=v(M)z$ for some $z\in A'$. Certainly we cannot find precisely $z$, but later it is enough  to know just a kind of truncation of it modulo $d'^6$.

Thus we may suppose that there exist  $f=(f_1,\ldots,f_r)$, $r\leq n$ a system of polynomials from $I$, a $r\times r$-minor $M$ of the Jacobian matrix $(\partial f_i/\partial Y_j)$, $N\in ((f):I)$ such that $0\not = d\equiv P=MN\ \mbox{modulo}\ I$.  We may assume that $M=\det((\partial f_i/\partial Y_j)_{i,j\in [r]})$. Set   $\bar A=A/(d^3)$, $\bar A'=A'/d^3A'$, $\bar u=\bar A\otimes_Au$, $\bar B=B/d^3B$, $\bar v=\bar A\otimes_Av$. Clearly, $\bar u$ is a regular morphism of Artinian  local rings and it is easy to find a General Neron Desingularization in this frame. Thus there exists a $\bar B$-algebra $C$, which is smooth over $\bar A$  such that $\bar v$ factors through $C$. Moreover we may suppose that $C\cong  (\bar A[U]/(\omega))_{\tau}$ for some polynomials $\omega\tau\in k[U]$ which are not in $m (\bar A[U]/(\omega))$ (note that $k\subset A$).
Then $D\cong  ( A[U]/(\omega))_{\tau}$ is smooth over $A$ and $u$ factors through $D$. Usually, $v$ does not factor through $D$, though $\bar v$ factors through
$C\cong {\bar A}\otimes_AD$.

 Let  $y'\in D^n$ be such that the composite map $\bar B\to C\to \bar D$ is given by $Y\to y'+d^3D$. Thus $I(y')\equiv 0$ modulo $d^3D$.
We have $d\equiv P$ modulo $I$ and so $P(y')\equiv d$ modulo $d^3$ . Thus $P(y')=ds$ for a certain $s\in D$ with $s\equiv 1$ modulo $d$.
 Let  $H$ be the $n\times n$-matrix obtained by  adding down to $(\partial f/\partial Y)$ as a border the block $(0|\mbox{Id}_{n-r})$. Let $G'$ be the adjoint matrix of $H$ and $G=NG'$. We have
$$GH=HG=NM \mbox{Id}_n=P\mbox{Id}_n$$
and so
$$ds\mbox{Id}_n=P(y')\mbox{Id}_n=G(y')H(y').$$
 Set
 $h=s(Y-y')-dG(y')T$,
 where  $T=(T_1,\ldots,T_n)$ are new variables. Since
$$s(Y-y')\equiv dG(y')T\ \mbox{modulo}\ h$$
and
$$f(Y)-f(y')\equiv \sum_j\fracs{\partial f}{\partial Y_j}(y') (Y_j-y'_j)$$
modulo higher order terms in $Y_j-y'_j$, by Taylor's formula we see that for $p=\max_i \deg f_i$ we have
$$s^pf(Y)-s^pf(y')\equiv  s^{p-1}dP(y')T+d^2Q$$
modulo $h$, where $Q\in T^2 D[T]^r$. This is because $(\partial f/\partial Y)G=(P\mbox{Id}_r|0)$.  We have $f(y')=d^2b$ for some $b\in dD^r$. Set
$g_i=s^pb_i+s^pT_i+Q_i, \  i\in [r]$. Then we may take $B'$ to be a localization of $(D[Y,T]/(I,h,g))_s$.

\begin{Remark} An algorithmic proof in the frame of all Noetherian local rings of dimension one is given in  \cite{PP1}.
\end{Remark}

\end{document}